\lstdefinelanguage{GAP}{%
  morekeywords={%
    Assert,Info,IsBound,QUIT,%
    TryNextMethod,Unbind,and,break,%
    continue,do,elif,%
    else,end,false,fi,for,%
    function,if,in,local,%
    mod,not,od,or,%
    quit,rec,repeat,return,%
    then,true,until,while%
  },%
  sensitive,%
  morecomment=[l]\#,%
  morestring=[b]",%
  morestring=[b]',%
}[keywords,comments,strings]
\newtheorem{thm}{Theorem}[section]
\newtheorem{cor}[thm]{Corollary}
\newtheorem{deff}[thm]{Definition}
\newtheorem{lem}[thm]{Lemma}
\newtheorem{prop}[thm]{Proposition}
\newtheorem{example}[thm]{Example}
\newcommand{\abs}[1]{\left\vert#1\right\vert}
\newcommand{\cc}[1]{\textcolor{red}{{#1}}}
\newcommand{\qq}[1]{\textcolor{magenta}{{#1}}}
\newcommand{\vv}[1]{\textcolor{cyan}{{#1}}}
\def\pv#1{\ensuremath{\mathsf{#1}}}
\newcommand{\J}{\mathrel{\mathscr J}} % J - relation
\newcommand{\R}{\mathrel{\mathscr R}} % R - relation
\newcommand{\eL}{\mathrel{\mathscr L}} % L - relation
\newcommand{\HH}{\mathrel{\mathscr H}}
\newcommand{\DDet}{\mathop{\mathrm{Det}}\nolimits}
\newcommand{\mylabel}[2]{#2\def\@currentlabel{#2}\label{#1}}
\begin{document}
\title[The determinant of finite semigroups not in \pv{ECom}]{A step to compute the determinant of finite semigroups not in \pv{ECom}}
\author{M.H. Shahzamanian}
\address{M.H. Shahzamanian\\ CMUP, Departamento de Matemática, Faculdade de Ciências, Universidade do Porto, Rua do Campo Alegre s/n, 4169--007 Porto (Portugal).}
\email{m.h.shahzamanian@fc.up.pt}
\thanks{Mathematics Subject Classification 2020: 20M25, 16L60, 16S36.\\
Keywords and phrases: Frobenius algebra; semigroup determinant; paratrophic determinant; semigroup algebra.}

\begin{abstract}
The purpose of this paper is to begin studying the computation of the nonzero determinant of semigroups within the class of finite semigroups that possesses a pair of non-commutative idempotents. 
This paper focuses on a class of these semigroups introduced as $\ll$-smooth semigroups.
This computation is applicable in the context of the extension of the MacWilliams theorem for codes over semigroup algebras.
\end{abstract}
\maketitle
%\tableofcontents

%%%%%%%%%%%%%%%%%%%%%%%%%%%%%%%%%%%%%%%%%%%%%%%%%%%%%%%%%%%%%%%%%%%%%%%%%%%%%%%%%%%%%%%%%%%%%%%%%%%%%%%%%%%%%%%%%%%%%%%%%%%%%%%%%%%%%%%%%%%%%%
%%%%%%%%%%%%%%%%%%%%%%%%%%%%%%%%%%%%%%%%%%%%%%%%%%%%%%%%%%%%%%%%%%%%%%%%%%%%%%%%%%%%%%%%%%%%%%%%%%%%%%%%%%%%%%%%%%%%%%%%%%%%%%%%%%%%%%%%%%%%%%

\section{Introduction}

In the 1880s, Dedekind introduced the concept of the group determinant of finite groups and with Frobenius, began to study it in depth. At the same time, Smith also examined this concept, but in a different way, as outlined in \cite{Smith}. This study involved the investigation of the determinant of a $G \times G$ matrix, where the entry at the $(g, h)$ position is $x_{gh}$, with $G$ being a finite group and the $x_k$ are variables, for all
$k$ in $G$. Additionally, the study has been expanded to include finite semigroups with various research objectives \cite{Lindstr, Wilf, Wood}.
An application of the semigroup determinant for finite semigroups is the extension of the MacWilliams theorem for codes over a finite field to chain rings. 
Linear codes over a finite Frobenius ring have the extension property (see \cite{Wood-Duality}).
The nonzero semigroup determinant is an essential component in this application.
It is only nonzero when $\mathbb{C}S$ is a Frobenius algebra, which also means that it is unital.
This fact is demonstrated by Theorem 2.1 in \cite{Ste-Fac-det} or Proposition 18 in Chapter 16 of \cite{Okn}. 

In the paper~\cite{Ste-Fac-det} by Steinberg, he provides a factorization of the semigroup determinant of commutative semigroups.
The semigroup determinant is either zero or it factors into linear polynomials. Steinberg describes the factors and their multiplicities explicitly.
This work was a continuation of previous studies on commutative semigroups with Frobenius semigroup algebras by Ponizovski\u{\i}~\cite{Ponizovski} and Wenger~\cite{Wenger}.
Steinberg also showed that the semigroup determinant of an inverse semigroup can be computed as the semigroup determinant of a finite groupoid. 

In~\cite{Sha-Det}, the determinant of a semigroup within the pseudovariety \pv{ECom} is explored to understand the conditions under which the determinant of a semigroup in the pseudovariety $\pv{ECom}$ is nonzero and to study its factorization.
This exploration is essentially an extension of the ideas presented in Steinberg’s paper~\cite{Ste-Fac-det}.
In~\cite{Ste-Fac-det}, the determinant of semigroups with central idempotents has been examined for the purpose of providing a factorization of commutative semigroups. %Also in~\cite{Ste-Fac-det}, the nonzero determinant can be recognized through Frobenius algebra.
The pseudovariety $\pv{ECom}$ is, by a celebrated result of Ash, precisely the pseudovariety generated by finite inverse semigroups.
This is a larger class than that of the semigroups with central idempotents and also of inverse semigroups discussed in~\cite{Ste-Fac-det}. 
Then, this fact makes it a natural object of study.

In this paper, we take one step further and investigate the determinant of semigroups 
beyond the pseudovariety \pv{ECom},
finite semigroups possessing a pair of non-commutative idempotents, aiming to understand the conditions under which the determinant of a semigroup is nonzero and to study its factorization. Our study is limited to a class of semigroups not in \pv{ECom} that satisfy certain conditions. This work marks the beginning of the investigation into these semigroups, and we hope it will be helpful for continuing this line of research.

We defines a partial order relation for the class of finite semigroups whose semigroup algebras over the complex numbers are unital algebra. This relation extends the natural partial ordering of the idempotents within the semigroup. This partial order relation is crucial for examining the determinant of these semigroups.
Although the partial order could be non-transitive, in this paper, we limit our work to finite semigroups for which this partial order is transitive. 
Additionally, we classify these semigroups under this partial order and focus on a class of these semigroups called $\ll$-smooth semigroups.
We then identify semigroups in this class with
a non-zero determinant, studying their factorizations. Our identification is
more specific for this class of semigroups.

The paper is organized as follows. We begin with a preliminary section on semigroups and determinant of a semigroup. 
Next, we present a partial order relation on the finite semigroups and investigate their properties.
We then proceed to compute the determinant of $\ll$-smooth semigroups. 
To demonstrate the method, several examples are provided, and their calculations are performed using programs developed in C{\fontseries{b}\selectfont\#}. These examples are discussed in an appendix at the end of the paper.

%%%%%%%%%%%%%%%%%%%%%%%%%%%%%%%%%%%%%%%%%%%%%%%%%%%%%%%%%%%%%%%%%%%%%%%%%%%%%%%%%%%%%%%%%%%%%%%%%%%%%%%%%%%%%%%%%%%%%%%%%%%%%%%%%%%%%%%%%%%%%%
%%%%%%%%%%%%%%%%%%%%%%%%%%%%%%%%%%%%%%%%%%%%%%%%%%%%%%%%%%%%%%%%%%%%%%%%%%%%%%%%%%%%%%%%%%%%%%%%%%%%%%%%%%%%%%%%%%%%%%%%%%%%%%%%%%%%%%%%%%%%%%

\section{Preliminaries}

\subsection{Semigroups}

For standard notation and terminology relating to semigroups, we refer the reader to~\cite[Chap. 5]{Alm}, \cite[Chaps. 1-3]{Cli-Pre} and~\cite[Appendix A]{Rho-Ste}.

Let $S$ be a finite semigroup. 
Let $a,b\in S$. 
We say that $a\R b$ if $aS^{1} = bS^{1}$, $a\eL b$ if $S^{1}a = S^{1}b$ and $a\HH b$ if $a\R b$ and $a\eL b$. 
Also, we say that $a\J b$, if $S^{1}aS^{1} = S^{1}bS^{1}$. 
Please observe that the symbol 1 in notation $S^1$ does not denote any specific element of $S$. 
If $S$ possesses an identity element, then $S^1= S$. However, if $S$ lacks an identity element, $S^1 =S \cup 1$, forming a semigroup with 1 as its identity element.
Similarly, we can extend this definition to subsets by defining $T^1=T\cup\{1\}$ for any subset $T$ of $S$ where 1 is the identity of $S^1$.
The relations $\R,\eL$, $\HH$ and $\J$ are Green’s relations, named after Green~\cite{Gre}.
We call $R_a,L_a,H_a$ and $J_a$, respectively, the $\R,\eL,\HH$ and $\J$-class containing $a$.
%An important property of finite monoids is the stability property that $J_m\cap Mm = L_m$ and $J_m\cap mM = R_m$, for every $m \in M$. %%\cite[Definition A.2.1]{Rho-Ste}.
%For $\J$-classes $J_a$ and $J_b$, we can define the partial order $\leq$ as follows:
%$$MaM\subseteq MbM\ \text{if and only if}\ J_a\leq J_b.$$
%A finite semigroup is aperiodic if and only if its $\HH$-relation is trivial.
Also, we have $a \widetilde{\eL} b$ if and only if $a$ and $b$ have the same set of idempotent right identities, 
that is, $ae = a$ if and only if $be = b$ in the sense Fountain et al. \cite{Fou-Gom-Gou}. The relation $\widetilde{\R}$ is defined dually, 
and $\widetilde{\HH}=\widetilde{\eL}\wedge\widetilde{\R}$.
We write $\widetilde{L}_s$, $\widetilde{R}_s$ and $\widetilde{H}_s$ for the equivalence classes of $s$ of these relations, respectively.
For further results regarding this object see \cite{Lawson-Sem-Cat}.

An element $e$ of $S$ is called \emph{idempotent} if $e^2 = e$. 
The set of all idempotents of $S$ is denoted by $E(S)$. %more generally, for any $X\subseteq M$, we put $E(X)=X\cap E(M)$.
An idempotent $e$ of $S$ is the identity of the monoid $eSe$. 
The group of units $G_e$ of $eSe$ is called the maximal subgroup of $S$ at $e$. %Note that $G_e=H_e$.

A \emph{left ideal} of a semigroup $S$ is a nonempty subset $A$ of $S$ such that $SA \subseteq A$. 
A \emph{right ideal} of $S$ is defined dually, with the condition $AS \subseteq A$. 
An \emph{ideal} of $S$ is a subset of $S$ that is both a left and a right ideal.
Every finite semigroup $S$ has one minimal ideal that is called the \emph{kernel} of $S$.
%An element $s$ of $S$ is called (von Neumann) \emph{regular} if there exists an element $t\in S$ such that $sts=s$. 
%Note that an element $s$ is regular if and only if $s\eL e$, for some $e\in E(S)$, if and only if $s\R f$, for some $f\in E(S)$. 
%The semigroup $S$ is \emph{regular} if every element in $S$ is regular.
%A $\J$-class $J$ is regular if all its elements are regular, if and only if $J$ has an idempotent, if and only if $J^2\cap J\neq\emptyset$. 
The semigroup $S$ is \emph{inverse} if, for all $s \in S$, there is a unique element $s^{-1} \in S$ such that $ss^{-1}s =s$ and $s^{-1}ss^{-1}=s^{-1}$.
For an element $s\in S$, $s^{\omega}$ is the limit of the sequence $(s^{n!})_n$. 

A \emph{pseudovariety} of semigroups is a class of finite semigroups that is closed under taking subsemigroups, homomorphic images, and finite direct products. 
The pseudovariety $\pv{S}$ consists of all finite semigroups, while the pseudovariety $\pv{G}$ is the class of all finite groups, $\pv{Sl}$ and $\pv{Com}$ are the pseudovarieties of all finite, respectively, semilattices and commutative semigroups.
The operator $\pv{E}$ associates a pseudovariety $\pv{V}$ to the class of finite semigroups such that the subsemigroup generated by the idempotents of the semigroup belongs to $\pv{V}$, which can be written as
$$\mathsf{EV} = \{S \in \mathsf{S} \mid \langle E(S)\rangle \in \mathsf{V}\}.$$
%That means if a finite semigroup $S$ satisfies the property that for all $a,b\in E(S)$, we have $ab=ba$, then $S$ belongs to the class $\mathsf{ECom}$.
If a finite semigroup $S$ is a member of $\mathsf{ECom}$, then the subsemigroup generated by the idempotents of $S$ is equal to the set of idempotents of $S$. Therefore, the pseudovariety $\pv{ESl}$ is equal to the pseudovariety $\pv{ECom}$. By a celebrated result of Ash~\cite{Ash}, the pseudovariety generated by finite inverse semigroups is precisely the pseudovariety $\mathsf{ECom}$.
%the pseudovariety of finite semigroups whose idempotents commute.

Let $G$ be a group, $n$ and $m$ be integers and $P=(p_{ji})$ be an $m\times n$ matrix with entries in $G\cup\{0\}$.
The Rees matrix semigroup $\mathcal{M}^{0}(G, n,m;P)$ is the set of all triples $(i,g,j)$ where $g\in G$, $1\leq i \leq n$ and $1\leq j\leq m$, together with  $0$, and the following binary operation between nonzero elements
\begin{equation*}
(i,g,j)(i',g',j')  = \begin{cases}
  (i,gp_{ji'}g',j')& \text{if}\ p_{ji'}\neq 0;\\
  0& \text{otherwise},
\end{cases}
\end{equation*}
for every $(i,g,j),(i',g',j')\in \mathcal{M}^{0}(G,n,m;P)$. 
%\cite[Definition A.4.10]{Rho-Ste}. 
The Rees matrix semigroup $\mathcal{M}^{0}(G, n,m;P)$ is regular if and only if each row and each column of $P$ contains a nonzero entry, in which case all nonzero elements are $\J$-equivalent. 
%\cite[Proposition A.4.12]{Rho-Ste}.
We denote by $\mathcal{B}_n(G)$ an $n\times n$ Brandt semigroup over a group $G$. 
Note that $\mathcal{B}_n(G)$ is an inverse completely $0$-simple semigroup.

For a semigroup $S$, a principal series of $S$ is a chain of ideals of $S$
\[S= S_1 \supsetneqq S_2 \supsetneqq \cdots \supsetneqq S_{n} \supsetneqq S_{n+1} = \emptyset\]
such that there is no ideal of $S$ strictly
between $S_i$ and $S_{i+1}$ (for convenience we call the empty set
an ideal of $S$). Each principal factor $S_i / S_{i+1} (1 \leq i
\leq m)$ of $S$ is either completely $0$-simple, completely simple
or null.
Every completely $0$-simple factor is isomorphic with a regular Rees matrix semigroup over a finite group $G$.
Every finite semigroup has a principal series.

%A semigroup $S$ is called nilpotent if it has a zero element and there exists a positive integer $n$ such that the product of any $n$ elements is zero.

\subsection{Incidence Algebras and M{\"o}bius Functions}

Let $(P,\leq)$ be a finite partially ordered set (poset). 
The \emph{incidence algebra} of $P$ over $\mathbb{C}$, which we denote $\mathbb{C}\llbracket P\rrbracket$, is the algebra of all functions $f\colon P \times P \rightarrow \mathbb{C}$ such that
$$f(x, y) \neq 0 \Rightarrow x \leq y$$
equipped with the convolution product
$$(f \ast g)(x, y) = \sum\limits_{x\leq z\leq y}f(x, z)g(z, y).$$
The \emph{convolution identity} is the delta function $\delta$ given by
\begin{equation*}
\delta(x,y)=
\begin{cases}
1 & \text{if}\ x=y\\
0 & \text{otherwise.}
\end{cases}
\end{equation*}
The \emph{zeta function}, denoted as $\zeta_P$, of the poset $P$ is an element of $\mathbb{C}\llbracket P\rrbracket$ 
given by
\begin{equation*}
\zeta_P(x,y)=
\begin{cases}
1 & \text{if}\ x \leq y\\
0 & \text{otherwise.}
\end{cases}
\end{equation*}
The function $\zeta_P$ is upper triangular with ones on the diagonal with respect to any linear order extending $P$. % and hence is unimodular. 
Therefore, $\zeta_P$ has an inverse over the integers called the \emph{M{\"o}bius function}, represented by $\mu_P$. 
In instances where the poset $P$ is clear from context, the subscript $P$ will be omitted.

Let $f$ be a function from $P$ to $\mathbb{C}$. 
By Applying M\"{o}bius inversion, 
if $g$ is the function from $P$ to $\mathbb{C}$ given by $g(x) = \sum\limits_{y\leq x}f(y)$ then $f(x)= \sum\limits_{y \leq x} \mu_P(y,x)g(y)$, for every $x \in P$.

We recommend that the reader refer to \cite{IncidenceAlgebras} for further information on this section.

\subsection{Determinant of a semigroup}
For standard notation and terminology relating to finite dimensional algebras, the reader is referred to \cite{Assem-Ibrahim, Benson}.

A based algebra is a finite dimensional complex algebra $A$ with a distinguished basis $B$.
We often refer to the pair of the algebra and its basis as $(A, B)$.
The multiplication in the algebra is determined by its structure constants with respect to the basis $B$ defined by the equations
$$bb'\ =\sum\limits_{b''\in B}c_{b'',b,b'}b''$$
where $b, b'\in B$ and $c_{b'',b,b'}\in \mathbb{C}$.
Let $X_B=\{x_b\mid b \in B\}$ be a set of variables in bijection with $B$. 
These structure constants can be represented in a matrix called the Cayley table, which is a $B \times B$ matrix with elements from the polynomial ring $\mathbb{C}[X_B]$.
It is defined as a $B\times B$ matrix over $\mathbb{C}[X_B]$ with entries given by
$$C(A,B)_{b,b'} =\sum\limits_{b''\in B}c_{b'',b,b'}x_{b''}.$$
The determinant of this matrix, denoted by $\theta_{(A,B)}(X_B)$, is either identically zero or a homogeneous polynomial of degree $\abs{B}$.

Let $S$ be a finite semigroup.
The semigroup $\mathbb{C}$-algebra $\mathbb{C}S$ consists of all the formal sums $\sum\limits_{s\in S} \lambda_s s$, where $\lambda_s \in \mathbb{C}$ and
$s \in S$, with the multiplication defined by the formula
$$(\sum\limits_{s\in S} \lambda_s s)\cdot (\sum\limits_{t\in S} \mu_t t) =\sum\limits_{u=st\in S} \lambda_s\mu_t u.$$
Note that $\mathbb{C}S$ is a finite dimensional $\mathbb{C}$-algebra with basis $S$.
If $A =\mathbb{C}S$ and $B=S$, then the Cayley table of $C(S)=C(\mathbb{C}S,S)$ is the $S\times S$ matrix over $\mathbb{C}[X_S]$ with
$C(S)_{s,s'} =x_{ss'}$ where $X_S=\{x_s\mid s \in S\}$ is a set of variables in bijection with $S$.
We denote the determinant $\DDet C(\mathbb{C}S,S)$ by $\theta_S(X_S)$
and call it the (Dedekind-Frobenius) \emph{semigroup determinant} of $S$.
If the semigroup $S$ is fixed, we often write $X$ instead of $X_S$.
For more information on this topic, the reader is referred to \cite{Frobenius1903theorie}, \cite[Chapter~16]{Okn} and \cite{Ste-Fac-det}.

The \emph{contracted semigroup algebra} of a semigroup $S$ with a zero element 0 on the complex numbers is defined as $\mathbb{C}_0S=\mathbb{C}S/\mathbb{C}0$; note that $\mathbb{C}0$ is a one-dimensional two-sided ideal. 
This algebra can be thought of as having a basis consisting of the nonzero elements of $S$ and having multiplication that extends that of $S$, but with the zero of the semigroup being identified with the zero of the algebra.
The contracted semigroup determinant of $S$, denoted by $\widetilde{\theta}_S$, is the determinant of $\widetilde{C}(S) =C(\mathbb{C}_0S, S\setminus\{0\})$, where $\widetilde{C}(S)_{s,t}$ is equal to $x_{st}$ if $st\neq 0$ and 0 otherwise. Let $\widetilde{X}=X_{S\setminus \{0\}}$ if $S$ is understood.

%Steinberg in~\cite{Ste-Fac-det} provides a theorem regarding the behavior of the paratrophic determinant under isomorphism (or change of basis). 
%We mention it as follows.
%
%\begin{thm}\label{ThetaCal}
%Let $(A, B)$ and $(A', B')$ be based algebras and let $f\colon A \rightarrow A'$ be a $\mathbb{C}$-algebra homomorphism. 
%Let $P$ be the $B'\times B$ matrix of $f$ with respect to the bases $B$ and $B'$. 
%Let $\widetilde{f}\colon \mathbb{C}[X_B] \rightarrow \mathbb{C}[X_{B'}]$ be the homomorphism $x_b \mapsto \sum\limits_{b' \in B'}P_{b',b}x_{b'}$ induced by $f$ (note that $\widetilde{f}(x_b)$ is a linear homogeneous polynomial). 
%Then $\widetilde{f}(C(A, B)) =P^TC(A', B')P$. 
%Therefore, if $f$ is an isomorphism, $\theta_{(A,B)}=\det(P)^2\widetilde{f}^{-1}(\theta_{(A',B')})$.
%\end{thm}

According to Proposition 2.7 in~\cite{Ste-Fac-det} (the idea mentioned in~\cite{Wood}), there is a connection between the contracted semigroup determinant and the semigroup determinant of a semigroup $S$ with a zero element.
There is a $\mathbb{C}$-algebra isomorphism 
between the $\mathbb{C}$-algebra $\mathbb{C}S$ and the product algebra $\mathbb{C}_0S\times \mathbb{C}0$, which sends $s \in S$ to $(s,0)$.
Put $y_s=x_s-x_0$ for $s \neq 0$ and let $Y=\{y_s\mid s \in S\setminus \{0\}\}$. 
Then $\theta_S(X) =x_0\widetilde{\theta}_S(Y)$. 
Therefore, $\widetilde{\theta}_S(\widetilde{X})$ can be obtained from $\theta_S(X)/ x_0$ by replacing $x_0$ with 0.

%%%%%%%%%%%%%%%%%%%%%%%%%%%%%%%%%%%%%%%%%%%%%%%%%%%%%%%%%%%%%%%%%%%%%%%%%%%%%%%%%%%%%%%%%%%%%%%%%%%%%%%%%%%%%%%%%%%%%%%%%%%%%%%%%%%%%%%%%%%%%%
%%%%%%%%%%%%%%%%%%%%%%%%%%%%%%%%%%%%%%%%%%%%%%%%%%%%%%%%%%%%%%%%%%%%%%%%%%%%%%%%%%%%%%%%%%%%%%%%%%%%%%%%%%%%%%%%%%%%%%%%%%%%%%%%%%%%%%%%%%%%%%
%%%%%%%%%%%%%%%%%%%%%%%%%%%%%%%%%%%%%%%%%%%%%%%%%%%%%%%%%%%%%%%%%%%%%%%%%%%%%%%%%%%%%%%%%%%%%%%%%%%%%%%%%%%%%%%%%%%%%%%%%%%%%%%%%%%%%%%%%%%%%%

\section{Relations \(\ll\) and \(\lll\)}\label{llAndlll}

Necessary conditions for a semigroup $S$ to have a nonzero $\theta_S(X)$ are stated in \cite{Ste-Fac-det}.
If $\theta_S(X)$ is not equal to 0, then the semigroup algebra $\mathbb{C}S$ is a unital algebra, according to Theorem 2.1 in~\cite{Ste-Fac-det}.

We define the functions $\varphi^{\ast}$ and $\varphi^{+}$ from $S$ to the power set of $E(S)$ % denoted by $\mathcal{P}(E(S))$, 
as follows:
\[\varphi^{\ast}(s)=\{e\in E(S)\mid se = s\}\ \text{and} \ \varphi^{+}(s)=\{e\in E(S)\mid es = s\}.\]
If $S$ is finite and $\mathbb{C}S$ is a unital algebra, then the subsets $\varphi^{\ast}(s)$ and $\varphi^{+}(s)$ are nonempty, for every $s\in S$ (\cite[Lemma 3.1]{Sha-Det}).

In~\cite{Sha-Det}, the determinant of a semigroup within the pseudovariety \pv{ECom} is explored.
In this paper, we take one step further and begin investigating the determinant of a semigroup not within the pseudovariety \pv{ECom}.
Throughout the paper, we consider a finite semigroup 
$S$ with the assumption that the semigroup algebra $\mathbb{C}S$ is a unital algebra.

Let $s\in S$. Since $\mathbb{C}S$ is a unital algebra,
the subsets $\varphi^{\ast}(s)$ and $\varphi^{+}(s)$ are nonempty.
We denote the kernel of $\langle \varphi^{\ast}(s) \rangle$ and $\langle \varphi^{+}(s) \rangle$ by $s^{\ast\ast}$ and $s^{++}$, respectively.

%It is easy to verify that if $x\in \langle \varphi^{\ast}(s) \rangle$, then $sx=s$.
Note that $s^{\ast\ast} = t^{\ast\ast}$ if and only if $\varphi^{\ast}(s)=\varphi^{\ast}(t)$. 
Indeed, suppose that $s^{\ast\ast} = t^{\ast\ast}$ and $e\in \varphi^{\ast}(s)$. 
Hence, we have $se=s$. 
Let $f \in t^{\ast\ast}$. It is easy to verify that $tf=t$.
Then, we have $te=(tf)e=t(fe)$.
Since $f \in s^{\ast\ast} (= t^{\ast\ast})$ and $e\in \varphi^{\ast}(s)$,
we have $f'=fe \in s^{\ast\ast}=t^{\ast\ast}$.
Hence $te=tf'=t$. 
Then, $e\in \varphi^{\ast}(t)$ and, thus, we have $\varphi^{\ast}(s)=\varphi^{\ast}(t)$. 
Also, we have $s^{++} = t^{++}$ if and only if $\varphi^{+}(s)=\varphi^{+}(t)$. 
Then, the equivalence relations $\widetilde{\eL}$, $\widetilde{\R}$
and $\widetilde{\HH}$ can be described as follows:
% Also $a \widetilde{\eL} b$ if and only if $a$ and $b$ have the same set of idempotent right identities, 
%that is, ae = a$ if and only if $be = b$.
\begin{enumerate}
\item $s \widetilde{\eL} t$ if $s^{\ast\ast} = t^{\ast\ast}$;
\item $s \widetilde{\R} t$ if $s^{++} = t^{++}$;
\item $s \widetilde{\HH} t$ if $s^{\ast\ast} = t^{\ast\ast}$ and $s^{++} = t^{++}$.
\end{enumerate}

It is clear that if $e$ is an idempotent then $e \in e^{\ast\ast}, e^{++}$.
%Also, if $e \in \widetilde{L}_s$, for some $s\in S$, then $e\in s^{\ast\ast}$. 

Let $s$ and $t$ be elements of $S$. We define $s \ll t$ if \[s = s^{++}ts^{\ast\ast}.\]

We say that a semigroup $S$ is singleton-rich
if the cardinality of subsets $s^{\ast\ast}$ and $s^{++}$ is equal to one, for every $s$ in $S$.
In this case, we denote the single element of the subsets $s^{\ast\ast}$ and $s^{++}$ by $s^{\ast}$ and $s^{+}$, respectively.
Note that, it is easy to verify that $s^{\ast}$ and $s^{+}$ are idempotent. 
In this paper, we assume that all semigroups are singleton-rich.

\begin{lem}\label{eslls}
The following statements hold:
\begin{enumerate}
\item
Let $s\in S$ and $e, f\in E(S)$.
We have $es, se, esf \ll s$.
\item Let $s_1,s_2\in S$. Then, we have $(s_1s_2)^{\ast} \leq s_2^{\ast}$ and $(s_1s_2)^{+} \leq s_1^{+}$.
\end{enumerate}

\end{lem}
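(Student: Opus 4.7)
The plan hinges on a single absorption property extracted from the singleton-rich hypothesis. Since $s^{\ast\ast}=\{s^{\ast}\}$ is the kernel of the subsemigroup $\langle\varphi^{\ast}(s)\rangle$ and a kernel is a two-sided ideal, the idempotent $s^{\ast}$ is absorbed by every element of $\langle\varphi^{\ast}(s)\rangle$ from both sides; in particular, $g\cdot s^{\ast}=s^{\ast}\cdot g=s^{\ast}$ for every $g\in\varphi^{\ast}(s)$. The dual statement holds for $s^{+}$ and $\varphi^{+}(s)$. Combined with the defining identities $s\cdot s^{\ast}=s$ and $s^{+}\cdot s=s$, these are the only facts I will use.

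For part (1), it suffices to treat the case $esf\ll s$: the case $es\ll s$ is recovered by setting $f:=s^{\ast}\in E(S)$ (so that $e\cdot s\cdot s^{\ast}=es$), and $se\ll s$ by setting $e:=s^{+}$. To verify $esf=(esf)^{+}\cdot s\cdot(esf)^{\ast}$, I first check the two membership facts $e\in\varphi^{+}(esf)$ (because $e\cdot esf=e^{2}sf=esf$) and $f\in\varphi^{\ast}(esf)$ (because $esf\cdot f=esf^{2}=esf$). The absorption property then gives $(esf)^{+}\cdot e=(esf)^{+}$ and $f\cdot(esf)^{\ast}=(esf)^{\ast}$, so that
\[
(esf)^{+}\cdot s\cdot(esf)^{\ast}\;=\;(esf)^{+}\cdot e\cdot s\cdot f\cdot(esf)^{\ast}\;=\;(esf)^{+}\cdot (esf)\cdot(esf)^{\ast}\;=\;esf,
\]
the last equality using $(esf)^{+}\in\varphi^{+}(esf)$ and $(esf)^{\ast}\in\varphi^{\ast}(esf)$.

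For part (2), recall that the natural order on idempotents reads $e\leq f$ iff $ef=fe=e$. From $s_{2}\cdot s_{2}^{\ast}=s_{2}$ one gets $(s_{1}s_{2})\cdot s_{2}^{\ast}=s_{1}s_{2}$, hence $s_{2}^{\ast}\in\varphi^{\ast}(s_{1}s_{2})$. Applying the absorption property to the singleton kernel $\{(s_{1}s_{2})^{\ast}\}$ inside $\langle\varphi^{\ast}(s_{1}s_{2})\rangle$ yields $(s_{1}s_{2})^{\ast}\cdot s_{2}^{\ast}=s_{2}^{\ast}\cdot(s_{1}s_{2})^{\ast}=(s_{1}s_{2})^{\ast}$, which is exactly $(s_{1}s_{2})^{\ast}\leq s_{2}^{\ast}$. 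The bound $(s_{1}s_{2})^{+}\leq s_{1}^{+}$ is the strictly left-right dual statement, starting from $s_{1}^{+}\cdot s_{1}=s_{1}$.

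The main obstacle is conceptual rather than computational: one must notice that the kernel of $\langle\varphi^{\ast}(s)\rangle$, being a two-sided ideal that is also a singleton, forces $s^{\ast}$ to act as a two-sided zero on the entire generating set $\varphi^{\ast}(s)$ and not merely on $s$ itself. Once this absorption principle is isolated, both parts collapse to a short substitution, and the only bookkeeping required is the routine verification that the idempotents $e$ and $f$ indeed lie in the appropriate $\varphi^{\pm}$-set of the composite element under consideration.
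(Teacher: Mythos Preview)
Your proof is correct and follows essentially the same approach as the paper: both arguments hinge on the observation that if $g\in\varphi^{\ast}(u)$ (resp.\ $g\in\varphi^{+}(u)$) then the singleton kernel forces $gu^{\ast}=u^{\ast}g=u^{\ast}$ (resp.\ $gu^{+}=u^{+}g=u^{+}$), after which both parts reduce to the same one-line substitution. The only cosmetic difference is that you treat $esf$ as the master case and specialize, whereas the paper does $es$ first and says ``similarly'' for $se$ and $esf$.
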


\begin{proof}
(1) Since $e\in \varphi^{+}(es)$, we have $(es)^{+}e=(es)^{+}$.
Then, we deduce \[(es)^{+}s(es)^{\ast}=(es)^{+}es(es)^{\ast}=es.\]
It follows that $es \ll s$. 

Similarly, we have $se,esf \ll s$.
%
%\[
%esf = (esf)^{+}esf(esf)^{\ast} = (esf)^{+}s(esf)^{\ast}
%\]
%since \(e\in \varphi^{+}(esf), f\in \varphi^{\ast}(esf).\) Then, we have \(esf \ll s\).

(2) Since $s_1s_2s_2^{\ast}=s_1s_2$, we have $s_2^{\ast}\in \varphi^{\ast}(s_1s_2)$, and thus, we get that $(s_1s_2)^{\ast} \leq s_2^{\ast}$.

Similarly, we have $(s_1s_2)^{+} \leq s_1^{+}$.
\end{proof}

\begin{prop}\label{lllES}
%Let $s,t \in S$. 
%the subsets $s^{\ast\ast}$ and $s^{++}$ have only one element.
The following conditions hold:
\begin{enumerate}
\item $s \ll t$ if and only if $s \in E(S)^1tE(S)^1$.
\item there do not exist pairwise distinct elements $s_1,\ldots,s_n$ with $1 < n$ such that
\[s_1 \ll s_2 \ll \cdots \ll s_n \ll s_1.\]
\end{enumerate}
\end{prop}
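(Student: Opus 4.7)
Part (1) follows directly from Lemma~\ref{eslls}(1). If $s \ll t$ then $s = s^+ t s^*$ with $s^+, s^* \in E(S)$, so $s \in E(S) t E(S) \subseteq E(S)^1 t E(S)^1$. Conversely, if $s = etf$ with $e, f \in E(S)^1$, split into cases according to whether $e$ and $f$ equal $1$ or lie in $E(S)$: when both lie in $E(S)$, Lemma~\ref{eslls}(1) yields $etf \ll t$; when exactly one equals $1$, the same lemma yields $et \ll t$ or $tf \ll t$; and when $e = f = 1$, $s = t$ and $t \ll t$ holds trivially because $t = t^+ t t^*$.

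For part (2), suppose toward contradiction that $s_1 \ll s_2 \ll \cdots \ll s_n \ll s_1$ is a cycle of pairwise distinct elements with $n > 1$; all indices are taken cyclically modulo $n$. Iterating the defining relation $s_i = s_i^+ s_{i+1} s_i^*$ around the cycle yields
\[
s_i = E_i s_i F_i, \qquad E_i = s_i^+ s_{i+1}^+ \cdots s_{i-1}^+, \quad F_i = s_{i-1}^* s_{i-2}^* \cdots s_i^*,
\]
for each $i$, and iterating further shows $s_i = E_i^{\omega} s_i F_i^{\omega}$, so $E_i^{\omega} \in \varphi^+(s_i)$. Because $s_i^+$ is idempotent and is the leading factor of $E_i$, $s_i^+ E_i = E_i$ and thus $s_i^+ E_i^{\omega} = E_i^{\omega}$. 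On the other hand, singleton-richness makes $s_i^+$ the minimum of $\varphi^+(s_i)$ in the natural order of idempotents, so $s_i^+ \leq E_i^{\omega}$ and $s_i^+ E_i^{\omega} = s_i^+$; comparing the two computations gives $E_i^{\omega} = s_i^+$.

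Because $s_{i-1}^+$ is the idempotent tail of $E_i$, $E_i s_{i-1}^+ = E_i$, which iterates to $E_i^{\omega} s_{i-1}^+ = E_i^{\omega}$, so $s_i^+ s_{i-1}^+ = s_i^+$. In particular $s_i^+ \in S^1 s_{i-1}^+$, whence $S^1 s_i^+ \subseteq S^1 s_{i-1}^+$; cycling around closes this into the equalities $S^1 s_1^+ = \cdots = S^1 s_n^+$, so $s_1^+, \ldots, s_n^+$ all lie in a common $\eL$-class $L$. I claim $L$ contains only one idempotent. If $f \in L$ is idempotent, then $f \eL s_1^+$ yields $u, v \in S^1$ with $s_1^+ = u f$ and $f = v s_1^+$, so $s_1^+ f = u f^2 = s_1^+$ and $f s_1^+ = v (s_1^+)^2 = f$. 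Hence $f \in \varphi^*(s_1^+)$, and since $s_1^+$ is idempotent $(s_1^+)^* = s_1^+$; singleton-richness then forces $s_1^+ \leq f$ in the natural order, i.e.\ $f s_1^+ = s_1^+$, which combined with $f s_1^+ = f$ forces $f = s_1^+$.

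Consequently $s_i^+ = s_1^+$ for every $i$; call this common value $E$. A symmetric argument, applied to $F_i^{\omega} = s_i^*$ and the $\R$-class of $s_1^*$, yields $s_i^* = F$ for a common idempotent $F$. Then $s_i = E s_{i+1} F$ for every $i$, and substituting $s_2 = E s_3 F$ into $s_1 = E s_2 F$ gives $s_1 = E^2 s_3 F^2 = E s_3 F = s_2$, contradicting the pairwise distinctness. The main obstacle is the twofold use of singleton-richness: first to identify $E_i^{\omega}$ with the minimum idempotent $s_i^+$, and then to rule out multiple idempotents in the common $\eL$-class $L$.
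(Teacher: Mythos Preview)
Your proof is correct. Part~(1) is essentially the paper's argument, just packaged via the case split on $e,f\in E(S)$ versus $e,f=1$ rather than deriving $s^{+}e=s^{+}$ and $fs^{\ast}=s^{\ast}$ directly.

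For part~(2), both proofs begin the same way: unwind the cycle to obtain $s_i=E_i^{\omega}s_iF_i^{\omega}$ and use singleton-richness to identify $E_i^{\omega}=s_i^{+}$ (and dually). From there the paper proceeds by pure algebraic manipulation of the cyclic products $(s_{i-1}^{\ast}\cdots s_i^{\ast})^{\omega}=s_i^{\ast}$ to squeeze out the identities $s_2^{\ast}s_1^{\ast}=s_1^{\ast}$ and $s_1^{\ast}s_2^{\ast}=s_2^{\ast}$, and then concludes $s_1^{\ast}=s_2^{\ast}$. You instead extract from $E_i^{\omega}=s_i^{+}$ the single relation $s_i^{+}s_{i-1}^{+}=s_i^{+}$, chain these into $S^1s_1^{+}=\cdots=S^1s_n^{+}$, and then prove the structural lemma that in a singleton-rich semigroup an $\eL$-class contains at most one idempotent. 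This Green's-relation detour is a genuinely different route: it trades the paper's somewhat opaque index juggling for a reusable fact about $\eL$-classes, and it makes transparent exactly where singleton-richness enters a second time. The paper's approach, by contrast, stays closer to the raw identities and avoids invoking Green's relations at all.
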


\begin{proof}
(1) If $s \ll t$ then, clearly we have $s \in E(S)^1tE(S)^1$.

Now, we suppose that $s \in E(S)^1tE(S)^1$.
Then, there exists elements $e, f \in E(S)^1$ such that $s = etf$.
It is easy to verify that $s^{+}e \in \langle\varphi^{+}(s)\rangle$ and $fs^{\ast}\in \langle\varphi^{\ast}(s)\rangle$.
%\[s^{+}es=s^{+}eetf=s^{+}etf=s^{+}s=s\]
Since $s^{\ast\ast}$ and $s^{++}$ are the kernel of $\langle \varphi^{\ast}(s) \rangle$ and $\langle \varphi^{+}(s) \rangle$, respectively, and  
$s^{\ast\ast}$ and $s^{++}$ have only one element, 
we have $s^{+}e=s^{+}$ and $fs^{\ast}=s^{\ast}$.
Therefore, we have $s = s^{+}ts^{\ast}$.

(2)
We assume the contrary that there exist pairwise distinct elements $s_1, \ldots, s_n$ with $1 < n$ such that
\[s_1 \ll s_2 \ll \cdots \ll s_n \ll s_1.\]
We have $s_i = s_i^{+}s_{i+1}s_i^{\ast}$, for $1\leq i< n$, and $s_n = s_n^{+}s_1s_n^{\ast}$.
Then, we get that 
\[s_1 = s_1^{+} s_2^{+} \cdots s_n^{+} s_1 s_n^{\ast} \cdots s_2^{\ast} s_1^{\ast}. \]
It follows that
\[s_1 = (s_1^{+} s_2^{+} \cdots s_n^{+})^{\omega} s_1 (s_n^{\ast} \cdots s_2^{\ast} s_1^{\ast})^{\omega}\]
and, thus, we have \(s_1(s_n^{\ast} \cdots s_2^{\ast} s_1^{\ast})^{\omega} =s_1.\)
Hence, we get that $(s_n^{\ast} \cdots s_2^{\ast} s_1^{\ast})^{\omega}\in \varphi^{\ast}(s_1)$.
As \[(s_n^{\ast} \cdots s_2^{\ast} s_1^{\ast})^{\omega}s_1^{\ast}=(s_n^{\ast} \cdots s_2^{\ast} s_1^{\ast})^{\omega}\] and $s_1^{\ast\ast}$ is the kernel of $\langle \varphi^{\ast}(s_1) \rangle$, 
we have \[(s_n^{\ast} \cdots s_2^{\ast} s_1^{\ast})^{\omega}=s_1^{\ast}.\]
Similarly, we have $(s_{i-1}^{\ast}\cdots s_1^{\ast}s_n^{\ast} \cdots s_{i+1}^{\ast}s_i^{\ast})^{\omega}=s_i^{\ast}$, for every $2 < i \leq n$.
Therefore, we get that
\[s_1^{\ast}=s_1^{\ast}s_1^{\ast}
=s_1^{\ast}(s_n^{\ast} \cdots s_2^{\ast} s_1^{\ast})^{\omega}
=(s_1^{\ast}s_n^{\ast} \cdots s_2^{\ast})^{\omega} s_1^{\ast}
=s_2^{\ast}s_1^{\ast},\]
\begin{align*}
s_i^{\ast}
=&s_i^{\ast}s_i^{\ast}
=s_i^{\ast}(s_{i-1}^{\ast}\cdots s_1^{\ast}s_n^{\ast} \cdots s_{i+1}^{\ast}s_i^{\ast})^{\omega}\\
=&(s_i^{\ast}s_{i-1}^{\ast}\cdots s_1^{\ast}s_n^{\ast} \cdots s_{i+1}^{\ast})^{\omega}s_i^{\ast}
=s_{i+1}^{\ast}s_i^{\ast}, 
\end{align*}
for every $2 < i < n$,
and 
$s_n^{\ast}=s_{1}^{\ast}s_n^{\ast}$.
Now, by the last equalities, we have
\[s_1^{\ast}s_2^{\ast}=s_1^{\ast}s_n^{\ast}\cdots s_3^{\ast}s_2^{\ast}=s_n^{\ast}\cdots s_3^{\ast}s_2^{\ast}=s_2^{\ast}.\]
Hence, we have $s_2^{\ast}s_1^{\ast}=s_1^{\ast}$ and $s_1^{\ast}s_2^{\ast}=s_2^{\ast}$.
Now, as $(s_1^{\ast})^{+}=s_1^{\ast}$, $s_2^{\ast}\in \varphi^{+}(s_1^{\ast})$ and $s_1^{\ast}s_2^{\ast}=s_2^{\ast}$, we have $s_1^{\ast}=s_2^{\ast}$.
Similarly, we get that  $s_1^{+}=s_2^{+}$.
Therefore, we have $s_1 = s_1^{+}s_2s_1^{\ast}=s_2^{+}s_2s_2^{\ast}=s_2$. A contradiction that $s_1$ and $s_2$ are distinct.
\end{proof}

Since $s = s^{+}ss^{\ast}$, for every $s\in S$, we have $s\ll s$ and, thus, the relation $\ll$ is reflexive.
Also, by part (2) of Proposition~\ref{lllES}, the relation $\ll$ is antisymmetric. 
However, as illustrated in Example~\ref{Ex01}, the relation $\ll$ may not be transitive. 
Therefore, we define $\lll$ as the smallest partially ordered set containing $\ll$.

Let $Z\colon \mathbb{C}S \rightarrow \mathbb{C}S$ be a map given by $Z(s) =\sum\limits_{s'\lll s}s'$ on $s \in S$ with a linear extension.
By applying M\"{o}bius inversion, we can establish an inverse for $Z$,
making it bijective. As mentioned in the following proposition.

\begin{prop}\label{Z}
The mapping $Z$ is bijective. 
\end{prop}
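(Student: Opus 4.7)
The plan is to recognize $Z$ as essentially convolution with the zeta function of the finite poset $(S,\lll)$, and conclude bijectivity from the triangular structure, exactly in the spirit of the M\"obius inversion reviewed in the preliminaries.

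First, I would invoke that $\lll$ is by construction a partial order on the finite set $S$: it is reflexive because $s = s^{+}ss^{\ast}$ gives $s \ll s$, and antisymmetry and transitivity are built into its definition as the smallest partial order containing $\ll$. Hence I may fix a linear extension $\preceq$ of $\lll$ and order the basis $S$ of $\mathbb{C}S$ accordingly. In this basis, the matrix $M$ of $Z$ has entry $M_{s',s}=1$ whenever $s'\lll s$ and $0$ otherwise; this is exactly the matrix of the zeta function $\zeta_{(S,\lll)}$ of the poset.

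Second, because $\lll$ is reflexive every diagonal entry of $M$ equals $1$, and because $s'\lll s$ with $s'\neq s$ forces $s'\prec s$, the matrix $M$ is lower triangular with ones on the diagonal with respect to $\preceq$. Consequently $\det M = 1$, so $M$ is invertible over $\mathbb{Z}$ and $Z$ is a bijection on $\mathbb{C}S$. The explicit inverse is produced by M\"obius inversion as
\[Z^{-1}(s) = \sum_{s'\lll s}\mu_{\lll}(s',s)s',\]
where $\mu_{\lll}$ denotes the M\"obius function of the poset $(S,\lll)$.

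I do not anticipate a serious obstacle here: the only non-trivial content is that $\lll$ really is a partial order on $S$, which has already been arranged (reflexivity was noted explicitly, and antisymmetry follows from part~(2) of Proposition~\ref{lllES} together with the way $\lll$ was defined as the smallest partial order containing $\ll$). Once that is in hand, the bijectivity of $Z$ is the standard fact that the zeta function of a finite poset is invertible in its incidence algebra.
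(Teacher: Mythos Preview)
Your argument is correct and is exactly the one the paper intends: the text preceding Proposition~\ref{Z} already says that by applying M\"obius inversion one obtains an inverse for $Z$, and your triangular-matrix/zeta-function formulation makes this precise. There is nothing to add.
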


%\begin{proof}
%Suppose that there are nonzero scalars $c_1,\ldots,c_n$ of $\mathbb{C}$ and pairwise distinct elements $s_1,\ldots,s_n$ of $S$ such that
%$Z(c_1s_1+\ldots+c_ns_n) = 0$. 
%By part (2) of Proposition~\ref{lllES}, there is an integer $j$ such that $s_j\not\ll s_i$, for all $i\in\{1,\ldots,n\}\setminus\{j\}$. Now, as $c_j\neq 0$ and $s_j\leq s_j$, the term $c_js_j$ is included in $Z(c_1s_1+\ldots+c_ns_n)$, and therefore $Z(c_1s_1+\ldots+c_ns_n)$ is not equal to zero. This contradicts our assumption.
%Then, the map $Z$ is one to one. 
%
%By applying M\"{o}bius inversion, we have 
%\[s= \sum\limits_{t\lll s}\mu_S(t, s)Z(t)=Z(\sum\limits_{t\lll s}\mu_S(t, s)t).\]
%Hence, we can conclude that $Z$ is surjective.
%\end{proof}

%%%%%%%%%%%%%%%%%%%%%%%%%%%%%%%%%%%%%%%%%%%%%%%%%%%%%%%%%%%%%%%%%%%%%%%%%%%%%%%%%%%%%%%%%%%%%%%%%%%%%%%%%%%%%%%%%%%%%%%%%%%%%%%%%%%%%%%%%%%%%%
%%%%%%%%%%%%%%%%%%%%%%%%%%%%%%%%%%%%%%%%%%%%%%%%%%%%%%%%%%%%%%%%%%%%%%%%%%%%%%%%%%%%%%%%%%%%%%%%%%%%%%%%%%%%%%%%%%%%%%%%%%%%%%%%%%%%%%%%%%%%%%

\section{Sequences \(\varphi\) and \(\psi\)}\label{varphiAnspsi}

As assumed in the previous section, we are working with a finite singleton-rich semigroup $S$.

For $s$ and $t$ in $S$,
one recursively defines two sequences $s_i$ and $t_i$ by
$$s_0=s, t_0=t$$
and
\[
\begin{cases}
s_{i+1}=s_it_i^{+}\ \text{and}\ t_{i+1}=s_i^{\ast}t_i &\text{if}\ s_i^{\ast}t_i^{+}=t_i^{+}s_i^{\ast};\\
s_{i+1}=s_i\ \text{and}\ t_{i+1}=t_i                         &\text{else}.\\
\end{cases}
\]
We could define the mapping \[\varepsilon\colon S\times S \rightarrow E(S) \times E(S)\] as follow:
\begin{align*}
\varepsilon^{(s,t)} 
= 
& 
(s_i^{\ast},t_i^{+}),\  \text{where}\ i\ \text{is an integer such that}\ s_i = s_{i+1}\ \text{and}\ t_i = t_{i+1}.
\end{align*}
In the case where \(s_i^{\ast}=t_i^{+}\), for convenience, we denote \(\varepsilon^{(s,t)}\) as \(s_i^{\ast}\).
If $s_i^{\ast}t_i^{+}=t_i^{+}s_i^{\ast}$, for all $0\leq i$, then both sequences $s_i$ and $t_i$ converges to an equal element in $S$.

We define functions $\varphi$ and $\psi$ on $S\times S$ as follows:
\[\varphi(s,t)=(s^{\ast}t)^{+} \ \text{and}\ \psi(s,t)=(st^{+})^{\ast},\]
for every $s,t\in S$.

Now, for $s$ and $t$ in $S$, based on the functions $\varphi$ and $\psi$, we define the sequences $s^{\varphi}_i,t^{\varphi}_i,s^{\psi}_i$ and $t^{\psi}_i$ as follows:
\begin{center}
\begin{tabular}{ll}
\begin{tabular}{ll}
$s^{\varphi}_0=s$, & $t^{\varphi}_0=t;$\\
\text{and} &\\
$s^{\varphi}_{i+1}=s^{\varphi}_i\varphi(s^{\varphi}_i,t^{\varphi}_i)$, & $t^{\varphi}_{i+1} = {s^{\varphi}_i}^{\ast}t^{\varphi}_i$;
\end{tabular}
&
\begin{tabular}{ll}
$s^{\psi}_0=s$, & $t^{\psi}_0=t;$\\
 &\\
$s^{\psi}_{i+1}=s^{\psi}_i{t^{\psi}_i}^{+}$, & $t^{\psi}_{i+1}=\psi(s^{\psi}_i,t^{\psi}_i)t^{\psi}_i$.
\end{tabular}
\end{tabular}
\end{center}

\begin{lem}\label{snrtnl}
Let $s,t\in S$ and
let $m$ be an integer such that
$s_i^{\ast}t_i^{+}=t_i^{+}s_i^{\ast}$, for all $i<m$.
Then, we have
\begin{enumerate}
\item $s_{i+1}^{\ast}, t_{i+1}^{+}\leq s_i^{\ast}, t_i^{+}$, 
\item $s_{i+1}=st_i^{+}$ and $t_{i+1}=s_i^{\ast}t$, 
\end{enumerate}
for all $i<m$.
\end{lem}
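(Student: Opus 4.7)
The plan is to prove both parts simultaneously: part (1) can be established at each index $i<m$ directly from the commutativity hypothesis at that index, while part (2) follows by induction on $i$ using part (1) at the previous step.

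For part (1), fix $i<m$. Using $s_i^{\ast}t_i^{+}=t_i^{+}s_i^{\ast}$ together with the definition $s_{i+1}=s_it_i^{+}$ and the idempotency of $s_i^{\ast}$ and $t_i^{+}$, one readily checks that $s_{i+1}s_i^{\ast}=s_{i+1}$ and $s_{i+1}t_i^{+}=s_{i+1}$, so both $s_i^{\ast}$ and $t_i^{+}$ lie in $\varphi^{\ast}(s_{i+1})\subseteq\langle\varphi^{\ast}(s_{i+1})\rangle$. Since $s_{i+1}^{\ast\ast}=\{s_{i+1}^{\ast}\}$ is the kernel of this monoid and hence a singleton two-sided ideal, products with $s_{i+1}^{\ast}$ collapse back to $s_{i+1}^{\ast}$, yielding $s_{i+1}^{\ast}\leq s_i^{\ast}$ and $s_{i+1}^{\ast}\leq t_i^{+}$ in the natural partial order on idempotents. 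The two mirror-image computations for $t_{i+1}=s_i^{\ast}t_i$ handle $t_{i+1}^{+}\leq s_i^{\ast}$ and $t_{i+1}^{+}\leq t_i^{+}$.

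For part (2), I induct on $i$. The case $i=0$ is just the defining recursion $s_1=s_0t_0^{+}=st^{+}$ and $t_1=s_0^{\ast}t_0=s^{\ast}t$. For the inductive step, with $1\leq i<m$ and inductive hypothesis $s_i=st_{i-1}^{+}$, one has
\[s_{i+1}=s_it_i^{+}=st_{i-1}^{+}t_i^{+};\]
part (1) applied at step $i-1$ (available since $i-1<m$) gives $t_i^{+}\leq t_{i-1}^{+}$, hence $t_{i-1}^{+}t_i^{+}=t_i^{+}$, so $s_{i+1}=st_i^{+}$. The dual chain of equalities yields $t_{i+1}=s_i^{\ast}t$.

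The only bookkeeping subtlety is that each $s_j^{\ast}$ and $t_j^{+}$ is itself an idempotent, so the natural partial order $e\leq f\iff ef=fe=e$ on $E(S)$ can be applied directly to these elements. Beyond this, the argument is routine algebraic manipulation; I do not anticipate a genuine obstacle, only the need to keep the four inequalities of (1) and the two recursions of (2) properly aligned through the induction.
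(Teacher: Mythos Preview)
Your proposal is correct and follows essentially the same approach as the paper: part (1) is obtained by showing $s_i^{\ast},t_i^{+}\in\varphi^{\ast}(s_{i+1})$ (respectively $\varphi^{+}(t_{i+1})$) via the commutativity hypothesis and idempotency, then invoking the singleton-rich assumption, and part (2) is the same induction using $t_i^{+}\leq t_{i-1}^{+}$ to collapse $t_{i-1}^{+}t_i^{+}$ to $t_i^{+}$. Your slightly more explicit justification of why the singleton kernel forces $s_{i+1}^{\ast}\leq s_i^{\ast},t_i^{+}$ in the natural partial order is a welcome clarification but not a departure in method.
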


\begin{proof}
(1)
Since
\[s_{i+1}s_i^{\ast} = s_it_i^{+}s_i^{\ast} = s_is_i^{\ast}t_i^{+} = s_it_i^{+} = s_{i+1},\]
we have $s_i^{\ast} \in\varphi^{\ast}(s_{i+1})$. Now, as $s_{i+1}^{\ast\ast}$ has only one element, we have $s_{i+1}^{\ast}\leq s_i^{\ast}$.
Also, as
\[s_{i+1}t_i^{+} = s_it_i^{+}t_i^{+} = s_it_i^{+} = s_{i+1},\]
we have $s_{i+1}^{\ast}\leq t_i^{+}$. Similarly, we have $t_{i+1}^{+}\leq s_i^{\ast}, t_i^{+}$.

(2)
We prove that $s_{i+1}=st_i^{+}$ and $t_{i+1}=s_i^{\ast}t$, by induction on $i$.
The case $i = 0$ is clear.
Assume then that $i > 0$, and that the result holds for smaller values of $i$.
We have $s_{i+1} = s_it_i^{+}$. By hypothesis of induction, we have $s_i = st_{i-1}^{+}$. 
It follows that $s_{i+1} = st_{i-1}^{+}t_i^{+}$. Now, as $t_i^{+}\leq t_{i-1}^{+}$ (by part (1)),
we have $s_{i+1} = st_i^{+}$.
Similarly, we have $t_{i+1}=s_i^{\ast}t$.
\end{proof}

\begin{lem}\label{sirtil}
For elements $s$ and $t$ in $S$, we have 
\begin{enumerate}
\item
$\varphi(s^{\varphi}_i,t^{\varphi}_i) \leq {s^{\varphi}_i}^{\ast}$ and $\psi(s^{\psi}_i,t^{\psi}_i) \leq {t^{\psi}_i}^{+}$;
\item 
$\varphi(s^{\varphi}_{i+1},t^{\varphi}_{i+1}) \leq \varphi(s^{\varphi}_i,t^{\varphi}_i)$ and $\psi(s^{\psi}_{i+1},t^{\psi}_{i+1}) \leq \psi(s^{\psi}_i,t^{\psi}_i)$, for all $0 \leq i$;
\item 
$s^{\varphi}_{i+1}=s\varphi(s^{\varphi}_i,t^{\varphi}_i)$ 
and 
$t^{\varphi}_{i+1}=\varphi(s^{\varphi}_i,t^{\varphi}_i)t$,
for all $0 \leq i$;
\item 
$s^{\psi}_{i+1}=s\psi(s^{\psi}_i,t^{\psi}_i)$ 
and 
$t^{\psi}_{i+1}=\psi(s^{\psi}_i,t^{\psi}_i)t$,
for all $0 \leq i$.
\item
$\varphi(s^{\varphi}_i,t^{\varphi}_i)=\varphi(s^{\varphi}_i,t)$ and $\psi(s^{\psi}_i,t^{\psi}_i)=\psi(s,t^{\psi}_i)$, for every $0 \leq i$.
\end{enumerate}
\end{lem}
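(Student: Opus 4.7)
The plan is to prove the five parts in the order stated, since each rests on the previous ones. The main tools will be Lemma \ref{eslls}(2) together with the natural partial order on idempotents: if $e \leq f$ in $E(S)$ then $ef = fe = e$, so comparable products of idempotents collapse. I will also use repeatedly the fact, already noted in the excerpt, that every idempotent $e$ satisfies $e^{\ast} = e^{+} = e$ in the singleton-rich setting.

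For part (1), the definition $\varphi(s^{\varphi}_i, t^{\varphi}_i) = ((s^{\varphi}_i)^{\ast} t^{\varphi}_i)^{+}$ combined with Lemma \ref{eslls}(2) directly yields $\varphi(s^{\varphi}_i, t^{\varphi}_i) \leq ((s^{\varphi}_i)^{\ast})^{+} = (s^{\varphi}_i)^{\ast}$, and the dual argument handles $\psi$. For part (2), I apply Lemma \ref{eslls}(2) to $s^{\varphi}_{i+1} = s^{\varphi}_i\,\varphi(s^{\varphi}_i, t^{\varphi}_i)$ to get $(s^{\varphi}_{i+1})^{\ast} \leq \varphi(s^{\varphi}_i, t^{\varphi}_i)$, and then (1) applied at index $i+1$ produces the chain $\varphi(s^{\varphi}_{i+1}, t^{\varphi}_{i+1}) \leq (s^{\varphi}_{i+1})^{\ast} \leq \varphi(s^{\varphi}_i, t^{\varphi}_i)$; the $\psi$-case is strictly dual.

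Parts (3) and (4) I would prove by induction on $i$. In the $\varphi$-case, the inductive hypothesis $s^{\varphi}_i = s\,\varphi(s^{\varphi}_{i-1}, t^{\varphi}_{i-1})$ yields $s^{\varphi}_{i+1} = s\,\varphi(s^{\varphi}_{i-1}, t^{\varphi}_{i-1})\,\varphi(s^{\varphi}_i, t^{\varphi}_i)$, and by (2) the two $\varphi$-idempotents are comparable so their product collapses to $\varphi(s^{\varphi}_i, t^{\varphi}_i)$. For the companion equality, the hypothesis $t^{\varphi}_i = \varphi(s^{\varphi}_{i-1}, t^{\varphi}_{i-1})\,t$ together with $(s^{\varphi}_i)^{\ast} \leq \varphi(s^{\varphi}_{i-1}, t^{\varphi}_{i-1})$ (established while proving (2)) gives $t^{\varphi}_{i+1} = (s^{\varphi}_i)^{\ast}\,t$. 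To reconcile this with $\varphi(s^{\varphi}_i, t^{\varphi}_i)\,t$, the defining identity $\varphi(s^{\varphi}_i, t^{\varphi}_i)\cdot(s^{\varphi}_i)^{\ast} t^{\varphi}_i = (s^{\varphi}_i)^{\ast} t^{\varphi}_i$ combined with $(s^{\varphi}_i)^{\ast} t^{\varphi}_i = (s^{\varphi}_i)^{\ast} t$ and $\varphi(s^{\varphi}_i, t^{\varphi}_i)\,(s^{\varphi}_i)^{\ast} = \varphi(s^{\varphi}_i, t^{\varphi}_i)$ (from (1)) forces $\varphi(s^{\varphi}_i, t^{\varphi}_i)\,t = (s^{\varphi}_i)^{\ast}\,t$. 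The base case $i=0$ is handled by the same collapse. Part (4) is the exact dual.

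Finally, (5) is a quick consequence of (3): since $t^{\varphi}_i = \varphi(s^{\varphi}_{i-1}, t^{\varphi}_{i-1})\,t$ for $i \geq 1$ and $(s^{\varphi}_i)^{\ast} \leq \varphi(s^{\varphi}_{i-1}, t^{\varphi}_{i-1})$, we have $(s^{\varphi}_i)^{\ast} t^{\varphi}_i = (s^{\varphi}_i)^{\ast} t$, and applying $(\cdot)^{+}$ gives $\varphi(s^{\varphi}_i, t^{\varphi}_i) = \varphi(s^{\varphi}_i, t)$; the $\psi$-statement is dual and the case $i=0$ is trivial. The main obstacle throughout is that we work outside \pv{ECom}, so idempotents do not commute in general: every product collapse must be justified strictly by the natural partial order and by the one-sided defining properties of $(\cdot)^{\ast}$ and $(\cdot)^{+}$, which makes careful bookkeeping of the side on which each factor sits essential through the induction.
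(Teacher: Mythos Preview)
Your proposal is correct and follows essentially the same approach as the paper's proof: part~(1) via $(s_1s_2)^{+}\leq s_1^{+}$, parts~(3)--(5) by induction using the collapse of comparable idempotents from~(2), all with the $\psi$-statements handled dually. Your treatment of part~(2) is in fact slightly cleaner than the paper's, since you factor through the intermediate inequality $(s^{\varphi}_{i+1})^{\ast}\leq\varphi(s^{\varphi}_i,t^{\varphi}_i)$ and then invoke~(1) at index $i+1$, whereas the paper argues directly that $\varphi(s^{\varphi}_i,t^{\varphi}_i)\in\varphi^{+}\bigl((s^{\varphi}_{i+1})^{\ast}t^{\varphi}_{i+1}\bigr)$; both routes are short and equivalent.
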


\begin{proof}
(1)
As \({s^{\varphi}_i}^{\ast}{s^{\varphi}_i}^{\ast}t^{\varphi}_i={s^{\varphi}_i}^{\ast}t^{\varphi}_i\), we have \(\varphi(s^{\varphi}_i,t^{\varphi}_i) = ({s^{\varphi}_i}^{\ast}t^{\varphi}_i)^{+} \leq {s^{\varphi}_i}^{\ast}\).

Similarly, we have $\psi(s^{\varphi}_i,t^{\varphi}_i) \leq {t^{\varphi}_i}^{+}$.

(2)
Since $\varphi(s^{\varphi}_i,t^{\varphi}_i)$ is an idempotent, we have $\varphi(s^{\varphi}_i,t^{\varphi}_i)\in \varphi^{\ast}(s^{\varphi}_i\varphi(s^{\varphi}_i,t^{\varphi}_i))$.
Then, we deduce 
\[
\varphi(s^{\varphi}_i,t^{\varphi}_i)(s^{\varphi}_i\varphi(s^{\varphi}_i,t^{\varphi}_i))^{\ast}{s^{\varphi}_i}^{\ast}t^{\varphi}_i = (s^{\varphi}_i\varphi(s^{\varphi}_i,t^{\varphi}_i))^{\ast}{s^{\varphi}_i}^{\ast}t^{\varphi}_i.
\]
It follows that
\[
\varphi(s^{\varphi}_i,t^{\varphi}_i) \in \varphi^{+}((s^{\varphi}_i\varphi(s^{\varphi}_i,t^{\varphi}_i))^{\ast}{s^{\varphi}_i}^{\ast}t^{\varphi}_i)
\]
and, thus,
\[ 
\varphi(s^{\varphi}_{i+1},t^{\varphi}_{i+1})
=
({s^{\varphi}_{i+1}}^{\ast}t^{\varphi}_{i+1})^{+}
=
((s^{\varphi}_i\varphi(s^{\varphi}_i,t^{\varphi}_i))^{\ast}{s^{\varphi}_i}^{\ast}t^{\varphi}_i)^{+}
\leq
\varphi(s^{\varphi}_i,t^{\varphi}_i).
\]

Similarly, we have $\psi(s^{\psi}_{i+1},t^{\psi}_{i+1}) \leq \psi(s^{\psi}_i,t^{\psi}_i)$.

(3)
We prove that $s^{\varphi}_{i+1}=s\varphi(s^{\varphi}_i,t^{\varphi}_i)$, by induction on $i$.
The case $i = 0$ is clear.
Assume then that $i > 0$, and that the result holds for smaller values of $i$.
We have $s^{\varphi}_{i+1} =s^{\varphi}_i\varphi(s^{\varphi}_i,t^{\varphi}_i)$. By hypothesis of induction, we have $s^{\varphi}_{i+1} =s\varphi(s^{\varphi}_{i-1},t^{\varphi}_{i-1})\varphi(s^{\varphi}_i,t^{\varphi}_i)$. 
Now, by part (2), we have $s^{\varphi}_{i+1} =s\varphi(s^{\varphi}_i,t^{\varphi}_i)$. 

Also, we prove that $t^{\varphi}_{i+1}=\varphi(s^{\varphi}_i,t^{\varphi}_i)t$, by induction on $i$.
We have $t^{\varphi}_{i+1} = {s^{\varphi}_i}^{\ast}t^{\varphi}_i = ({s^{\varphi}_i}^{\ast}t^{\varphi}_i)^{+}{s^{\varphi}_i}^{\ast}t^{\varphi}_i$.
Then, as $({s^{\varphi}_i}^{\ast}t^{\varphi}_i)^{+} \leq {s^{\varphi}_i}^{\ast}$ and by the assumption of the induction, we have
\[t^{\varphi}_{i+1} = \varphi(s^{\varphi}_i,t^{\varphi}_i)\varphi(s^{\varphi}_{i-1},t^{\varphi}_{i-1})t.\]
Now, by part (2), as $\varphi(s^{\varphi}_{i},t^{\varphi}_{i}) \leq \varphi(s^{\varphi}_{i-1},t^{\varphi}_{i-1})$, the result follows.

(4)
Similar to part (3), part (4) also holds.

(5)
The case $i = 0$ is clear.
Assume then that $i > 0$.
By part (3), we have 
\[
\varphi(s^{\varphi}_i,t^{\varphi}_i)
=
({s^{\varphi}_i}^{\ast}t^{\varphi}_i)^{+}
=
({s^{\varphi}_i}^{\ast}\varphi(s^{\varphi}_{i-1},t^{\varphi}_{i-1})t)^{+}.
\]
Since $s^{\varphi}_i= s\varphi(s^{\varphi}_{i-1},t^{\varphi}_{i-1})$, we deduce ${s^{\varphi}_i}^{\ast}\varphi(s^{\varphi}_{i-1},t^{\varphi}_{i-1}) = {s^{\varphi}_i}^{\ast}$ and, thus, 
\[\varphi(s^{\varphi}_i,t^{\varphi}_i) = ({s^{\varphi}_i}^{\ast}t)^{+} = \varphi(s^{\varphi}_i,t).\]

Similarly, we have $\psi(s^{\psi}_i,t^{\psi}_i)=\psi(s,t^{\psi}_i)$.
\end{proof}

Since $S$ is finite, we consider the following descending chain:
\[S= S_1 \supsetneqq S_2 \supsetneqq \cdots \supsetneqq S_{l} \supsetneqq S_{l+1} = \emptyset\]
which forms a principal series for $S$ within this section.

\begin{lem}\label{SiSi1Brandt}
Let $1 \leq i\leq l$. 
If $S_i/S_{i+1}$ is not null, then $S_i/S_{i+1}$ is an inverse completely $0$-simple semigroup or a group.
Moreover, $S_i/S_{i+1}$ is isomorphic with an $n\times n$ Brandt semigroup $\mathcal{B}_{n}(G)$ over a group $G$.
\end{lem}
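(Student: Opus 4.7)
The approach is to show that every $\eL$-class and every $\R$-class of the non-zero elements of $S_i/S_{i+1}$ contains a unique idempotent, which forces the sandwich matrix of the associated Rees matrix representation to have exactly one non-zero entry per row and per column, giving the Brandt structure.

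Since $S_i/S_{i+1}$ is not null, it is completely simple or completely $0$-simple, hence isomorphic to a regular Rees matrix semigroup $\mathcal{M}^0(G, n, m; P)$. The non-zero idempotents of this quotient correspond bijectively to the idempotents of $S$ lying in $S_i \setminus S_{i+1}$, since any non-zero product in the factor coincides with the product in $S$. Suppose, for contradiction, that two distinct such idempotents $e$ and $f$ are $\eL$-equivalent in the factor. A direct computation in the Rees matrix representation, using that both $p$-entries involved are non-zero (as $e$ and $f$ are idempotents in the same column), yields $ef = e$ and $fe = f$ in $S_i/S_{i+1}$; since both products are non-zero there, the same identities hold in $S$.

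Now, because $S$ is singleton-rich and $e, f$ are idempotents, $e^{\ast} = e$ and $f^{\ast} = f$. Applying Lemma~\ref{eslls}(2) to the product $ef = e$ yields $e = (ef)^{\ast} \leq f^{\ast} = f$; the relation $\leq$ here is the natural partial order on $E(S)$, since $(ef)^{\ast\ast}$ is the kernel singleton of the band $\langle \varphi^{\ast}(ef) \rangle$ and therefore absorbs every element of that band on either side, forcing its unique element to lie below every generator. Symmetrically, $f \leq e$ from $fe = f$, which forces $e = f$, a contradiction. A dual argument using $\varphi^{+}$ and the second assertion of Lemma~\ref{eslls}(2) handles the $\R$-classes.

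Consequently, $P$ has exactly one non-zero entry per row and per column, so $n = m$. A standard row/column rescaling over $G$ then normalizes $P$ to the $n \times n$ identity matrix, identifying $S_i/S_{i+1}$ with the Brandt semigroup $\mathcal{B}_n(G)$; in the completely simple case this collapses to $n = 1$ and the factor is simply the group $G$. The main obstacle is justifying that the ordering $\leq$ appearing in Lemma~\ref{eslls}(2) coincides with the natural partial order on idempotents, a point that ultimately rests on unpacking that a singleton ideal inside the idempotent-generated band $\langle \varphi^{\ast}(ef) \rangle$ must be absorbed by two-sided multiplication by any element of the band.
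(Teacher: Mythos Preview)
Your proposal is correct and follows essentially the same approach as the paper. Both arguments locate two distinct idempotents $e,f$ in the same $\eL$-class of the Rees matrix factor, observe that $ef=e$ and $fe=f$ in $S$, and then derive a contradiction from singleton-richness; the paper phrases this as ``both $e$ and $f$ lie in the one-element kernel $e^{\ast\ast}$'', while you route the same contradiction through Lemma~\ref{eslls}(2) and the natural partial order on idempotents. The ``obstacle'' you flag---that the $\leq$ of Lemma~\ref{eslls}(2) is the natural order on $E(S)$---is exactly the content of the kernel being a singleton two-sided ideal, so it is not a genuine gap.
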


\begin{proof}
Since $S_i/S_{i+1}$ is not null, 
$S_i/S_{i+1}$ is isomorphic with a regular Rees matrix semigroup $\mathcal{M}^{0}(G, n,m;P)$ or $\mathcal{M}(G, n,m;P)$.
If $S_i/S_{i+1}$ is not inverse, then one or both of the following conditions hold:
\begin{enumerate}
\item there exist integers $1\leq k_1,k_2\leq n$ and $1\leq j \leq m$ such that $k_1 \neq k_2$ and $p_{jk_1},p_{jk_2}\neq 0$;
\item there exist integers $1\leq k\leq n$ and $1\leq j_1,j_2 \leq m$ such that $j_1 \neq j_2$ and $p_{j_1k},p_{j_2k}\neq 0$. 
\end{enumerate}
By symmetry, we may assume the first case.
As the elements $(k_1,p^{-1}_{jk_1},j)$ and $(k_2,p^{-1}_{jk_2},j)$ 
are in the kernel of the subset $\langle\varphi^{\ast}(k_1,p^{-1}_{jk_1},j)\rangle$,
this contradicts the section's assumption that the subset $(k_1,p^{-1}_{jk_1},j)^{\ast\ast}$ contains only one element.

The result follows.
\end{proof}

\begin{lem}\label{ConvergephiAndpsi}
Let $s,t\in S$. There exist integers $i^{\circ}$ and $j^{\circ}$ such that 
$\varphi(s^{\varphi}_i,t^{\varphi}_i) = \varphi(s^{\varphi}_{i^{\circ}},t^{\varphi}_{i^{\circ}})$, for every $i \geq i^{\circ}$,
and 
$\psi(s^{\psi}_j,t^{\psi}_j) = \psi(s^{\psi}_{j^{\circ}},t^{\psi}_{j^{\circ}})$, for every $j\geq j^{\circ}$. 
\end{lem}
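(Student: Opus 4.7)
The plan is to observe that this statement is essentially a finiteness argument once we invoke the monotonicity given by the previous lemma. By Lemma~\ref{sirtil}(2), the sequences $\bigl(\varphi(s^{\varphi}_i,t^{\varphi}_i)\bigr)_{i\geq 0}$ and $\bigl(\psi(s^{\psi}_j,t^{\psi}_j)\bigr)_{j\geq 0}$ are both weakly descending with respect to the natural partial order $\leq$ on $E(S)$, since each term is an idempotent (of the form $x^{+}$ or $x^{\ast}$) and $S$ is singleton-rich.

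Next I would remark that $\leq$ is a genuine partial order on the finite set $E(S)$. Therefore, any weakly descending sequence of idempotents must eventually stabilize: indeed, in a finite poset, there can be no infinite strictly descending chain, so the sequence is constant from some index onward. Choose $i^{\circ}$ to be the smallest index at which $\varphi(s^{\varphi}_i,t^{\varphi}_i)$ stops strictly decreasing, and define $j^{\circ}$ analogously for the $\psi$-sequence; these indices have the required property.

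There is no real obstacle here: the content of the proof lies entirely in Lemma~\ref{sirtil}(2), which supplies the monotonicity, combined with the finiteness of $E(S)$. If anything, the only point to double-check is that $\varphi(s^{\varphi}_i,t^{\varphi}_i)$ and $\psi(s^{\psi}_j,t^{\psi}_j)$ are bona fide idempotents, which is immediate from the singleton-rich hypothesis and the fact that $x^{+}$ and $x^{\ast}$ are idempotents for every $x\in S$. Hence the proof can be written in just a few lines, invoking Lemma~\ref{sirtil}(2) and the finiteness of $E(S)$.
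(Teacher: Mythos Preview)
Your argument is correct and is in fact more direct than the paper's own proof. You invoke Lemma~\ref{sirtil}(2) to get a weakly descending chain of idempotents and then use only that $(E(S),\leq)$ is a finite poset, so any such chain stabilizes. The paper instead routes the argument through the principal series of $S$: it first localizes the tail of the sequence in a single factor $S_m/S_{m+1}$, appeals to Lemma~\ref{SiSi1Brandt} to identify that factor as a Brandt semigroup $\mathcal{B}_n(G)$, and then observes that distinct nonzero idempotents in a Brandt semigroup are incomparable under $\leq$, forcing equality. Both approaches ultimately use Lemma~\ref{sirtil}(2) and finiteness; the paper's version additionally exercises Lemma~\ref{SiSi1Brandt}, while your version needs nothing beyond the elementary fact that a finite poset has no infinite strictly descending chain. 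Your route is shorter and avoids the structural digression through Rees matrix semigroups, at no cost in generality.
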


\begin{proof}
Since $S$ is finite, there exist integers $m$ and $i^{\circ}$ such that \[\varphi(s^{\varphi}_i,t^{\varphi}_i) \in S_m\setminus S_{m+1},\] for every $i\geq i^{\circ}$.

As the element $\varphi(s^{\varphi}_{i^{\circ}},t^{\varphi}_{i^{\circ}})$ is idempotent, 
by Lemma~\ref{SiSi1Brandt}, 
$S_m/S_{m+1}$ is isomorphic with an $n\times n$ Brandt semigroup $\mathcal{B}_n(G)$ over a group $G$.
Then, $\varphi(s^{\varphi}_{i^{\circ}},t^{\varphi}_{i^{\circ}}) = (k_{i^{\circ}},1_G,k_{i^{\circ}})$ and $\varphi(s^{\varphi}_{i^{\circ}+1},t^{\varphi}_{i^{\circ}+1}) = (k_{i^{\circ}+1},1_G,k_{i^{\circ}+1})$, for some elements $(k_{i^{\circ}},1_G,k_{i^{\circ}}), (k_{i^{\circ}+1},1_G,k_{i^{\circ}+1})\in \mathcal{B}_n(G)$.

By Lemma~\ref{sirtil}.(2), we have $\varphi(s^{\varphi}_{i^{\circ}+1},t^{\varphi}_{i^{\circ}+1}) \leq \varphi(s^{\varphi}_{i^{\circ}},t^{\varphi}_{i^{\circ}})$.
Then, we conclude that $k_{i^{\circ}}=k_{i^{\circ}+1}$ and, thus 
$\varphi(s^{\varphi}_{i^{\circ}},t^{\varphi}_{i^{\circ}}) = \varphi(s^{\varphi}_{i^{\circ}+1},t^{\varphi}_{i^{\circ}+1})$.
Using the same argument, we find that $\varphi(s^{\varphi}_i,t^{\varphi}_i) = \varphi(s^{\varphi}_{i^{\circ}},t^{\varphi}_{i^{\circ}})$, for every $i \geq i^{\circ}$.

Similarly, we can demonstrate that there exists an integer $j^{\circ}$ such that $\psi(s^{\psi}_j,t^{\psi}_j) = \psi(s^{\psi}_{j^{\circ}},t^{\psi}_{j^{\circ}})$, for every $j\geq j^{\circ}$. 
\end{proof}

Then, by Lemma~\ref{ConvergephiAndpsi}, the sequences $\varphi(s^{\varphi}_i,t^{\varphi}_i)$ converges to an equal element. As well for the sequences $\psi(s^{\psi}_j,t^{\psi}_j)$.
We denote the converge of these sequences by $\varphi^{(s,t)}$ and $\psi^{(s,t)}$, respectively.

\begin{cor}\label{sirtilCor}
We have 
\(\varphi^{(s,t)} \leq s^{\ast}$ and $\psi^{(s,t)} \leq t^{+}\), for every $s,t\in S$.
\end{cor}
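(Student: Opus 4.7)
The plan is direct: the corollary falls out by combining parts (1) and (2) of Lemma~\ref{sirtil} with the stabilisation statement of Lemma~\ref{ConvergephiAndpsi}, so I would simply assemble these ingredients and pass to the limit.

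First, I would apply Lemma~\ref{sirtil}(1) at the initial index $i=0$. Since $s^{\varphi}_0=s$ and $t^{\varphi}_0=t$ by definition, this immediately gives $\varphi(s^{\varphi}_0,t^{\varphi}_0) \leq s^{\ast}$, and dually $\psi(s^{\psi}_0,t^{\psi}_0)\leq t^{+}$.

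Next, iterating Lemma~\ref{sirtil}(2) produces the descending chain of idempotents
\[\cdots \leq \varphi(s^{\varphi}_{i+1},t^{\varphi}_{i+1}) \leq \varphi(s^{\varphi}_i,t^{\varphi}_i) \leq \cdots \leq \varphi(s^{\varphi}_0,t^{\varphi}_0) \leq s^{\ast}.\]
By Lemma~\ref{ConvergephiAndpsi} this chain is eventually constant with value $\varphi^{(s,t)}$, so transitivity of the natural partial order on idempotents forces $\varphi^{(s,t)} \leq s^{\ast}$. The same reasoning, applied to the $\psi$-sequences and using the dual half of Lemma~\ref{sirtil}(1)(2), yields $\psi^{(s,t)} \leq t^{+}$.

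I do not foresee any real obstacle here: the corollary is just a bookkeeping consequence of the monotonicity already established. The only point to verify is that transitivity along the descending chain genuinely holds in the order on idempotents in use, but this is standard, since $e\leq f$ and $f\leq g$ (meaning $ef=fe=e$ and $fg=gf=f$) imply $eg=e(fg)=(ef)g=ef=e$ and symmetrically $ge=e$.
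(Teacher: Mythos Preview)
Your argument is correct and matches the paper's own proof, which simply says the result follows from parts (1) and (2) of Lemma~\ref{sirtil}. You have just spelled out the chain of inequalities and the passage to the stabilised value explicitly.
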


\begin{proof}
The result follows from Lemma~\ref{sirtil}, relying on parts (1) and (2).
\end{proof}

Note that it may be that \(\varphi^{(s,t)} \not\leq s^{+}$ or $\psi^{(s,t)} \not\leq t^{\ast}\) (see Example~\ref{Ex00}).

\begin{lem}\label{ephiAndpsi}
Let $s,t \in S$.
We have $st = s\varphi^{(s,t)}t = s\psi^{(s,t)}t$.
\end{lem}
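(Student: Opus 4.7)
My approach is to establish the invariant $s^{\varphi}_i t^{\varphi}_i=st$ for every $i\geq 0$ by induction on $i$, then read off the identity at a stabilized index via Lemma~\ref{ConvergephiAndpsi}. The formula $st=s\psi^{(s,t)}t$ will follow by the dual argument.

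The base case $i=0$ is immediate. For the inductive step, abbreviate $e_i=\varphi(s^{\varphi}_i,t^{\varphi}_i)=({s^{\varphi}_i}^{\ast}t^{\varphi}_i)^{+}$. The defining property of $(\cdot)^{+}$ yields $e_i\cdot{s^{\varphi}_i}^{\ast}t^{\varphi}_i={s^{\varphi}_i}^{\ast}t^{\varphi}_i$, and the defining property of $(\cdot)^{\ast}$ yields $s^{\varphi}_i{s^{\varphi}_i}^{\ast}=s^{\varphi}_i$. Unfolding the recursion, I compute
\begin{align*}
s^{\varphi}_{i+1}t^{\varphi}_{i+1}
&=s^{\varphi}_i e_i\cdot{s^{\varphi}_i}^{\ast}t^{\varphi}_i
=s^{\varphi}_i\bigl(e_i\,{s^{\varphi}_i}^{\ast}t^{\varphi}_i\bigr)\\
&=s^{\varphi}_i\bigl({s^{\varphi}_i}^{\ast}t^{\varphi}_i\bigr)
=s^{\varphi}_i t^{\varphi}_i,
\end{align*}
which closes the induction.

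Next I invoke Lemma~\ref{ConvergephiAndpsi} to fix $i$ large enough that $e_i=\varphi^{(s,t)}$. Lemma~\ref{sirtil}(3) then gives $s^{\varphi}_{i+1}=s\varphi^{(s,t)}$ and $t^{\varphi}_{i+1}=\varphi^{(s,t)}t$, and combining these with the invariant yields
$$st=s^{\varphi}_{i+1}t^{\varphi}_{i+1}=s\varphi^{(s,t)}\cdot\varphi^{(s,t)}t=s\varphi^{(s,t)}t,$$
using idempotency of $\varphi^{(s,t)}$. The dual identity $st=s\psi^{(s,t)}t$ follows by the symmetric argument: with $f_i=\psi(s^{\psi}_i,t^{\psi}_i)=(s^{\psi}_i{t^{\psi}_i}^{+})^{\ast}$, the defining identities $s^{\psi}_i{t^{\psi}_i}^{+}f_i=s^{\psi}_i{t^{\psi}_i}^{+}$ and ${t^{\psi}_i}^{+}t^{\psi}_i=t^{\psi}_i$ give $s^{\psi}_{i+1}t^{\psi}_{i+1}=s^{\psi}_it^{\psi}_i$ inductively, and Lemma~\ref{sirtil}(4) together with Lemma~\ref{ConvergephiAndpsi} delivers $st=s\psi^{(s,t)}t$.

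I do not anticipate any real obstacle: the whole argument reduces to one short computation using only the defining properties of $(\cdot)^{+}$ and $(\cdot)^{\ast}$, together with the previously proved convergence of the auxiliary sequences. The one point requiring a moment of care is choosing a stable index so that Lemma~\ref{sirtil}(3) can be applied to convert the stabilized idempotent into the desired insertion between $s$ and $t$.
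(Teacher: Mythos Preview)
Your proof is correct and follows essentially the same approach as the paper's: both establish an inductive invariant along the $\varphi$-sequence using only the defining identities for $(\cdot)^{\ast}$ and $(\cdot)^{+}$, then invoke Lemma~\ref{ConvergephiAndpsi} and Lemma~\ref{sirtil}(3) at a stabilized index. The paper carries the equivalent invariant $st=s\,\varphi(s^{\varphi}_i,t^{\varphi}_i)\,t$, whereas your choice $s^{\varphi}_i t^{\varphi}_i=st$ makes the inductive step slightly shorter.
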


\begin{proof}
We prove that $st = s\varphi(s^{\varphi}_i,t^{\varphi}_i)t$, by induction on $i$.

First, we have 
\[
st
=
ss^{\ast}t
=
s(s^{\ast}t)^{+}s^{\ast}t
=
s(s^{\ast}t)^{+}t
=
s\varphi(s,t)t
= 
s\varphi(s^{\varphi}_0,t^{\varphi}_0)t.
\]

Now, assume then that $0 \leq i'$, and $st = s\varphi(s^{\varphi}_i,t^{\varphi}_i)t$, for every $i \leq i'$.
Now, by using Lemma~\ref{sirtil}, we have
\begin{align*}
st 
&
=
s\varphi(s^{\varphi}_{i'},t^{\varphi}_{i'})t
=
s^{\varphi}_{i'+1}t^{\varphi}_{i'+1}
=
s^{\varphi}_{i'+1}{s^{\varphi}_{i'+1}}^{\ast}t^{\varphi}_{i'+1}
=
s^{\varphi}_{i'+1}({s^{\varphi}_{i'+1}}^{\ast}t^{\varphi}_{i'+1})^{+}{s^{\varphi}_{i'+1}}^{\ast}t^{\varphi}_{i'+1}\\
&
=
s^{\varphi}_{i'+1}({s^{\varphi}_{i'+1}}^{\ast}{t^{\varphi}_{i'+1})}^{+}t^{\varphi}_{i'+1}
=
s^{\varphi}_{i'+1}\varphi(s^{\varphi}_{i'+1},t^{\varphi}_{i'+1})t^{\varphi}_{i'+1}\\
&
=
s \varphi(s^{\varphi}_{i'},t^{\varphi}_{i'}) \varphi(s^{\varphi}_{i'+1},t^{\varphi}_{i'+1}) \varphi(s^{\varphi}_{i'},t^{\varphi}_{i'}) t
=
s\varphi(s^{\varphi}_{i'+1},t^{\varphi}_{i'+1})t.
\end{align*}

It follows that $st = s\varphi^{(s,t)}t$.

Similarly, we have $st = s\psi^{(s,t)}t$.
\end{proof}

\begin{lem}\label{phiplusstarequal}
We have 
\[
(s\varphi^{(s,t)})^{\ast} = (\varphi^{(s,t)}t)^{+} = \varphi^{(s,t)},\
\text{and}\ 
(s\psi^{(s,t)})^{\ast} = (\psi^{(s,t)}t)^{+} = \psi^{(s,t)},\]
for every $s,t$ in $S$.
\end{lem}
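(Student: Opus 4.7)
The plan is to identify $\varphi^{(s,t)}$ (and $\psi^{(s,t)}$) with concrete terms of the already-stabilized sequences from Lemma~\ref{ConvergephiAndpsi}. I will treat only the $\varphi$ equalities in detail, since the $\psi$ equalities follow by the same argument using the dual parts of Lemma~\ref{sirtil}. Let $i^\circ$ be the stabilization index from Lemma~\ref{ConvergephiAndpsi}. By Lemma~\ref{sirtil}(3), at index $i^\circ + 1$ we already have the closed-form expressions $s^\varphi_{i^\circ+1} = s\varphi(s^\varphi_{i^\circ}, t^\varphi_{i^\circ}) = s\varphi^{(s,t)}$ and $t^\varphi_{i^\circ+1} = \varphi(s^\varphi_{i^\circ}, t^\varphi_{i^\circ}) t = \varphi^{(s,t)} t$.

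To prove $(s\varphi^{(s,t)})^\ast = \varphi^{(s,t)}$, I sandwich from both sides. For the inequality $(s\varphi^{(s,t)})^\ast \leq \varphi^{(s,t)}$: each $\varphi(s^\varphi_i, t^\varphi_i) = ({s^\varphi_i}^\ast t^\varphi_i)^+$ is an idempotent, hence so is the limit $\varphi^{(s,t)}$; therefore $s\varphi^{(s,t)} \cdot \varphi^{(s,t)} = s\varphi^{(s,t)}$, placing $\varphi^{(s,t)}$ inside $\varphi^\ast(s\varphi^{(s,t)})$. Since $S$ is singleton-rich, the kernel $(s\varphi^{(s,t)})^{\ast\ast} = \{(s\varphi^{(s,t)})^\ast\}$ absorbs the idempotent $\varphi^{(s,t)}$ on both sides inside $\langle \varphi^\ast(s\varphi^{(s,t)}) \rangle$, which yields the claimed inequality in the natural order on idempotents. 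For the reverse inequality, apply Lemma~\ref{sirtil}(1) at index $i^\circ+1$: $\varphi(s^\varphi_{i^\circ+1}, t^\varphi_{i^\circ+1}) \leq {s^\varphi_{i^\circ+1}}^\ast$, whose left-hand side equals $\varphi^{(s,t)}$ by Lemma~\ref{ConvergephiAndpsi} and whose right-hand side equals $(s\varphi^{(s,t)})^\ast$ by the identification above.

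Having $(s\varphi^{(s,t)})^\ast = \varphi^{(s,t)}$ in hand, I then unpack $\varphi(s^\varphi_{i^\circ+1}, t^\varphi_{i^\circ+1}) = ({s^\varphi_{i^\circ+1}}^\ast t^\varphi_{i^\circ+1})^+$ using the three substitutions $s^\varphi_{i^\circ+1} = s\varphi^{(s,t)}$, $t^\varphi_{i^\circ+1} = \varphi^{(s,t)} t$, and ${s^\varphi_{i^\circ+1}}^\ast = \varphi^{(s,t)}$, together with idempotence of $\varphi^{(s,t)}$, to collapse the expression to $(\varphi^{(s,t)} t)^+$; since the left-hand side again equals $\varphi^{(s,t)}$ by convergence, this gives $(\varphi^{(s,t)} t)^+ = \varphi^{(s,t)}$. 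The one thing to watch is indexing discipline: every step must be evaluated at $i^\circ + 1$ rather than $i^\circ$, so that both the element $\varphi(s^\varphi_{i^\circ+1}, t^\varphi_{i^\circ+1})$ and the sequence entries $s^\varphi_{i^\circ+1}, t^\varphi_{i^\circ+1}$ have already stabilized and can be rewritten in closed form via Lemma~\ref{sirtil}(3). Beyond this bookkeeping I do not anticipate any real obstacle; the $\psi$-equalities are a mirror of the above using Lemma~\ref{sirtil}(4) and the dual of Lemma~\ref{sirtil}(1).
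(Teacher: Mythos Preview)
Your proposal is correct and follows essentially the same approach as the paper: both arguments pick the stabilization index $i^{\circ}$ from Lemma~\ref{ConvergephiAndpsi}, use Lemma~\ref{sirtil}(3) to rewrite $s^{\varphi}_{i^{\circ}+1}=s\varphi^{(s,t)}$ and $t^{\varphi}_{i^{\circ}+1}=\varphi^{(s,t)}t$, and then sandwich $(s\varphi^{(s,t)})^{\ast}$ between $\varphi^{(s,t)}$ using Lemma~\ref{sirtil}(1) and the trivial inequality coming from $\varphi^{(s,t)}\in\varphi^{\ast}(s\varphi^{(s,t)})$. The only cosmetic difference is that for the equality $(\varphi^{(s,t)}t)^{+}=\varphi^{(s,t)}$ the paper works at index $i^{\circ}$ directly (reading off $({s^{\varphi}_{i^{\circ}}}^{\ast}t^{\varphi}_{i^{\circ}})^{+}=(t^{\varphi}_{i^{\circ}+1})^{+}=(\varphi^{(s,t)}t)^{+}$), whereas you work at index $i^{\circ}+1$ and feed in the first equality you just proved; both routes are valid and equally short.
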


\begin{proof}
By Lemma~\ref{ConvergephiAndpsi},
there exist an integers $i^{\circ}$
such that 
$\varphi(s^{\varphi}_i,t^{\varphi}_i) = \varphi^{(s,t)}$, for every $i \geq i^{\circ}$.

Since
\[
\varphi^{(s,t)}
=
\varphi(s^{\varphi}_{i^{\circ}+1},t^{\varphi}_{i^{\circ}+1})
=
({s^{\varphi}_{i^{\circ}+1}}^{\ast}t^{\varphi}_{i^{\circ}+1})^{+}
\leq
{s^{\varphi}_{i^{\circ}+1}}^{\ast}
=
(s\varphi(s^{\varphi}_{i^{\circ}},t^{\varphi}_{i^{\circ}}))^{\ast}
=
(s\varphi^{(s,t)})^{\ast}
\]
and
$
(s\varphi^{(s,t)})^{\ast}
\leq
\varphi^{(s,t)}$,
we have $(s\varphi^{(s,t)})^{\ast} = \varphi^{(s,t)}$.

Also, we have
\[
\varphi^{(s,t)} 
= 
\varphi(s^{\varphi}_{i^{\circ}},t^{\varphi}_{i^{\circ}})
= 
({s^{\varphi}_{i^{\circ}}}^{\ast}t^{\varphi}_{i^{\circ}})^{+}
= 
(t^{\varphi}_{i^{\circ}+1})^{+}
= 
(\varphi(s^{\varphi}_{i^{\circ}},t^{\varphi}_{i^{\circ}})t)^{+}
= 
(\varphi^{(s,t)}t)^{+}.
\]

Similarity, we get that
$(s\psi^{(s,t)})^{\ast} = (\psi^{(s,t)}t)^{+} = \psi^{(s,t)}$.
\end{proof}

By Lemma~\ref{phiplusstarequal}, we derive the following corollary.

\begin{cor}\label{phiplusstarequalCor}
We have 
\(\varphi^{(s\varphi^{(s,t)}, \varphi^{(s,t)}t)}=\varphi^{(s,t)}\ 
\text{and}\
\psi^{(s\psi^{(s,t)},\psi^{(s,t)}t)} = \psi^{(s,t)}\), for every $s,t$ in $S$.
\end{cor}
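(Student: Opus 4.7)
The plan is to unpack the definition of $\varphi^{(s',t')}$ for the pair $(s',t') := (s\varphi^{(s,t)},\varphi^{(s,t)}t)$ and show that the defining sequence is already stationary at index $0$ with value $e := \varphi^{(s,t)}$; the analogous argument (dually, with $\psi$) will handle the second equality.

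Concretely, I would first compute $\varphi(s',t')$ by definition:
\[
\varphi(s',t') = \bigl((s')^{\ast}t'\bigr)^{+} = \bigl((s\varphi^{(s,t)})^{\ast}\,\varphi^{(s,t)}t\bigr)^{+}.
\]
By Lemma~\ref{phiplusstarequal}, $(s\varphi^{(s,t)})^{\ast} = e$ and $(\varphi^{(s,t)}t)^{+} = e$, so the expression simplifies to $(e \cdot e t)^{+} = (e t)^{+} = e$. Thus $\varphi(s',t') = e = \varphi^{(s,t)}$.

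Next, I would verify that the sequences $s^{\varphi}_i, t^{\varphi}_i$ starting from $(s^{\varphi}_0, t^{\varphi}_0) = (s', t')$ are constant. Indeed,
\[
s^{\varphi}_1 = s'\,\varphi(s',t') = s\varphi^{(s,t)} \cdot e = s\varphi^{(s,t)} = s',
\]
\[
t^{\varphi}_1 = (s')^{\ast}t' = e \cdot \varphi^{(s,t)}t = \varphi^{(s,t)}t = t',
\]
again invoking Lemma~\ref{phiplusstarequal}. By induction (or simply by the recursive definition), $s^{\varphi}_i = s'$ and $t^{\varphi}_i = t'$ for all $i$, so the common limit is $\varphi(s',t') = e = \varphi^{(s,t)}$, giving the first identity.

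The second identity is symmetric: set $f := \psi^{(s,t)}$, apply Lemma~\ref{phiplusstarequal} to get $(sf)^{\ast} = f = (ft)^{+}$, compute $\psi(sf, ft) = (sf \cdot (ft)^{+})^{\ast} = (sf \cdot f)^{\ast} = (sf)^{\ast} = f$, and observe that the defining sequence for $\psi^{(sf, ft)}$ is therefore constant. Since every step is essentially a one-line substitution using Lemma~\ref{phiplusstarequal}, there is no real obstacle here; the only thing to be careful about is checking that the recursion for $s^{\psi}_i, t^{\psi}_i$ truly stabilizes at index $0$ with value $f$, which is immediate from the same two identities.
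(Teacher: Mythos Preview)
Your proof is correct and follows precisely the route the paper intends: the paper simply states that the corollary follows from Lemma~\ref{phiplusstarequal}, and your argument is the natural unpacking of that citation, showing the defining sequences for $\varphi^{(s',t')}$ and $\psi^{(s',t')}$ are stationary at index $0$ using exactly the identities $(s\varphi^{(s,t)})^{\ast}=(\varphi^{(s,t)}t)^{+}=\varphi^{(s,t)}$ (and their $\psi$-analogues).
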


\begin{lem}\label{ephiAndpsiECom}
Let $s,t\in S$ with $s_i^{\ast}t_i^{+}=t_i^{+}s_i^{\ast}$, for every $0\leq i$.
We have $\varphi^{(s,t)} = \psi^{(s,t)} = \varepsilon^{(s,t)}$.
\end{lem}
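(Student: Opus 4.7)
The plan is to show that, under the commutativity hypothesis, the $\varphi$-sequence tracks the even-indexed entries of the $(s_i, t_i)$-sequence used to define $\varepsilon^{(s,t)}$, while the $\psi$-sequence tracks the odd-indexed ones, so all three procedures converge to the same idempotent. Concretely, I would establish by induction on $i \geq 1$ the identities $s^{\varphi}_i = s_{2i}$, $t^{\varphi}_i = t_{2i-1}$, $s^{\psi}_i = s_{2i-1}$, $t^{\psi}_i = t_{2i}$, where $(s_i, t_i)$ is the sequence from the definition of $\varepsilon^{(s,t)}$.

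The base case is a direct computation using Lemma~\ref{snrtnl}: $s^{\varphi}_1 = s\varphi(s,t) = s(s^{\ast}t)^{+} = s t_1^{+}$, while Lemma~\ref{snrtnl}(2) gives $s_2 = s_1 t_1^{+} = s t^{+} t_1^{+} = s t_1^{+}$ once we use $t_1^{+} \leq t^{+}$ from Lemma~\ref{snrtnl}(1) and commutativity; likewise $t^{\varphi}_1 = s^{\ast}t = t_1$. The inductive step rests on two algebraic collapses: first, $t_{2i+1} = s_{2i}^{\ast} t_{2i} = s_{2i}^{\ast} s_{2i-1}^{\ast} t_{2i-1} = s_{2i}^{\ast} t_{2i-1}$ via $s_{2i}^{\ast} \leq s_{2i-1}^{\ast}$; second, $s_{2i+2} = s_{2i+1} t_{2i+1}^{+} = s_{2i} t_{2i}^{+} t_{2i+1}^{+} = s_{2i} t_{2i+1}^{+}$ via $t_{2i+1}^{+} \leq t_{2i}^{+}$. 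Together these yield $s^{\varphi}_{i+1} = s_{2i}(s_{2i}^{\ast} t_{2i-1})^{+} = s_{2i} t_{2i+1}^{+} = s_{2i+2}$, and analogously $t^{\varphi}_{i+1} = t_{2i+1}$; the $\psi$-side is dual.

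With the subsequence identifications in hand, I would then pass to the limit. Let $N$ be the stabilization index of $(s_i, t_i)$. The commutativity hypothesis keeps the recursion on its nontrivial branch, so $s_{N+1} = s_N$ and $t_{N+1} = t_N$ unfold to $s_N t_N^{+} = s_N$ and $s_N^{\ast} t_N = t_N$, forcing $s_N^{\ast} \leq t_N^{+}$ and $t_N^{+} \leq s_N^{\ast}$, hence $s_N^{\ast} = t_N^{+} = \varepsilon^{(s,t)} =: e$. Moreover, $s_N = s_{N+1} = s t_N^{+} = se$ by Lemma~\ref{snrtnl}(2). Passing to the limit in $s^{\varphi}_i = s_{2i}$ yields $s\varphi^{(s,t)} = se$, and Lemma~\ref{phiplusstarequal} closes the loop:
\[\varphi^{(s,t)} = (s\varphi^{(s,t)})^{\ast} = (se)^{\ast} = s_N^{\ast} = e = \varepsilon^{(s,t)}.\]
The dual chain with $t^{\psi}_i = t_{2i}$ and $\psi^{(s,t)} = (\psi^{(s,t)} t)^{+}$ gives $\psi^{(s,t)} = (et)^{+} = t_N^{+} = e$.

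The main obstacle is the inductive subsequence identification, where all the combinatorial work lives. The delicate point is to simultaneously exploit the monotonicity $s_{i+1}^{\ast}, t_{i+1}^{+} \leq s_i^{\ast}, t_i^{+}$ from Lemma~\ref{snrtnl}(1) and the commutativity hypothesis to rearrange and collapse products such as $t_{2i}^{+} t_{2i+1}^{+}$ or to swap $s^{\ast}$'s past $t^{+}$'s at every index; without commutativity the $\varphi$- and $\psi$-traversals would no longer interleave with $(s_i, t_i)$ and the three quantities would in general differ, which matches precisely the role of the hypothesis.
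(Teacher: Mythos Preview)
Your proof is correct and follows essentially the same approach as the paper: both establish by induction that the $\varphi$-sequence (resp.\ $\psi$-sequence) is a subsequence of $(s_i,t_i)$, then conclude via Lemma~\ref{phiplusstarequal}. The only minor differences are that the paper tracks just $s_{2i}=s^{\varphi}_i$ and $t_{2i}=t^{\psi}_i$ and invokes Lemma~\ref{sirtil}(5) for the collapsing step, whereas you additionally track $t^{\varphi}_i=t_{2i-1}$, $s^{\psi}_i=t_{2i-1}$ and perform the collapsing directly via the monotonicity in Lemma~\ref{snrtnl}(1); you also spell out the limit argument that the paper leaves implicit.
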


\begin{proof}
We prove that $s_{2i}=s^{\varphi}_i$ and $t_{2i}=t^{\psi}_i$, by induction on $i$.
The case $i = 0$ is clear.

Assume then that $i > 0$, and that the result holds for smaller values of $i$.
By using Lemma~\ref{snrtnl}.(2), we have 
\[s_{2i}=st_{2i-1}^{+}=s(s_{2i-2}^{\ast}t)^{+}=s({s^{\varphi}_{i-1}}^{\ast}t)^{+}=s\varphi(s^{\varphi}_{i-1},t).\]
By Lemma~\ref{sirtil}.(5), as $\varphi(s^{\varphi}_{i-1},t)=\varphi(s^{\varphi}_{i-1},t^{\varphi}_{i-1})$, we have $s_{2i}=s^{\varphi}_i$.

Similarly, we have $t_{2i}=t^{\psi}_i$.

Now, the result follows by Lemma~\ref{phiplusstarequal}.
\end{proof}

Note that if $S \in \pv{ECom}$, Lemma~\ref{ephiAndpsiECom} holds for every $s,t\in S$. However, if $S \not\in \pv{ECom}$, it is possible that $\varphi^{(s,t)} \neq \psi^{(s,t)}$ (see Example~\ref{Ex012}). 

\begin{lem}\label{sftphi}
Let $s,t\in S$ and $e,f \in E(S)$ with \(t^{+} \leq e\) and \(s^{\ast} \leq f\).
We have \[\varphi^{(s,t)} = \varphi^{(s,ft)} \ \text{and}\ \psi^{(s,t)} = \psi^{(se,t)}.\]
\end{lem}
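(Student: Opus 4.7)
The plan is to show that the two sequences defining $\varphi^{(s,t)}$ and $\varphi^{(s,ft)}$ (respectively $\psi^{(s,t)}$ and $\psi^{(se,t)}$) coincide from index $1$ onwards, so that they necessarily converge to the same idempotent. The whole argument rests on the elementary observation that for idempotents, $s^{\ast}\leq f$ means $s^{\ast}f = s^{\ast}$, and dually $t^{+}\leq e$ means $et^{+} = t^{+}$.

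For the first identity, I would proceed as follows. Let $(s^{\varphi}_i, t^{\varphi}_i)$ be the sequence associated to $(s,t)$, and denote by $(\widetilde{s}^{\varphi}_i, \widetilde{t}^{\varphi}_i)$ the sequence associated to $(s, ft)$. Using $s^{\ast}f = s^{\ast}$, I compute
\[
\varphi(\widetilde{s}^{\varphi}_0, \widetilde{t}^{\varphi}_0) = (s^{\ast}\cdot ft)^{+} = (s^{\ast}t)^{+} = \varphi(s^{\varphi}_0, t^{\varphi}_0).
\]
Consequently,
\[
\widetilde{s}^{\varphi}_1 = s\,\varphi(s,ft) = s\,\varphi(s,t) = s^{\varphi}_1,
\qquad
\widetilde{t}^{\varphi}_1 = s^{\ast}(ft) = s^{\ast}t = t^{\varphi}_1.
\]
Since the recursion $s^{\varphi}_{i+1}=s^{\varphi}_i\varphi(s^{\varphi}_i,t^{\varphi}_i)$, $t^{\varphi}_{i+1}=(s^{\varphi}_i)^{\ast}t^{\varphi}_i$ is purely a function of the previous pair, a straightforward induction gives $\widetilde{s}^{\varphi}_i = s^{\varphi}_i$ and $\widetilde{t}^{\varphi}_i = t^{\varphi}_i$ for every $i\geq 1$. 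Applying Lemma~\ref{ConvergephiAndpsi} to both sequences, their limits coincide, so $\varphi^{(s,t)} = \varphi^{(s,ft)}$.

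The second identity is entirely symmetric. With $(\widetilde{s}^{\psi}_i, \widetilde{t}^{\psi}_i)$ the $\psi$-sequence for $(se,t)$, the hypothesis $t^{+}\leq e$ yields $et^{+} = t^{+}$, hence
\[
\psi(\widetilde{s}^{\psi}_0, \widetilde{t}^{\psi}_0) = (se\cdot t^{+})^{\ast} = (st^{+})^{\ast} = \psi(s^{\psi}_0, t^{\psi}_0),
\]
and then
\[
\widetilde{s}^{\psi}_1 = (se)t^{+} = st^{+} = s^{\psi}_1,
\qquad
\widetilde{t}^{\psi}_1 = \psi(se,t)t = \psi(s,t)t = t^{\psi}_1.
\]
Induction again propagates the equality of both sequences for all $i\geq 1$, and Lemma~\ref{ConvergephiAndpsi} forces $\psi^{(s,t)} = \psi^{(se,t)}$.

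There is no real obstacle here: the only subtlety is noticing that a single $f$ (or $e$) placed between $s^{\ast}$ and $t$ (respectively between $s$ and $t^{+}$) is absorbed by the idempotent ordering at the very first step, after which the two recursions are literally identical. No further properties of the semigroup beyond the singleton-rich assumption are used.
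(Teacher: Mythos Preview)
Your proof is correct and follows essentially the same approach as the paper: use $s^{\ast}\leq f$ (so $s^{\ast}f=s^{\ast}$) to show that the two $\varphi$-sequences agree at index~$1$, after which they coincide for all larger indices and hence have the same limit; the $\psi$-case is handled symmetrically via $et^{+}=t^{+}$. The paper's write-up is simply more terse, omitting the explicit induction and the reference to Lemma~\ref{ConvergephiAndpsi}.
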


\begin{proof}
Let \(s^{\varphi}_0 = s, t^{\varphi}_0 =  t, {s'}^{\varphi}_0 = s\), and \({t'}^{\varphi}_0 =  ft\).
As \(s^{\ast} \leq f\), we have \(s^{\varphi}_1=s(s^{\ast}t)^{+}=s(s^{\ast}ft)^{+}={s'}^{\varphi}_1\) and \(t^{\varphi}_1=s^{\ast}t=s^{\ast}ft={t'}^{\varphi}_1\).
It follows that \(\varphi^{(s,t)} = \varphi^{(s,ft)}\).

Similarly, we have \(\psi^{(s,t)} = \psi^{(se,t)}\).
\end{proof}

In Lemma~\ref{sftphi}, it is noteworthy that 
\(\varphi^{(s,t)}\) might not equal \(\varphi^{(se,t)}\) 
or 
\(\psi^{(s,t)}\) might differ from \(\psi^{(s,ft)}\) (see Example~\ref{Ex013}).

By Lemma~\ref{sftphi}, the following corollary holds.

\begin{cor}\label{sPrimeAndsPrimePrime1} %lll49 = DF21.EpsilonsprimeAndsFi(n, StBSS, ES, OrderES);
Let \(s' \ll s\) and \(t' \ll t\) with \({s'}^{\ast} = {t'}^{+}\).
We have \[\varphi^{(s',t{t'}^{\ast})} = \psi^{({s'}^{+}s,t')} = {s'}^{\ast}.\]
\end{cor}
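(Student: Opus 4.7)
The plan is to exploit Lemma~\ref{sftphi} to reduce both $\varphi^{(s', t{t'}^{\ast})}$ and $\psi^{({s'}^{+}s, t')}$ to invariants of the single pair $(s', t')$, and then to observe that for this pair the recursively defined sequences are constant from the first step. The three hypotheses $s' \ll s$, $t' \ll t$ and ${s'}^{\ast} = {t'}^{+}$ are chosen precisely to make this reduction clean.

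First, I would record that $s' \ll s$ and $t' \ll t$ give $s' = {s'}^{+} s {s'}^{\ast}$ and $t' = {t'}^{+} t {t'}^{\ast}$, so together with ${s'}^{\ast} = {t'}^{+}$ one has the two identities ${s'}^{\ast}\cdot(t{t'}^{\ast}) = {t'}^{+} t {t'}^{\ast} = t'$ and $({s'}^{+}s)\cdot{s'}^{\ast} = s'$. Applying Lemma~\ref{sftphi} with $f = {s'}^{\ast}$ to the pair $(s', t{t'}^{\ast})$ (valid since $(s')^{\ast} = {s'}^{\ast} \leq {s'}^{\ast}$) yields $\varphi^{(s', t{t'}^{\ast})} = \varphi^{(s', t')}$, and the dual application with $e = {s'}^{\ast}$ (valid since $(t')^{+} = {s'}^{\ast}$) yields $\psi^{({s'}^{+}s, t')} = \psi^{(s', t')}$.

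Second, I would compute the two starting values directly at $(s', t')$. Using ${s'}^{\ast} = {t'}^{+}$ together with ${t'}^{+} t' = t'$ and $s' {s'}^{\ast} = s'$, one gets $\varphi(s', t') = ({s'}^{\ast} t')^{+} = (t')^{+} = {s'}^{\ast}$ and dually $\psi(s', t') = (s'(t')^{+})^{\ast} = (s' {s'}^{\ast})^{\ast} = (s')^{\ast} = {s'}^{\ast}$. Then Lemma~\ref{sirtil}(3) gives $s^{\varphi}_1 = s' \varphi(s',t') = s' {s'}^{\ast} = s'$ and $t^{\varphi}_1 = \varphi(s',t') t' = {s'}^{\ast} t' = t'$, so the $\varphi$-sequence is stationary at $(s', t')$ and therefore $\varphi^{(s', t')} = {s'}^{\ast}$; the analogous argument using Lemma~\ref{sirtil}(4) yields $\psi^{(s', t')} = {s'}^{\ast}$. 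I do not anticipate any real obstacle here: once the reduction to $(s', t')$ via Lemma~\ref{sftphi} is in place, the remaining verification is a routine unwinding of the definitions.
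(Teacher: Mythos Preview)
Your proof is correct and follows essentially the same route as the paper, which simply declares the corollary to be a consequence of Lemma~\ref{sftphi}. You have just spelled out explicitly the reduction via Lemma~\ref{sftphi} to the pair $(s',t')$ and the immediate stationarity of the $\varphi$- and $\psi$-sequences there, which is exactly the intended argument.
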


\begin{lem}\label{phittP} %int jj7 = DF21.phiDegreeOneSpecialCase4(n, StBSS, ES, OrderES);
Let \(s'\ll s\) and \(t'\ll t\).
If \(\varphi({s'}^{+}s,t{t'}^{\ast}) = {t'}^{+}\) then \(\varphi({s'}^{+}s,t') = {t'}^{+}\), 
and
if \(\psi({s'}^{+}s,t{t'}^{\ast}) = {s'}^{\ast}\) then \(\psi(s',t{t'}^{\ast}) = {s'}^{\ast}\).
\end{lem}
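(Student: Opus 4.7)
The plan is to first extract a one-step comparability valid for all $a,b\in S$: namely $\varphi(a,b)\le a^\ast$ and, dually, $\psi(a,b)\le b^+$. This is the argument hidden in Lemma~\ref{sirtil}(1) at step $i=0$: since $a^\ast(a^\ast b) = a^\ast b$, the idempotent $a^\ast$ lies in $\langle\varphi^+(a^\ast b)\rangle$, and by singleton-richness the unique element $(a^\ast b)^+$ of the kernel of this submonoid satisfies $(a^\ast b)^+\cdot a^\ast = a^\ast\cdot(a^\ast b)^+ = (a^\ast b)^+$. I would record this observation once, then use it twice.

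For the first implication, applying $\varphi(a,b)\le a^\ast$ with $a={s'}^{+}s$ and $b=t{t'}^{\ast}$ converts the hypothesis $\varphi({s'}^{+}s,t{t'}^{\ast}) = {t'}^{+}$ into ${t'}^{+}\le ({s'}^{+}s)^{\ast}$, hence $({s'}^{+}s)^{\ast}{t'}^{+}={t'}^{+}$. Combined with the decomposition $t' = {t'}^{+} t {t'}^{\ast}$ coming from $t'\ll t$ (Proposition~\ref{lllES}(1)), a direct computation gives
\[
\varphi({s'}^{+}s, t') = \bigl(({s'}^{+}s)^{\ast}{t'}^{+}t{t'}^{\ast}\bigr)^{+} = \bigl({t'}^{+}t{t'}^{\ast}\bigr)^{+} = (t')^{+} = {t'}^{+}.
\]

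For the second implication, the dual inequality $\psi(a,b)\le b^{+}$ at $a={s'}^{+}s$, $b=t{t'}^{\ast}$ turns the hypothesis $\psi({s'}^{+}s,t{t'}^{\ast}) = {s'}^{\ast}$ into ${s'}^{\ast}\le (t{t'}^{\ast})^{+}$, so ${s'}^{\ast}(t{t'}^{\ast})^{+}={s'}^{\ast}$. Using $s' = {s'}^{+} s {s'}^{\ast}$ from $s'\ll s$,
\[
\psi(s', t{t'}^{\ast}) = \bigl({s'}^{+}s{s'}^{\ast}(t{t'}^{\ast})^{+}\bigr)^{\ast} = \bigl({s'}^{+}s{s'}^{\ast}\bigr)^{\ast} = (s')^{\ast} = {s'}^{\ast}.
\]

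I do not expect a real obstacle. The one-step comparability $\varphi(a,b)\le a^{\ast}$ is the only non-cosmetic ingredient, and once it is in hand both implications reduce to short rewritings that exploit the identity $ef=f$ whenever $f\le e$ among idempotents, together with the $\ll$-decompositions of $s'$ and $t'$. The mildly subtle point is purely stylistic: deciding whether to isolate $\varphi(a,b)\le a^\ast$ as a preliminary lemma or to fold its one-line proof into the argument here.
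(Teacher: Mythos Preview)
Your proof is correct and is essentially the same argument as the paper's. Both hinge on the single fact that, writing $e=({s'}^{+}s)^{\ast}$, the hypothesis forces ${t'}^{+}\le e$ (equivalently $e{t'}^{+}={t'}^{+}$); you package this as the general inequality $\varphi(a,b)\le a^{\ast}$ from Lemma~\ref{sirtil}(1), while the paper derives it inline and then rewrites $e\,t{t'}^{\ast}=e\,t'$ before taking $(\cdot)^{+}$, but the content is identical (and dually for the $\psi$-statement).
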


\begin{proof}
Suppose that \((({s'}^{+}s)^{\ast}t{t'}^{\ast})^{+} = {t'}^{+}\).
As \(({s'}^{+}s)^{\ast} \in \varphi^{+}(({s'}^{+}s)^{\ast}t{t'}^{\ast})\) and \((({s'}^{+}s)^{\ast}t{t'}^{\ast})^{+} = {t'}^{+}\), 
we have 
\[
({s'}^{+}s)^{\ast}t{t'}^{\ast}
=
{t'}^{+}({s'}^{+}s)^{\ast}t{t'}^{\ast}
=
({s'}^{+}s)^{\ast}{t'}^{+}t{t'}^{\ast}
=
({s'}^{+}s)^{\ast}t'.
\]
It follows that \((({s'}^{+}s)^{\ast}t')^{+} = {t'}^{+}\).

Similarly, the second statement holds.
%Now, suppose that \(\psi({s'}^{+}s,t{t'}^{\ast}) = {s'}^{\ast}\).
%Hence, we have \(({s'}^{+}s(t{t'}^{\ast})^{+})^{\ast} = {s'}^{\ast}\).
%As \((t{t'}^{\ast})^{+} \in \varphi^{\ast}({s'}^{+}s(t{t'}^{\ast})^{+})\) and \(({s'}^{+}s(t{t'}^{\ast})^{+})^{\ast} = {s'}^{\ast}\), 
%we have 
%\[
%{s'}^{+}s(t{t'}^{\ast})^{+}
%=
%{s'}^{+}s(t{t'}^{\ast})^{+}{s'}^{\ast}
%=
%{s'}^{+}s{s'}^{\ast}(t{t'}^{\ast})^{+}
%=
%s'(t{t'}^{\ast})^{+}.
%\]
%It follows that \((s'(t{t'}^{\ast})^{+})^{\ast} = {s'}^{\ast}\).
\end{proof}

Note that in Lemma~\ref{phittP}, it may be that \(\varphi({s'}^{+}s,t') = {t'}^{+}\) and \(\varphi({s'}^{+}s,t{t'}^{\ast}) \neq {t'}^{+}\) (see Example~\ref{Ex014}). Additionally, the converse of the lemma may not hold true for the function \(\psi\).

%\begin{lem}\label{phittP2} %int jj7 = DF21.phiDegreeOneSpecialCase4(n, StBSS, ES, OrderES);
%Suppose that the relation \(\ll\) is transitive.
%Let \(s'\ll s\) and \(t'\ll t_2\ll t_1\).
%If \(\varphi({s'}^{+}s,t_1{t'}^{\ast}) = {t'}^{+}\) then \(\varphi({s'}^{+}s,t_2{t'}^{\ast}) = {t'}^{+}\), 
%%and
%%if \(\psi({s'}^{+}s,t{t'}^{\ast}) = {s'}^{\ast}\) then \(\psi(s',t{t'}^{\ast}) = {s'}^{\ast}\).
%\end{lem}
%
%\begin{proof}
%First, suppose that \((({s'}^{+}s)^{\ast}t_1{t'}^{\ast})^{+} = {t'}^{+}\).
%As \(({s'}^{+}s)^{\ast} \in \varphi^{+}(({s'}^{+}s)^{\ast}t_1{t'}^{\ast})\) and \((({s'}^{+}s)^{\ast}t_1{t'}^{\ast})^{+} = {t'}^{+}\), 
%we have 
%\begin{align*}
%{t'}^{+}({s'}^{+}s)^{\ast}t_2{t'}^{\ast}
%&=
%({s'}^{+}s)^{\ast}{t'}^{+}t_2{t'}^{\ast}
%=
%({s'}^{+}s)^{\ast}t'
%=
%({s'}^{+}s)^{\ast}{t'}^{+}t_1{t'}^{\ast}
%=
%{t'}^{+}({s'}^{+}s)^{\ast}t_1{t'}^{\ast}\\
%&=
%({s'}^{+}s)^{\ast}t_1{t'}^{\ast},\\
%({s'}^{+}s)^{\ast}t_2{t'}^{\ast}
%&=
%({s'}^{+}s)^{\ast}t_2^{+}t_1t_2^{\ast}{t'}^{\ast}
%\end{align*}
%\bb{I could not prove}.
%
%Similarly, the second statement holds.
%\end{proof}

%%%%%%%%%%%%%%%%%%%%%%%%%%%%%%%%%%%%%%%%%%%%%%%%%%%%%%%%%%%%%%%%%%%%%%%%%%%%%%%%%%%%%%%%%%%%%%%%%%%%%%%%%%%%%%%%%%%%%%%%%%%%%%%%%%%%%%%%%%%%%%
%%%%%%%%%%%%%%%%%%%%%%%%%%%%%%%%%%%%%%%%%%%%%%%%%%%%%%%%%%%%%%%%%%%%%%%%%%%%%%%%%%%%%%%%%%%%%%%%%%%%%%%%%%%%%%%%%%%%%%%%%%%%%%%%%%%%%%%%%%%%%%

\section{Study of the determinant of $\ll$--transitive singleton-rich semigroups}\label{DetNotInEcom}

As mentioned in Section~\ref{llAndlll}, we consider a finite semigroup 
$S$ with the assumption that the semigroup algebra $\mathbb{C}S$ is a unital algebra. 
%Additionally, the subsets $s^{\ast\ast}$ and $s^{++}$ are singleton for every $s\in S$. 
In this section, we examine and study the determinant of $\ll$-transitive singleton-rich semigroups and compute the determinant of a class of these semigroups, which we subsequently define and refer to as $\ll$-smooth semigroups.

The following lemma could be useful for determining the $\ll$-transitive semigroups.

\begin{lem}
Suppose that $ef \in E(S)$, for every $e,f\in E(S)$. Then, the relation $\ll$ is transitive.
\end{lem}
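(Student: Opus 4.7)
The plan is to use Proposition~\ref{lllES}(1), which gives the alternative characterization $s \ll t$ iff $s \in E(S)^1 t E(S)^1$. Under the hypothesis that the product of any two idempotents of $S$ is again idempotent, the set $E(S)^1$ is closed under multiplication (the identity $1$ is absorbed trivially), and this closure is exactly what is needed to make the sandwich characterization transitive.

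Concretely, I would take $a,b,c \in S$ with $a \ll b$ and $b \ll c$, and apply Proposition~\ref{lllES}(1) twice to obtain elements $e_1, f_1, e_2, f_2 \in E(S)^1$ such that $a = e_1 b f_1$ and $b = e_2 c f_2$. Substituting gives
\[
a = e_1 e_2 \, c \, f_2 f_1.
\]
The hypothesis then forces $e_1 e_2 \in E(S)^1$ and $f_2 f_1 \in E(S)^1$ (if either factor equals the formal identity $1$, the product is simply the other factor, which already lies in $E(S)^1$; if both factors lie in $E(S)$, their product lies in $E(S)$ by assumption). Hence $a \in E(S)^1 c E(S)^1$, and a second application of Proposition~\ref{lllES}(1) yields $a \ll c$.

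The only substantive point is the observation that the hypothesis makes $E(S)^1$ a submonoid of $S^1$, which is precisely what makes the $E(S)^1$-sandwich characterization of $\ll$ transitive. There is no real obstacle here: once one has Proposition~\ref{lllES}(1) in hand, the argument is a one-line substitution, and the role of the hypothesis is solely to keep the outer idempotent factors inside $E(S)^1$ after composition.
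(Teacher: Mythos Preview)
Your proof is correct and follows essentially the same approach as the paper: both arguments write $a = e_1 e_2\, c\, f_2 f_1$ from the two $\ll$-relations, use the hypothesis to see that $e_1 e_2$ and $f_2 f_1$ are idempotent, and then invoke Proposition~\ref{lllES}(1). The paper merely frames this as a proof by contradiction and uses the definition $s = s^{+}s's^{\ast}$ in place of one direction of Proposition~\ref{lllES}(1), but the content is the same.
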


\begin{proof}
Suppose the contrary that the relation $\ll$ is not transitive. 
Then, there exists a sequences 
\(s\ll s'\ll s''\)
with \(s\not\ll  s''\).
Hence, we get that
\(
s=s^{+} {s'}^{+} s'' {s'}^{\ast} s^{\ast}.
\)
By the assumption of the lemma, the elements $s^{+} {s'}^{+}$ and ${s'}^{\ast} s^{\ast}$ are idempotent.
Then, we have $s^{+} {s'}^{+} \leq s^{+}$ and ${s'}^{\ast} s^{\ast} \leq s^{\ast}$ and, thus, $s \ll s''$.
A contradiction with the assumption.
\end{proof}

Note, that if $\ll$ is transitive, it may be the case that $S$ contains idempotent $e$ and $f$ such that $ef$ is not idempotent (see Example~\ref{Ex017}).

We define the multiplication
$\sharp \colon \mathbb{C}S \times \mathbb{C}S \rightarrow \mathbb{C}S$ 
as follows:\\
\begin{equation*}
s\sharp t = 
\begin{cases}
  st,& \text{if}\ s^{+}=(st)^{+}, t^{\ast}=(st)^{\ast}\ \text{and}\ s^{\ast}=t^{+};\\ %\text{and if}\ set=st,\\ &\text{for some idempotent}\ e,\ \text{then}\ s^{\ast}\leq e;\\
  0,& \text{otherwise},
\end{cases}
\end{equation*}
for every $s,t\in S$. 
Also, we define the multiplication
$\sharp\limits^{e} \colon \mathbb{C}S \times \mathbb{C}S \rightarrow \mathbb{C}S$, for some $e\in E(S)$,
as follows:\\
\begin{equation*}
s\sharp^{e} t = 
\begin{cases}
  st & \text{if}\ s^{+}=(st)^{+}, t^{\ast}=(st)^{\ast}\ \text{and}\ s^{\ast}=t^{+}=e;\\
  0& \text{otherwise}.
\end{cases}
\end{equation*}

According to Proposition~\ref{Z}, the mapping $Z$ is bijective. 
For the case the relation $\ll$ is transitive on $S$,
we define the following multiplication on $\mathbb{C}S \times \mathbb{C}S$
\begin{equation*}
Z(s)\boldsymbol{*}Z(t) = 
\sum\limits_{\substack{s'\ll s,\\ t'\ll t}}
s'\stackrel{\varphi^{({s'}^{+}s,t{t'}^{\ast})}}{\sharp} t'.
\end{equation*}
By applying M\"{o}bius inversion, we have 
\[
u= \displaystyle\sum\limits_{u'\ll u}\mu_S(u', u)Z(u')\
\text{and}\ 
v= \displaystyle\sum\limits_{v'\ll v}\mu_S(v', v)Z(v'),\]
 for every $u,v \in S$. 
Then \[u\boldsymbol{*}v=\displaystyle\sum\limits_{u'\ll u,v'\ll v}\mu_S(u', u)\mu_S(v', v)Z(u')\boldsymbol{*}Z(v').\]

\begin{prop}\label{s*tHomIsTransitive}
%Suppose that the relation $\ll$ on $S$ is transitive.
We have $Z(s)\boldsymbol{*}Z(t)= Z(st)$, for all $s,t\in S$.
\end{prop}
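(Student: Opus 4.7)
The plan is to verify the identity by establishing a bijection between the nonzero contributions to $Z(s)\boldsymbol{*}Z(t)$ and the summands of $Z(st)$. Since this section assumes $\ll$--transitivity, we have $\ll\,=\,\lll$, so both sides are sums over sets of the form $\{x : x \ll y\}$, and it suffices to show that the map $(s',t')\mapsto s't'$ on admissible pairs defines a bijection onto $\{u' : u'\ll st\}$ contributing coefficient~$1$ to each~$u'$.

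For the forward direction, I fix $s'\ll s$ and $t'\ll t$ for which $s'\stackrel{e}{\sharp}t'\neq 0$, where $e=\varphi^{({s'}^{+}s,t{t'}^{\ast})}$. The $\sharp^{e}$--conditions give ${s'}^{\ast}={t'}^{+}=e$, together with ${s'}^{+}=(s't')^{+}$ and ${t'}^{\ast}=(s't')^{\ast}$. Setting $u'=s't'$ and using $s'={s'}^{+}s{s'}^{\ast}$ and $t'={t'}^{+}t{t'}^{\ast}$, I rewrite $u'=(u')^{+}s\cdot e\cdot t(u')^{\ast}$. Lemma~\ref{ephiAndpsi} applied to the pair $((u')^{+}s, t(u')^{\ast})$ inserts $e$ between the two factors of $((u')^{+}s)(t(u')^{\ast})$, so $(u')^{+}\cdot st\cdot(u')^{\ast}=(u')^{+}s\cdot e\cdot t(u')^{\ast}=u'$, i.e.\ $u'\ll st$.

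For the backward direction, given $u'\ll st$, I construct the preimage by putting $e=\varphi^{((u')^{+}s,t(u')^{\ast})}$, $s'=(u')^{+}se$ and $t'=et(u')^{\ast}$. Lemma~\ref{eslls}(1) gives $s'\ll s$ and $t'\ll t$; Lemma~\ref{phiplusstarequal} applied to $((u')^{+}s, t(u')^{\ast})$ gives ${s'}^{\ast}={t'}^{+}=e$; and Lemma~\ref{ephiAndpsi} yields $s't'=(u')^{+}s\cdot e\cdot t(u')^{\ast}=(u')^{+}\cdot st\cdot(u')^{\ast}=u'$. The remaining $\sharp^{e}$--conditions ${s'}^{+}=(u')^{+}$ and ${t'}^{\ast}=(u')^{\ast}$ then follow by singleton-richness: from $(u')^{+}s'=s'$ one gets $(u')^{+}\in\varphi^{+}(s')$, hence ${s'}^{+}\leq(u')^{+}$, while from ${s'}^{+}u'={s'}^{+}s't'=u'$ one obtains the reverse inclusion; dually on the $\ast$--side. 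Uniqueness of the preimage is then immediate, since any admissible pair with $s't'=u'$ is forced by the $\sharp^{e}$--conditions to satisfy ${s'}^{+}=(u')^{+}$, ${t'}^{\ast}=(u')^{\ast}$, and thus $e={s'}^{\ast}={t'}^{+}=\varphi^{((u')^{+}s,t(u')^{\ast})}$, so $s'=(u')^{+}se$ and $t'=et(u')^{\ast}$ are determined.

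The main technical hurdle I expect is the two-sided comparison ${s'}^{+}=(u')^{+}$ (and its dual), which depends essentially on the fact that $\langle\varphi^{+}(x)\rangle$ has a singleton kernel for each $x\in S$; this is where the standing singleton-rich hypothesis is genuinely used. Everything else amounts to a careful orchestration of the identities $st=s\varphi^{(s,t)}t$ from Lemma~\ref{ephiAndpsi} and $(s\varphi^{(s,t)})^{\ast}=(\varphi^{(s,t)}t)^{+}=\varphi^{(s,t)}$ from Lemma~\ref{phiplusstarequal}.
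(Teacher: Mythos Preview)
Your proof is correct and follows essentially the same route as the paper's: establish a bijection between admissible pairs $(s',t')$ and elements $u'\ll st$, using Lemma~\ref{ephiAndpsi} to pass between $s't'$ and ${s'}^{+}st{t'}^{\ast}$, and Lemma~\ref{phiplusstarequal} to pin down ${s'}^{\ast}={t'}^{+}=\varphi^{((u')^{+}s,t(u')^{\ast})}$. Your treatment of the equalities ${s'}^{+}=(u')^{+}$ and ${t'}^{\ast}=(u')^{\ast}$ via the two-sided comparison is slightly more explicit than the paper's, but the argument is the same.
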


\begin{proof}
Let $s'\ll s$ and $t'\ll t$. 
The first step is to prove that $s't'\ll st$, if $s'\stackrel{\varphi^{({s'}^{+}s,t{t'}^{\ast})}}{\sharp} t'\neq 0$.
Then, the following conditions hold:
\begin{equation}\label{ConditionsOfs't'2}
{s'}^{+}=(s't')^{+}, {t'}^{\ast}=(s't')^{\ast}\ \text{and}\ {s'}^{\ast}={t'}^{+}.
\end{equation}
Since $s'\ll s$ and $t'\ll t$, we have $s' = {s'}^{+}s{s'}^{\ast}$ and $t' = {t'}^{+}t{t'}^{\ast}$.
Now, as $s'\stackrel{\varphi^{({s'}^{+}s,t{t'}^{\ast})}}{\sharp} t'\neq 0$, we have ${s'}^{\ast}={t'}^{+}=\varphi^{({s'}^{+}s,t{t'}^{\ast})}$.
Hence, by Lemma~\ref{ephiAndpsi}, we have 
\[s't' = {s'}^{+}s{s'}^{\ast}{t'}^{+}t{t'}^{\ast} = {s'}^{+}s\varphi^{({s'}^{+}s,t{t'}^{\ast})}t{t'}^{\ast} = {s'}^{+}st{t'}^{\ast}.\]
As ${s'}^{+}=(s't')^{+}$ and ${t'}^{\ast}=(s't')^{\ast}$, we have $s't'\ll st$.

Secondly, we need to show that if $u\ll st$, for some $u$ in $S$, there exists a unique pair $(s^{\circ},t^{\circ})$ such that $s^{\circ}\ll s$, $t^{\circ}\ll t$,  $s^{\circ}\stackrel{\varphi^{({s^{\circ}}^{+}s,t{t^{\circ}}^{\ast})}}{\sharp} t^{\circ}\neq 0$, and $s^{\circ}t^{\circ}=u$.
Let $\varphi = \varphi^{(u^{+} s ,t u^{\ast})}$, 
$s_1= u^{+} s \varphi$,
and
$t_1= \varphi t u^{\ast}$. 
Since
\(
u^{+}s \varphi tu^{\ast}
=
u
,
\)
it is easily follows that $s_1^{+} = u^{+}$ and $t_1^{\ast} = u^{\ast}$.
By Lemma~\ref{phiplusstarequal}, we deduce $s_1^{\ast}=t_1^{+}=\varphi$.
Then, we get that
\(s_1 \ll s\ \text{and}\
  t_1 \ll t.\)
Also, we have 
\begin{align*}
s_1
\stackrel{\varphi^{
(
s_1^{+}s,
tt_1^{\ast}
)
}}{\sharp}
t_1
&
=
s_1
\stackrel{\varphi^{
(
u^{+}s,
tu^{\ast}
)
}}{\sharp}
t_1
=
s_1
\stackrel{\varphi}{\sharp}
t_1
=
s_1t_1
=
u.
\end{align*}

There is then our desired pair. 
Now, we prove the uniqueness of this existence. 
Let $s_1,s_2,t_1,t_2\in S$ such that $s_1,s_2\ll s$, $t_1,t_2\ll t$, $s_1t_1=s_2t_2=u$, and 
$s_1\stackrel{\varphi^{({s_1}^{+}s,t{t_1}^{\ast})}}{\sharp} t_1,s_2\stackrel{\varphi^{({s_2}^{+}s,t{t_2}^{\ast})}}{\sharp} t_2\neq 0$.
Then, the pairs $(s_1,t_1)$ and $(s_2,t_2)$ satisfy conditions~(\ref{ConditionsOfs't'2}) and, thus, we have
 $s_1^{+}=s_2^{+}=u^{+}$ and $t_1^{\ast}=t_2^{\ast}=u^{\ast}$. 
Also, we have $s_1^{\ast}=s_2^{\ast}=t_1^{+}=t_2^{+}=\varphi^{(u^{+}s,tu^{\ast})}$.
Therefore, we have $s_1=u^{+}s\varphi^{(u^{+}s,tu^{\ast})}=s_2$ and $t_1=\varphi^{(u^{+}s,tu^{\ast})}su^{\ast}=t_2$. 

The result follows.
\end{proof}

\begin{thm}\label{Zisom}
Suppose that the relation $\ll$ on $S$ is transitive.
The mapping $Z$ is an isomorphism of $\mathbb{C}$-algebras. 
\end{thm}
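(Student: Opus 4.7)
The plan is to bootstrap from two facts already established in the excerpt: Proposition~\ref{Z} gives that $Z$ is a $\mathbb{C}$-linear bijection of $\mathbb{C}S$, and Proposition~\ref{s*tHomIsTransitive} gives the basis-level multiplicativity $Z(s)\boldsymbol{*}Z(t) = Z(st)$ for all $s,t\in S$. Since $S$ is a basis of $\mathbb{C}S$ and $Z$ is a $\mathbb{C}$-linear bijection, the image $\{Z(s):s\in S\}$ is also a basis, so the operation $\boldsymbol{*}$, defined on pairs from this image, extends uniquely to a bilinear operation on $\mathbb{C}S\times\mathbb{C}S$; this extension is precisely the one recorded in the paper via M\"{o}bius inversion just before the statement of the theorem.

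By bilinearity, the identity $Z(s)\boldsymbol{*}Z(t)=Z(st)$ for $s,t\in S$ promotes to $Z(a)\boldsymbol{*}Z(b)=Z(ab)$ for all $a,b\in\mathbb{C}S$, where $ab$ denotes the usual semigroup-algebra product. Associativity of $\boldsymbol{*}$ is then transferred directly from associativity of the ordinary product: for any $a,b,c\in\mathbb{C}S$,
\[
(Z(a)\boldsymbol{*}Z(b))\boldsymbol{*}Z(c)=Z(ab)\boldsymbol{*}Z(c)=Z((ab)c)=Z(a(bc))=Z(a)\boldsymbol{*}(Z(b)\boldsymbol{*}Z(c)).
\]
Similarly, if $1$ denotes the identity of the unital algebra $\mathbb{C}S$, then for every $a\in\mathbb{C}S$ one has $Z(a)\boldsymbol{*}Z(1)=Z(a\cdot 1)=Z(a)$ and $Z(1)\boldsymbol{*}Z(a)=Z(a)$, so $Z(1)$ is a two-sided identity for $\boldsymbol{*}$. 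Therefore $(\mathbb{C}S,\boldsymbol{*})$ is a unital associative $\mathbb{C}$-algebra and the map $Z:(\mathbb{C}S,\cdot)\to(\mathbb{C}S,\boldsymbol{*})$ is a bijective algebra homomorphism, i.e.\ a $\mathbb{C}$-algebra isomorphism.

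The main conceptual obstacle has in fact already been resolved in Proposition~\ref{s*tHomIsTransitive}, where the interplay between $\ll$, the sharp-products $\sharp^{e}$, and the converging idempotent sequences $\varphi^{(\cdot,\cdot)}$ was handled carefully to produce a bijection between summands of $Z(s)\boldsymbol{*}Z(t)$ and elements below $st$. At the level of the present theorem only a bilinear-extension argument remains, which is essentially bookkeeping; no new use of the transitivity of $\ll$ is required beyond the fact that it legitimizes the definition of $\boldsymbol{*}$ and the statement of Proposition~\ref{s*tHomIsTransitive}.
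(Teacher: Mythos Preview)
Your proof is correct and follows the same approach as the paper, which simply cites Propositions~\ref{Z} and~\ref{s*tHomIsTransitive} and says ``the result follows.'' You have merely made explicit the routine bilinear-extension and transfer-of-associativity details that the paper leaves implicit.
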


\begin{proof}
By Propositions~\ref{Z} and~\ref{s*tHomIsTransitive}, the result follows.
\end{proof}

%For the remainder of this section, we assume that the relation $\ll$ is transitive in semigroups.

Let $s,t\in S$.
We have \[
s= \displaystyle\sum\limits_{s'\ll s}\mu_S(s', s)Z(s')\
\text{and}\ 
t= \displaystyle\sum\limits_{t'\ll t}\mu_S(t', t)Z(t').\]
Then, we get that
\begin{align*}
s\boldsymbol{*}t
&=
\displaystyle\sum\limits_{\substack{s'\ll s,\\ t'\ll t}}\mu_S(s', s)\mu_S(t', t)Z(s')\boldsymbol{*}Z(t')\\
&=
\displaystyle\sum\limits_{\substack{s'\ll s,\\ t'\ll t}}\mu_S(s', s)\mu_S(t', t)
\Big(
\sum\limits_{\substack{s''\ll s',\\ t''\ll t'}}
s''\stackrel{\varphi^{({s''}^{+}s',t'{t''}^{\ast})}}{\sharp} t''
\Big)\\
&=
\sum\limits_{\substack{s''\ll s,\\ t''\ll t}}\big[\sum\limits_{\substack{s''\ll s'\ll s,\\ t''\ll t'\ll t,\\ 
s''\stackrel{\varphi^{({s''}^{+}s',t'{t''}^{\ast})}}{\sharp} t''\neq 0}}
\mu_S(s', s)\mu_S(t', t)
\big]
s''t''\\
%&=
%\sum\limits_{\substack{s''\ll s,\\ t''\ll t}}
%\big[
%\sum\limits_{t''\ll t'\ll t}
%\big(
%\sum\limits_{\substack{s''\ll s'\ll s,\\ s''\stackrel{\varphi^{({s''}^{+}s',t'{t''}^{\ast})}}{\sharp} t''\neq 0}}
%\mu_S(s', s)
%\big)
%\mu_S(t', t)
%\big]
%s''t''\\
&=
\sum\limits_{\substack{s''\ll s,\\ t''\ll t}}
\big[
\sum\limits_{s''\ll s'\ll s}
\big(
\sum\limits_{\substack{t''\ll t'\ll t,\\ s''\stackrel{\varphi^{({s''}^{+}s',t'{t''}^{\ast})}}{\sharp} t''\neq 0}}
\mu_S(t', t)
\big)
\mu_S(s', s)
\big]
s''t''
.
\end{align*}
We define the function \(\xi\colon S^{(s)}\times S^{(t)}\rightarrow \mathbb{C}\), 
where \(S^{(s)}=\{s'' \in S \mid s'' \ll s\}\), for every \(s \in S\), as follows:
\begin{align*}
\xi(s'',t'') 
%&= 
%\sum\limits_{t''\ll t'\ll t}
%\big(
%\sum\limits_{\substack{s''\ll s'\ll s,\\ s''\stackrel{\varphi^{({s''}^{+}s',t'{t''}^{\ast})}}{\sharp} t'' \neq 0}}
%\mu_S(s', s)
%\big)
%\mu_S(t', t)\\
&=
\sum\limits_{s''\ll s'\ll s}
\big(
\sum\limits_{\substack{t''\ll t'\ll t,\\ s''\stackrel{\varphi^{({s''}^{+}s',t'{t''}^{\ast})}}{\sharp} t'' \neq 0}}
\mu_S(t', t)
\big)
\mu_S(s', s),
\end{align*}
for every \(s'' \in S^{(s)}\) and \(t'' \in S^{(t)}\).
%=======================
%\begin{align*}
%\xi(w'',t'') 
%&= 
%\sum\limits_{t''\ll t'\ll t}
%\big(
%\sum\limits_{\substack{w''\ll w'\ll w,\\ w''\stackrel{\varphi^{({w''}^{+}w',t'{t''}^{\ast})}}{\sharp} t'' \neq 0}}
%\mu_S(w', w)
%\big)
%\mu_S(t', t)\\
%\end{align*}
%for every \(w'' \in S^{(w)}\) and \(t'' \in S^{(t)}\).
%=======================

In the continuation of this section, we focus on a class of $\ll$--transitive singleton-rich semigroups called $\ll$-smooth and compute their determinants. 

\begin{deff}\label{ConditionsnEq7}   
Let \(S\) be a $\ll$--transitive singleton-rich semigroup. 
We say that $S$ is $\ll$-smooth if for every sequences                           
\(s''\ll s' \ll s\) and \(t''\ll t' \ll t\),
the following statements hold:
\begin{enumerate}
\item 
if \(s''\sharp t''\neq 0\), then we have \(\varphi^{({s''}^{+}s',t'{t''}^{\ast})} = \varphi({s''}^{+}s',t'{t''}^{\ast}).\) 
%int lll16 = DF13.Checktransitivelll(n, StBSS, ES, OrderES, What00); All = 26 (updated version);
%Valid also for non ll-transitive;
%int jj7 = DF21.phiDegreeOneSpecialCase3(n, StBSS, ES, OrderES);
%\item if \({t''}^{+}\leq e\) and \(et{t''}^{\ast}=t''\), for an idempotent \(e \in E(S)\), then we have \(et'{t''}^{\ast}=t''\).
%%The converse does not hold. There are contra-examples.
%%int jj7 = DF21.phiDegreeOneSpecialCase8(n, StBSS, ES, OrderES);
%\item if \({s''}^{\ast}\leq e\) and \({s''}^{+}se = s''\), then we have \({s''}^{+}s'e = s''\).
%int jj7 = DF21.phiDegreeOneSpecialCase9(n, StBSS, ES, OrderES);
\item if \(s''\sharp t''\neq 0\), then we have \(({s''}^{+}s')^{\ast}t'{t''}^{\ast} = t''\) if and only if
 \[({s''}^{+}s')^{\ast}t{t''}^{\ast} = t''.\]
%int lll16 = DF13.Checktransitivelll(n, StBSS, ES, OrderES, What00); All = 26 (updated version);
%Valid also for non ll-transitive;
%int jj9 = DF21.phiDegreeOneSpecialCase82(n, StBSS, ES, OrderES);
\item if \(s''({s''}^{+}s)^{\ast}=s''\), then we have \(s''({s''}^{+}s')^{\ast}=s''\).
%int jj7 = DF21.phiDegreeOneSpecialCase10(n, StBSS, ES, OrderES);
%All non ll-transitive fails this property.
%\item if \(s''({s''}^{+}s_1)^{\ast} \neq s''\) and \(s''({s''}^{+}s_2)^{\ast} \neq s''\), for some elements \(s''\ll s_1,s_2 \ll s\), 
%then there exists an element \(s''\ll s_3 \ll s\) such that \(s_3 \ll s_1, s_2\) and \(s''({s''}^{+}s_3)^{\ast} \neq s''\).
%%phiDegreeOneSpecialCase11(int n, int[,] SS, int[] ES, int OrderES)
\end{enumerate}
\end{deff}

Through verification using a program in C{\fontseries{b}\selectfont\#}, we have confirmed that the conditions of Definition~\ref{ConditionsnEq7} hold true for every $\ll$-transitive singleton-rich semigroup with an order less than 8.
To test these class of semigroups, the author uses the package Smallsemi \cite{Smallsemi0.6.13} in GAP~\cite{GAP4} as a database.

The following lemma straightforwardly follows from the definition of $\ll$-smooth semigroups.

\begin{lem}\label{ConditionsnEq8} %lll49   = DF21.EpsilonsprimeAndsFi(n, StBSS, ES, OrderES);    item 1
                                  %int j4  = DF21.EpsilonsprimeAndsFiDu2(n, StBSS, ES, OrderES); item 1
                                  %int jj5 = DF21.EpsilonsprimeAndsFiDu3(n, StBSS, ES, OrderES); item 2 and 3
Let $S$ be a $\ll$-smooth semigroup and let \(s''\ll s' \ll s\) and \(t''\ll t' \ll t\) be sequences in $S$. If \(s''\sharp t''\neq 0\), then
the following statements hold:
\begin{enumerate}
\item we have \(\varphi({s''}^{+}s',t{t''}^{\ast}) = {t''}^{+}\) if and only if \(\varphi({s''}^{+}s',t'{t''}^{\ast}) = {t''}^{+}\).
%int jj9 = DF21.phiDegreeOneSpecialCase82(n, StBSS, ES, OrderES);
%\item 
%if \(t''\ll t_1' \ll t'_2 \ll t\) then
%\(\varphi({s''}^{+}s',t_2'{t''}^{\ast}) = {t''}^{+}\) implies \(\varphi({s''}^{+}s',t_1'{t''}^{\ast}) = {t''}^{+}\).
%\item we have \(\varphi^{({s''}^{+}s',t{t''}^{\ast})} = {s''}^{\ast}\) if and only if \(\varphi^{({s''}^{+}s',t'{t''}^{\ast})} = {s''}^{\ast}\).
\item 
if \(s''\ll s_1' \ll s'_2 \ll s\) then 
\(\varphi({s''}^{+}s'_2,t'') = {t''}^{+}\) implies \(\varphi({s''}^{+}s'_1,t'') = {t''}^{+}\).
%\item 
%if \(s''\ll s_1' , s'_2 \ll s\) and 
%\(\varphi({s''}^{+}s'_1,t''), \varphi({s''}^{+}s'_2,t'') \neq {t''}^{+}\),
%then there exists an element \(s''\ll s_3' \ll s\) such that \(s'_3\ll s'_1, s'_2\) and \(\varphi({s''}^{+}s'_3,t'') \neq {t''}^{+}\).
\end{enumerate}
\end{lem}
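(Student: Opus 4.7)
The plan is to translate both parts into pointwise equalities of the form $({s''}^{+}s')^{\ast}t''=t''$ and then invoke the appropriate smoothness axiom.

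For part (1), the first step is the dictionary: under singleton-richness, $\varphi({s''}^{+}s',t{t''}^{\ast})={t''}^{+}$ is equivalent to $({s''}^{+}s')^{\ast}t{t''}^{\ast}=t''$. Indeed, setting $x=({s''}^{+}s')^{\ast}t{t''}^{\ast}$, both $({s''}^{+}s')^{\ast}$ and $x^{+}={t''}^{+}$ lie in $\varphi^{+}(x)$; since $\{{t''}^{+}\}$ is the singleton kernel of $\langle\varphi^{+}(x)\rangle$, kernel absorption forces ${t''}^{+}({s''}^{+}s')^{\ast}={t''}^{+}$, whence $x={t''}^{+}x={t''}^{+}t{t''}^{\ast}=t''$ (the last equality uses $t''\ll t$, obtained from transitivity). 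The symmetric statement, with $t'$ in place of $t$ and using $t''\ll t'$, gives the parallel reduction for $\varphi({s''}^{+}s',t'{t''}^{\ast})={t''}^{+}$. Smoothness condition~(2) is precisely the equivalence of these two pointwise equalities (under $s''\sharp t''\neq 0$), so chaining the three equivalences delivers the claimed ``iff''.

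For part~(2), I would first invoke the same dictionary to extract $({s''}^{+}s'_2)^{\ast}t''=t''$ from $\varphi({s''}^{+}s'_2,t'')={t''}^{+}$. Next, using $s''\ll s'_2$ (by transitivity), ${s''}^{\ast}={t''}^{+}$ (from $s''\sharp t''\neq 0$), and the absorption ${t''}^{+}({s''}^{+}s'_2)^{\ast}={t''}^{+}$, I compute
\[s''({s''}^{+}s'_2)^{\ast}={s''}^{+}s'_2{s''}^{\ast}({s''}^{+}s'_2)^{\ast}={s''}^{+}s'_2{t''}^{+}={s''}^{+}s'_2{s''}^{\ast}=s''.\]
Smoothness condition~(3), applied to the subsequence $s''\ll s'_1\ll s'_2$, upgrades this to $s''({s''}^{+}s'_1)^{\ast}=s''$. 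Taking ${}^{\ast}$ of both sides and using Lemma~\ref{eslls}(2) gives ${s''}^{\ast}\leq({s''}^{+}s'_1)^{\ast}$, i.e.\ ${t''}^{+}\leq({s''}^{+}s'_1)^{\ast}$; the natural order on idempotents then yields $({s''}^{+}s'_1)^{\ast}{t''}^{+}={t''}^{+}$, so
\[({s''}^{+}s'_1)^{\ast}t''=({s''}^{+}s'_1)^{\ast}{t''}^{+}t''={t''}^{+}t''=t''.\]
Taking ${}^{+}$ of both sides yields $\varphi({s''}^{+}s'_1,t'')={t''}^{+}$, as required.

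The one principle used repeatedly is \emph{kernel absorption}: whenever $e\in\varphi^{+}(x)$, the singleton kernel element $x^{+}$ of $\langle\varphi^{+}(x)\rangle$ absorbs $e$ on either side (and analogously for $\varphi^{\ast}$). This is the same tool that powers Proposition~\ref{lllES}(2) and Lemma~\ref{phittP}, and it is what furnishes the clean dictionary between $\varphi$-values and pointwise equalities that the smoothness axioms are designed to bridge. In keeping with the author's remark that the lemma ``straightforwardly follows'' from the definition, I anticipate no obstacle beyond careful bookkeeping of which absorption is being invoked at each step.
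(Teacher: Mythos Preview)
Your proof is correct and follows precisely the route the paper intends: the paper merely remarks that the lemma ``straightforwardly follows from the definition of $\ll$-smooth semigroups,'' and your argument spells out exactly this, translating the $\varphi$-conditions into pointwise equalities via kernel absorption and then invoking smoothness conditions~(2) and~(3) of Definition~\ref{ConditionsnEq7}. The only cosmetic remark is that in part~(2) the appeal to Lemma~\ref{eslls}(2) is not strictly needed---the inequality ${s''}^{\ast}\leq({s''}^{+}s'_1)^{\ast}$ already follows directly from $({s''}^{+}s'_1)^{\ast}\in\varphi^{\ast}(s'')$ and the same kernel-absorption principle you use elsewhere---but the citation is not wrong.
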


%\begin{proof}
%(1)
%Since \(\varphi({s''}^{+}s',t_2'{t''}^{\ast}) = {t''}^{+}\) and \(({s''}^{+}s')^{\ast} \in \varphi^{+}(({s''}^{+}s')^{\ast}t_2'{t''}^{\ast})\), 
%we have \(({s''}^{+}s')^{\ast}t_2'{t''}^{\ast} = t''\).
%Now, by Assumption~\ref{ConditionsnEq7}.(1), we have \(({s''}^{+}s')^{\ast}t_1'{t''}^{\ast} = t''\).
%It follows that \(\varphi({s''}^{+}s',t_1'{t''}^{\ast}) = {t''}^{+}\).
%
%(2)
%Since \(\varphi({s''}^{+}s'_2,t'') = {t''}^{+}\), we have \({s''}^{\ast} (={t''}^{+}) \leq ({s''}^{+}s'_2)^{\ast}\).
%Now, by Assumption~\ref{ConditionsnEq7}.(2), we have \({s''}^{\ast} \leq ({s''}^{+}s'_1)^{\ast}\) and, thus, 
%\(\varphi({s''}^{+}s'_1,t'') = {t''}^{+}\).
%
%(3)
%As \(\varphi^{({s''}^{+}s'_2,t'')}, \varphi^{({s''}^{+}s'_1,t'')} \neq {t''}^{+}\),
%we have \(s''({s''}^{+}s'_1)^{\ast} \neq s''\) and \(s''({s''}^{+}s'_2)^{\ast} \neq s''\).
%Hence, by Assumption~\ref{ConditionsnEq7}.(3),
%there exists an element \(s''\ll s'_3 \ll s\) such that \(s'_3 \ll s'_1, s'_2\) and \(s''({s''}^{+}s'_3)^{\ast} \neq s''\).
%Therefore, we have \(\varphi({s''}^{+}s'_3,t'') \neq {t''}^{+}\).
%\end{proof}

Note that if the relation \(\ll\) is not transitive in a semigroup $S$, Lemma~\ref{ConditionsnEq8} does not necessarily hold (see Example~\ref{Ex018}).

\begin{lem}\label{LemmaMain}
Suppose that \(S\) is $\ll$-smooth.
Let \(s,t\in S\) with \(s^{\ast} \neq t^{+}\) and \(s'' \in S^{(s)}\) and \(t'' \in S^{(t)}\) with \(s'' \sharp t'' \neq 0\).
We have \(\xi(s'',t'') \neq 0\), if and only if the following conditions hold:
\begin{enumerate}
\item \(s'' = st^{+}\), \(t''= t\);
\item \({s''}^{+}=s^{+}\);
\item \( \xi(st^{+},t) = \sum\limits_{\substack{st^{+}\ll s'\ll s,\\ t^{+} \leq (s^{+}s')^{\ast}}}\mu_S(s', s) \neq 0.\)
%\item if \(st^{+} \ll s' \ll s\), for some element \(s' \neq s\) then \(t^{+} \leq (s^{+}s')^{\ast}\).
%\(s' = s{s'}^{\ast}\) with \(t^{+} \leq {s'}^{\ast}\).
\end{enumerate}
Moreover, we have \(s^{\ast} \not\leq t^{+}\). Furthermore, if \(s \ast t \neq 0\), then we have \[s \ast t =\big(\sum\limits_{\substack{st^{+}\ll s'\ll s,\\ t^{+} \leq (s^{+}s')^{\ast}}}\mu_S(s', s)\big)st.\]
\end{lem}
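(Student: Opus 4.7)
The strategy is to evaluate the double sum defining $\xi(s'',t'')$ by first simplifying the inner sum over $t'$ (forcing $t''=t$), and then analyzing the outer sum over $s'$ (pinning down $s''=st^{+}$).

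For the inner sum, the assumption $s''\sharp t''\neq 0$ gives ${s''}^{\ast}={t''}^{+}$, so the constraint $s''\stackrel{\varphi^{({s''}^{+}s',t'{t''}^{\ast})}}{\sharp}t''\neq 0$ reduces to the single requirement $\varphi^{({s''}^{+}s',t'{t''}^{\ast})}={t''}^{+}$. Applying $\ll$-smooth condition~(1) of Definition~\ref{ConditionsnEq7} rewrites the superscript as $\varphi({s''}^{+}s',t'{t''}^{\ast})$, and Lemma~\ref{ConditionsnEq8}(1) replaces $t'$ by $t$, yielding a condition that depends only on $s'$. Consequently the inner sum factors as an indicator of this $s'$-only condition times $\sum_{t''\ll t'\ll t}\mu_S(t',t)=\delta_{t'',t}$ (M\"obius inversion). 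Therefore $\xi(s'',t'')\neq 0$ forces $t''=t$.

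Once $t''=t$, the outer sum becomes $\xi(s'',t)=\sum_{s''\ll s'\ll s,\,\varphi({s''}^{+}s',t)=t^{+}}\mu_S(s',s)$. A key observation is that $s'=st^{+}$ always lies in the summation domain: since $s''\ll s$ and ${s''}^{\ast}=t^{+}$ give $s''={s''}^{+}st^{+}$, one computes $\varphi({s''}^{+}st^{+},t)=\varphi(s'',t)=(t^{+}t)^{+}=t^{+}$. By Lemma~\ref{ConditionsnEq8}(2) the constraint on $s'$ is downward-closed in $[s'',s]$, so if it also holds at $s'=s$ the sum collapses to $\delta_{s'',s}=0$ (using ${s''}^{\ast}=t^{+}\neq s^{\ast}$). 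The moreover inequality $s^{\ast}\not\leq t^{+}$ then follows immediately: $s^{\ast}\leq t^{+}$ would force $st^{+}=ss^{\ast}t^{+}=s$, and then $s''=st^{+}=s$ would force ${s''}^{\ast}=s^{\ast}=t^{+}$, contradicting the hypothesis.

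The main obstacle is to prove that $\xi(s'',t)\neq 0$ forces $s''=st^{+}$ together with ${s''}^{+}=s^{+}$. I would use $\ll$-smooth condition~(3) of Definition~\ref{ConditionsnEq7} together with singleton-richness to rewrite the nonvanishing constraint $\varphi({s''}^{+}s',t)=t^{+}$ in the equivalent order-theoretic form $t^{+}\leq ({s''}^{+}s')^{\ast}$, and then combine this rewriting with the identity $s''={s''}^{+}st^{+}$ and a careful M\"obius comparison across the down-set of permitted $s'$'s to conclude that the outer sum vanishes unless ${s''}^{+}=s^{+}$, in which case $s''=s^{+}st^{+}=st^{+}$ and the sum reduces to $\sum_{st^{+}\ll s'\ll s,\,t^{+}\leq (s^{+}s')^{\ast}}\mu_S(s',s)$ as claimed in~(3). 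The coefficient formula for $s\boldsymbol{*}t$ follows at once, since the sole contributing pair is $(st^{+},t)$ and their product in the $\sharp$ operation is $st^{+}\cdot t=st$.
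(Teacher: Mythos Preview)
Your reduction of the inner sum is correct and matches the paper's argument: using $\ll$-smooth condition~(1) and Lemma~\ref{ConditionsnEq8}(1) to make the constraint depend only on $s'$, then collapsing $\sum_{t''\ll t'\ll t}\mu_S(t',t)$ to $\delta_{t'',t}$, is exactly what the paper does to force $t''=t$. Your observation that $st^{+}$ always lies in $[s'',s]$ and satisfies the constraint, and your downward-closure argument ruling out the case where the constraint holds at $s'=s$, are also fine.

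The gap is precisely at the point you label ``the main obstacle.'' You say you would use condition~(3) and ``a careful M\"obius comparison across the down-set of permitted $s'$'s'' to force ${s''}^{+}=s^{+}$, but this is where all of the work lies and your plan does not supply a mechanism. The paper's argument here is substantially more delicate than a down-set comparison: one assumes ${s''}^{+}s\neq s$ for contradiction, takes the \emph{minimal} elements $x_1,\dots,x_n$ in $[s'',s]$ at which the constraint $\varphi({s''}^{+}x,t)={s''}^{\ast}$ fails, and shows (using minimality) that each $x_i$ satisfies ${s''}^{+}x_i=x_i$, hence $x_i\ll{s''}^{+}s\ll s$. This guarantees that every up-set $X_{x_i}=\{x:x_i\ll x\ll s\}$ contains the two distinct elements $s$ and ${s''}^{+}s$, and an Inclusion--Exclusion computation combined with an induction on $\bigl|\bigcup_i X_{x_i}\bigr|$ then shows the failing part of the sum vanishes, forcing $\xi(s'',t)=0$. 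None of this structure (minimal failure points, the identity ${s''}^{+}x_i=x_i$, or the Inclusion--Exclusion induction) is present in your sketch, and condition~(3) by itself only restates the downward-closure you have already used; it does not give the vanishing.

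A secondary point: the rewriting $\varphi({s''}^{+}s',t)=t^{+}\iff t^{+}\leq({s''}^{+}s')^{\ast}$ is an elementary consequence of the definitions of $\varphi$ and $(\cdot)^{\ast}$ and does not require condition~(3).
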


\begin{proof}
%\begin{align*}
%s\boldsymbol{*}t
%&=
%\displaystyle\sum\limits_{\substack{s'\ll s,\\ t'\ll t}}\mu_S(s', s)\mu_S(t', t)
%\Big(
%\sum\limits_{\substack{s''\ll s',\\ t''\ll t'}}
%s''\stackrel{\varphi^{({s''}^{+}s',t'{t''}^{\ast})}}{\sharp} t''
%\Big)\\
%%&=
%%\displaystyle\sum\limits_{s'\ll s}\big[\displaystyle\sum\limits_{t'\ll t}
%%\Big(
%%\sum\limits_{\substack{s''\ll s',\\ t''\ll t'}}
%%s''\stackrel{\varphi^{({s''}^{+}s',t'{t''}^{\ast})}}{\sharp} t''
%%\Big)\mu_S(t', t)\big]\mu_S(s', s)\\
%&=
%\displaystyle\sum\limits_{s'\ll s}\big[\displaystyle\sum\limits_{\substack{s''\ll s',\\ t''\ll t,\\ s''\sharp t'' \neq 0}}
%\Big(
%\sum\limits_{\substack{t''\ll t'\ll t,\\ s''\stackrel{\varphi^{({s''}^{+}s',t'{t''}^{\ast})}}{\sharp} t'' \neq 0}}
%\mu_S(t', t)
%\Big)s''t''\big]\mu_S(s', s)\\
%%&=
%%\sum\limits_{\substack{s''\ll s,\\ t''\ll t}}\big[\sum\limits_{\substack{s''\ll s'\ll s,\\ t''\ll t'\ll t,\\ 
%%s''\stackrel{\varphi^{({s''}^{+}s',t'{t''}^{\ast})}}{\sharp} t''\neq 0}}
%%\mu_S(s', s)\mu_S(t', t)
%%\big]
%%s''t''\\
%%%&=
%%%\sum\limits_{\substack{s''\ll s,\\ t''\ll t}}
%%%\big[
%%%\sum\limits_{t''\ll t'\ll t}
%%%\big(
%%%\sum\limits_{\substack{s''\ll s'\ll s,\\ s''\stackrel{\varphi^{({s''}^{+}s',t'{t''}^{\ast})}}{\sharp} t''\neq 0}}
%%%\mu_S(s', s)
%%%\big)
%%%\mu_S(t', t)
%%%\big]
%%%s''t''\\
%%&=
%%\sum\limits_{\substack{s''\ll s,\\ t''\ll t}}
%%\big[
%%\sum\limits_{s''\ll s'\ll s}
%%\big(
%%\sum\limits_{\substack{t''\ll t'\ll t,\\ s''\stackrel{\varphi^{({s''}^{+}s',t'{t''}^{\ast})}}{\sharp} t''\neq 0}}
%%\mu_S(t', t)
%%\big)
%%\mu_S(s', s)
%%\big]
%%s''t''
%.
%\end{align*}
First, suppose that \(\xi(s'',t'') \neq 0\).

Assume that \(t'' \neq t\). 
Let \(s'\) be an element of \(S\) with \(s''\ll s'\ll s\). 
As \(S\) is $\ll$-smooth and \(s'' \sharp t'' \neq 0\), we have 
\(\varphi^{({s''}^{+}s',t'{t''}^{\ast})} = \varphi({s''}^{+}s',t'{t''}^{\ast})\), for every \(t'\) with \(t''\ll t'\ll t\).
Let \(\Delta = \sum\limits_{\substack{t''\ll t'\ll t,\\ s''\stackrel{\varphi({s''}^{+}s',t'{t''}^{\ast})}{\sharp} t'' \neq 0}}
\mu_S(t', t)\).
By Lemma~\ref{ConditionsnEq8}.(1), if \(\varphi({s''}^{+}s',t'') \neq {s''}^{\ast}\), then \(\varphi({s''}^{+}s',t'{t''}^{\ast}) \neq {s''}^{\ast}\), for every \(t''\ll t'\ll t\) and, thus \(\Delta = 0\).
Also, if \(\varphi({s''}^{+}s',t'') = {s''}^{\ast}\), then, we have \(\varphi({s''}^{+}s',t'{t''}^{\ast}) = {s''}^{\ast}\), for every \(t''\ll t'\ll t\), and, thus, we have 
\(\Delta=
\sum\limits_{\substack{t''\ll t'\ll t}}
\mu_S(t', t)=0.\)
It follows that \(\xi(s'',t'') = 0\), a contradiction.

Then, \(t'' = t\). We get that
\begin{align*}
\xi(s'',t) %as \(\mu_S(t, t)=1
&=
\sum\limits_{\substack{s''\ll s'\ll s,\\ s''\stackrel{\varphi({s''}^{+}s',t)}{\sharp} t \neq 0}}
\mu_S(s', s).
\end{align*}

As \(s^{\ast} \neq t^{+}\) and \(s'' \sharp t \neq 0\), we have \(s'' \neq s\).

If \(s''\stackrel{\varphi({s''}^{+}s',t)}{\sharp} t \neq 0\), for all \(s''\ll s'\ll s\), and considering that \({s''} \neq s\), it follows that \(\sum\limits_{s''\ll s'\ll s}\mu_S(s', s) = 0\).

Then, there is an element \(s''\ll x\ll s\) such that \(\varphi({s''}^{+}x,t) \neq {s''}^{\ast}\).
Let \(s''\ll x_1,\ldots, x_{n} \ll s\) be the minimal elements with respect to the relation \(\ll\) satisfying \(\varphi({s''}^{+}x_i,t) \neq {s''}^{\ast}\).
By Lemma~\ref{ConditionsnEq8}.(2), 
if \(x_i \ll u\), for some \(s''\ll u\ll s\), then \(\varphi({s''}^{+}u,t) \neq {s''}^{\ast}\).

%%%%%%%%%%%%%%%%%%%%
For, each \(s''\ll s'\ll s\), define  
\[
X_{s'} = \{s'' \ll x \ll s \mid s' \ll x \}.
\]  
Then, by the Inclusion-Exclusion Principle, we have 
\begin{align*}
\sum\limits_{\substack{s''\ll x\ll s\\ \varphi({s''}^{+}x,t) \neq {s''}^{\ast}}}\mu_S(x, s)
=&
\sum\limits_{x\in \bigcup_{1\leq i\leq n} X_{x_i}}\mu_S(x, s)\\
=&
\sum\limits_{i=1}^{n}\sum\limits_{x\in X_{x_{i}}}\mu_S(x, s)
-\sum\limits_{\substack{1\leq i_1<i_2\leq n}}\sum\limits_{\substack{s''\ll x\ll s\\ x\in X_{x_{i_1}}\bigcap X_{x_{i_2}}}}\mu_S(x, s)\\
&+\sum\limits_{\substack{1\leq i_1<i_2<i_3\leq n}}\sum\limits_{\substack{s''\ll x\ll s\\ x\in X_{x_{i_1}}\bigcap X_{x_{i_2}}\bigcap X_{x_{i_3}}}}\mu_S(x, s)
-\cdots\\
&\pm\sum\limits_{\substack{s''\ll x\ll s\\ x\in X_{x_{1}}\bigcap \cdots \bigcap X_{x_{n}}}}\mu_S(x, s).
\end{align*}
%%%%%%%%%%%%%%%%%%%

We prove \( s = {s''}^{+}s \) by contradiction. 
Assume, for the sake of contradiction, that \( s \neq {s''}^{+}s \). 

We proceed by induction on the size of the set \( \bigcup_{1\leq i\leq n} X_{x_i} \) that  
\[
\sum\limits_{x\in \bigcup_{1\leq i\leq n} X_{x_i}}\mu_S(x, s) = 0.
\] 
%%%%%%%%%%%%%%%%%%%

Suppose that \( {s''}^{+}x_i \neq x_i \), for some \( 1\leq i\leq n \).  
We have \({s''}^{+}x_i \ll x_i\).
Since \( x_i \) is minimal with respect to \( \ll \) such that \( \varphi({s''}^{+}x_i,t) \neq {s''}^{\ast} \), we obtain  
\[
\varphi({s''}^{+}{s''}^{+}x_i,t) = {s''}^{\ast},
\]  
which leads to a contradiction.  

Therefore, we conclude that \( {s''}^{+}x_i = x_i \) for all \( 1\leq i\leq n \).

Since \( x_i \ll s \), we have  
\(
x_i = x_i^{+} s x_i^{\ast}.
\)  
Moreover, as \( {s''}^{+}x_i = x_i \), it follows that \( x^{+}_i \leq {s''}^{+} \).  
Thus, we obtain  
\(
x_i = x_i^{+} {s''}^{+} s x_i^{\ast},
\)  
which implies that  
\(
x_i \ll {s''}^{+}s \ll s,
\)
for every \( 1\leq i\leq n \).

Hence, each subset \( X_{x_i} \) contains two distinct elements, \( s \) and \( {s''}^{+}s \).
%%%%%%%%%%%%

Our inductive hypothesis assumes that for every subset \( \bigcup_{1\leq i\leq n} X_{x_i} \) of size at most $k$, where each $X_{x_i}$ contain two elements, namely \( s \) and \( {s''}^{+}s \), and satisfies \( {s''}^{+}x_i = x_i \) for all \( 1\leq i\leq n \), we have
\[
\sum\limits_{x\in \bigcup_{1\leq i\leq n} X_{x_i}}\mu_S(x, s) = 0.
\] 
%%%%%%%%%%%

We establish the base case of our induction when \( \abs{\bigcup_{1\leq i\leq n} X_{x_i}} = 2 \).
In this case, the set \( \bigcup_{1\leq i\leq n} X_{x_i} \) consists of exactly two elements, namely \( s \) and \( {s''}^{+}s \).
Thus, we immediately obtain:
\[
\sum\limits_{x\in \bigcup_{1\leq i\leq n} X_{x_i}}\mu_S(x, s) =\sum\limits_{x\in X_{{s''}^{+}s} }\mu_S(x, s)= 0.
\] 
%%%%%%%%%%%

For the inductive step, we assume that \( \abs{\bigcup_{1\leq i\leq n} X_{x_i}}=k+1 \).

If $n=1$, then we have 
\begin{align*}
\sum\limits_{\substack{s''\ll x\ll s\\ \varphi({s''}^{+}x,t) \neq {s''}^{\ast}}}\mu_S(x, s)
=&
\sum\limits_{x\in X_{x_1}}\mu_S(x, s)=0,
\end{align*}
since the subset $ X_{x_1}$ contains two distinct elements: \( s \) and \( {s''}^{+}s \).
%%%%%%%%%%%%%

Otherwise, \( n > 1 \).  

Let \( 1\leq i_1,\ldots, i_m\leq n \).  
The subset \( X_{x_{i_1}} \cap \cdots \cap X_{x_{i_m}} \) contains minimal elements \( y_1, \ldots, y_l \) with respect to the relation \( \ll \);  
\[
X_{x_{i_1}} \cap \cdots \cap X_{x_{i_m}} =\bigcup_{1\leq i\leq l}X_{y_i}.
\]  
Suppose that for some \( 1 \leq i \leq l \), we have \( {s''}^{+}y_i \neq y_i \).  
Since \( x_{i_k} \ll y_i \) and \( {s''}^{+}x_{i_k} = x_{i_k} \) for every \( i_k \in \{i_1, \ldots, i_m\} \), we obtain  
\[
x_{i_k} = x_{i_k}^{+} y_i x_{i_k}^{\ast} = x_{i_k}^{+} {s''}^{+} y_i x_{i_k}^{\ast}.
\]  
Thus, \( x_{i_k} \ll {s''}^{+}y_i \ll y_i\) for every \( i_k \in \{i_1, \ldots, i_m\} \), contradicting the minimality of \( y_i \) in \( X_{x_{i_1}} \cap \cdots \cap X_{x_{i_m}} \).  
Therefore, we must have \( {s''}^{+}y_i = y_i \) for all \( 1 \leq i \leq l \).  
Moreover, we obtain  
\[
y_i \ll {s''}^{+}s \ll s,
\]
for every \( 1\leq i\leq l \).  

Now, as \( n\neq 1 \), the size of  
\[
X_{x_{i_1}} \cap \cdots \cap X_{x_{i_m}} =\bigcup_{1\leq i\leq l}X_{y_i}
\]  
is strictly less than \( k+1 \), and by the inductive hypothesis, we have  
\[
\sum\limits_{x\in X_{x_{i_1}} \cap \cdots \cap X_{x_{i_m}}}\mu_S(x, s) = 0.
\]

Then, by the Inclusion-Exclusion Principle, we obtain  
\[
\sum\limits_{\substack{s''\ll x\ll s\\ \varphi({s''}^{+}x,t) \neq {s''}^{\ast}}}\mu_S(x, s) = 0.
\]

Thus,  
\[
\xi(s'', t'') = 0,
\]  
which contradicts our assumption. Hence, we must have:  
\[
s = {s''}^{+}s.
\]
%%%%%%%%%%%%

It follows that \(s^{+}\leq {s''}^{+}\).
Also, as \(s'' \ll s\), we have \(s''=s''^{+}ss''^{\ast}=ss''^{\ast}\), 
and, thus, \({s''}^{+} \leq s^{+}\). Therefore, we get that \({s''}^{+}=s^{+}\).
%Additionally, for \(s''\ll s'\ll s\) with \(s'\neq s\), as \({s''}^{+}=s^{+}\), by above, we have \(t^{+} \leq (s^{+}s')^{\ast}\).

%As $\ll$ is transitive, by Lemma~\ref{lllES}.(1), we have \(s''\ll {s'}^{+}s \ll s\). If \({s'}^{+}s \neq s\), 
%again we have \(\varphi({s''}^{+}{s'}^{+}s,t) = \varphi({s''}^{+}s,t) = t^{+}\), a contradiction. Hence, we get that \({s'}^{+}s = s\).
%It follows that \(s' = s{s'}^{\ast}\). Also, as above, since ${s''}^{\ast} \leq ({s''}^{+}s')^{\ast}$, 
%we have \( t^{+} = {s''}^{\ast} \leq ({s''}^{+}s')^{\ast} = ({s''}^{+}s{s'}^{\ast})^{\ast} = (s{s'}^{\ast})^{\ast} = {s'}^{\ast}\).

Hence, we have  
\[
\xi(st^{+},t) 
=
\sum\limits_{\substack{st^{+}\ll s'\ll s,\\ st^{+}\stackrel{\varphi((st^{+})^{+}s',t)}{\sharp} t \neq 0}}
\mu_S(s', s)
=
\sum\limits_{\substack{st^{+}\ll s'\ll s,\\ st^{+}\stackrel{\varphi(s^{+}s',t)}{\sharp} t \neq 0}}
\mu_S(s', s).
\]
Rewriting the condition, we obtain:  
\[
\xi(st^{+},t) 
=
\sum\limits_{\substack{st^{+}\ll s'\ll s,\\ \varphi(s^{+}s',t)= t^{+}}}
\mu_S(s', s)
=
\sum\limits_{\substack{st^{+}\ll s'\ll s,\\ t^{+} \leq (s^{+}s')^{\ast}}}
\mu_S(s', s).
\]
In the second part, we use the equivalence of the conditions \( \varphi(s^{+}s',t) = t^{+} \) and \( t^{+} \leq (s^{+}s')^{\ast} \).

%===========

Now, suppose that the conditions of the lemma hold for the elements \(s'' = st^{+}\) and \(t''= t\). 

We have 
\begin{align*}
\xi(s'',t'') 
=
\xi(st^{+},t)
&=
\sum\limits_{\substack{st^{+}\ll s'\ll s,\\ s''\stackrel{\varphi({s}^{+}s',t)}{\sharp} t \neq 0}}
\mu_S(s', s)
=
\sum\limits_{\substack{st^{+}\ll s'\ll s,\\ t^{+} \leq (s^{+}s')^{\ast}}}
\mu_S(s', s).
\end{align*}

Hence, we have \(\xi(s'',t'') \neq 0\).

As \(st^{+} \sharp t \neq 0\) and \(s^{\ast} \neq t^{+}\), we have \(s^{\ast} \not\leq t^{+}\).

Moreover, we deduce that \(s \ast t = \xi(st^{+},t)st^{+}t = \big(\sum\limits_{\substack{st^{+}\ll s'\ll s,\\ t^{+} \leq (s^{+}s')^{\ast}}}
\mu_S(s', s)\big)st\).
\end{proof}

Suppose that \( S \) is a \(\ll\)-smooth semigroup. Furthermore, assume that for every sequence \( s'' \ll s' \ll s \), if  
\[
s''({s''}^{+}s_1)^{\ast} \neq s'' \quad \text{and} \quad s''({s''}^{+}s_2)^{\ast} \neq s'',
\]  
for some elements \( s'' \ll s_1, s_2 \ll s \), then there exists an element \( s'' \ll s_3 \ll s \) such that  
\[
s_3 \ll s_1, s_2 \quad \text{and} \quad s''({s''}^{+}s_3)^{\ast} \neq s''.
\]  

This condition is equivalent to the following: If \( s'' \ll s_1', s_2' \ll s \) and  
\[
\varphi({s''}^{+}s_1',t), \varphi({s''}^{+}s_2',t) \neq {t}^{+},
\]  
then there exists an element \( s'' \ll s_3' \ll s \) such that  
\begin{equation}\label{imposedCondition}
s_3' \ll s_1', s_2' \quad \text{and} \quad \varphi({s''}^{+}s_3',t) \neq {t}^{+},
\end{equation}
for every \( t \in S \).

In Lemma~\ref{LemmaMain}, if \( S \) satisfies Condition~(\ref{imposedCondition}), \( \xi(s'',t'') \neq 0 \), and the other conditions of the lemma hold, we can establish that  
\[
\xi(s'',t'') = -1.
\]
Indeed, as we discussed in the proof of Lemma~\ref{LemmaMain}, there exists an element \( st^{+} \ll x \ll s \) such that \( \varphi(s''x,t) \neq t^{+} \).  
By Condition~(\ref{imposedCondition}), there is a minimal element \( u \) with respect to the relation \( \ll \) such that \( \varphi({s''}^{+}u,t) \neq {s''}^{\ast} \).  
%Moreover, by Lemma~\ref{ConditionsnEq8}.(2), if \( u \ll u' \) for some \( s'' \ll u' \ll s \), then \( \varphi({s''}^{+}u',t) \neq {s''}^{\ast} \).

If \(u \neq s\), then we have 
\[
\sum\limits_{\substack{s''\ll s'\ll s,\\ s''\stackrel{\varphi({s''}^{+}s',t)}{\sharp} t = 0}}\mu_S(s', s)
=
\sum\limits_{u\ll s'\ll s}\mu_S(s', s)=0
\] 
and, thus, we deduce
\(\xi(s'',t) = 0\).
Then, we have \(u=s\) and thus, 
\[\xi(s'',t) 
= \sum\limits_{\substack{s''\ll s'\ll s,\\ s''\stackrel{\varphi({s''}^{+}s',t)}{\sharp} t \neq 0}}\mu_S(s', s) 
= \sum\limits_{s''\ll s'\ll s}\mu_S(s', s)-\mu_S(s, s)
= -1.
\]

Additionally, by running a program in C{\fontseries{b}\selectfont\#}, we have verified that Condition~(\ref{imposedCondition}) holds true for all \(\ll\)-transitive singleton-rich semigroups with an order less than 8.

\begin{cor}\label{LemmaMain2}
Let \( S \) be a \(\ll\)-smooth semigroup.  
Suppose \( s, t \in S \) such that \( s^{\ast} \neq t^{+} \) and \( s \ast t \neq 0 \).  
Then, for every \( t' \in S \) with \( {t'}^{+} = t^{+} \), we have the following equivalence:
\[
s \ast t' \neq 0 \quad \text{if and only if} \quad st^{+} \sharp t' \neq 0.
\]
Furthermore, if \( s \ast t' \neq 0 \), then it holds that
\(
s \ast t' = \big(\sum\limits_{\substack{st^{+}\ll s'\ll s,\\ t^{+} \leq (s^{+}s')^{\ast}}}\mu_S(s', s)\big) st'.
\)
\end{cor}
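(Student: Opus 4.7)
The plan is to deduce this corollary directly from Lemma~\ref{LemmaMain} applied to the pair $(s,t')$ instead of $(s,t)$, exploiting the fact that the data appearing in the formula for $\xi$ only depends on $t^{+}$.

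First, since ${t'}^{+}=t^{+}$ and $s^{\ast}\neq t^{+}$, we have $s^{\ast}\neq {t'}^{+}$, so the hypotheses of Lemma~\ref{LemmaMain} are met for the product $s\boldsymbol{\ast}t'$. By that lemma, the only pair $(s'',t'')$ that can contribute a nonzero term to $s\boldsymbol{\ast}t'$ via $s''\sharp t''\neq 0$ is $s''=s{t'}^{+}=st^{+}$ and $t''=t'$, and the corresponding coefficient is
\[\xi(st^{+},t')=\sum\limits_{\substack{st^{+}\ll s'\ll s,\\ {t'}^{+}\leq (s^{+}s')^{\ast}}}\mu_S(s',s).\]
Since ${t'}^{+}=t^{+}$, the summation index condition is identical to the one obtained for $t$, and we get
\[\xi(st^{+},t')=\xi(st^{+},t).\]

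Next, because $s\boldsymbol{\ast}t\neq 0$ (again by Lemma~\ref{LemmaMain} applied to $(s,t)$), we already know $\xi(st^{+},t)\neq 0$, hence $\xi(st^{+},t')\neq 0$. It follows that $s\boldsymbol{\ast}t'\neq 0$ if and only if the single potentially contributing term $st^{+}\sharp t'$ is nonzero, which is the claimed equivalence.

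Finally, when $s\boldsymbol{\ast}t'\neq 0$ we compute
\[s\boldsymbol{\ast}t'=\xi(st^{+},t')\cdot st^{+}t'=\Bigl(\sum\limits_{\substack{st^{+}\ll s'\ll s,\\ t^{+}\leq (s^{+}s')^{\ast}}}\mu_S(s',s)\Bigr)\,st^{+}t',\]
and since ${t'}^{+}=t^{+}$ gives $t^{+}t'=t'$, we have $st^{+}t'=st'$, yielding the displayed formula. There is no real obstacle here: the work was already done in Lemma~\ref{LemmaMain}, and the only observation needed is the invariance of both the coefficient $\xi(st^{+},\,\cdot\,)$ and of the product $st^{+}(\,\cdot\,)$ under replacing $t$ by any $t'$ with the same $+$-idempotent.
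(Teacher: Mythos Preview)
Your proof is correct and follows essentially the same route as the paper's: apply Lemma~\ref{LemmaMain} to the pair $(s,t')$, identify $(st^{+},t')$ as the unique possibly contributing pair, and observe that the resulting Möbius sum depends only on $t^{+}={t'}^{+}$, hence equals the nonzero quantity $\xi(st^{+},t)$ obtained from $s\boldsymbol{\ast}t\neq 0$. The paper separates the two directions of the equivalence more explicitly (first assuming $s\boldsymbol{\ast}t'\neq 0$, then assuming $st^{+}\sharp t'\neq 0$), whereas you argue both at once; one small point worth making explicit is that the displayed formula for $\xi(st^{+},t')$ comes from Lemma~\ref{LemmaMain} only under the hypothesis $st^{+}\sharp t'\neq 0$, while in the case $st^{+}\sharp t'=0$ the term simply does not appear and $s\boldsymbol{\ast}t'=0$ directly.
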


\begin{proof}
Let $t'\in S$ with \({t'}^{+} = t^{+}\).

First, suppose that \( s \ast t' \neq 0 \).  
Then, there exist elements \( s'' \ll s \) and \( t'' \ll t' \) such that \( s'' \sharp t'' \neq 0 \) and \( \xi(s'', t'') \neq 0 \).  
By Lemma~\ref{LemmaMain}, we know that \( s'' = s t'^{+} = s t^{+} \) and \( t'' = t' \).  
This implies that \( st^{+} \sharp t' \neq 0 \).

Now, suppose that \(s{t}^{+} \sharp t' \neq 0\). 

As \( s \ast t \neq 0 \) and \( s^{\ast} \neq t^{+} \), it follows from Lemma~\ref{LemmaMain} that \( st^{+} \sharp t \neq 0 \) and  
\[
\xi(st^{+},t) = \sum\limits_{\substack{st^{+}\ll s'\ll s,\\ t^{+} \leq (s^{+}s')^{\ast}}}\mu_S(s', s) \neq 0.
\]
for the function \( \xi\colon S^{(s)}\times S^{(t)}\rightarrow \mathbb{C} \).  

Similarly, for the function \( \xi'\colon S^{(s)}\times S^{(t')}\rightarrow \mathbb{C} \), which is defined analogously to \( \xi \), we obtain  
\[
\xi'(st^{+},t') = \sum\limits_{\substack{st^{+}\ll s'\ll s,\\ t^{+} \leq (s^{+}s')^{\ast}}}\mu_S(s', s) \neq 0.
\]
Therefore, by Lemma~\ref{LemmaMain}, we conclude that 
\[s \ast t' 
=
\xi'(st^{+},t')st'
=
\big(\sum\limits_{\substack{st^{+}\ll s'\ll s,\\ t^{+} \leq (s^{+}s')^{\ast}}}\mu_S(s', s)\big)st'
\neq 0.\]
\end{proof}

The following lemma holds with a similar proof to Lemma~\ref{LemmaMain}.

\begin{lem}\label{LemmaMain3}
Suppose that \(S\) is $\ll$-smooth.
Let \(s,t\in S\), %with \(s^{\ast} \neq t^{+}\). % and \((st^{+})^{\ast} = t^{+}\).
\(s'' \in S^{(st^{+})}\) and \(t'' \in S^{(t)}\). %with \(s'' \sharp t'' \neq 0\).
We have \(\xi(s'',t'') \neq 0\), if and only if \(s'' = st^{+}\), \(t''= t\) and \(st^{+} \sharp t \neq 0\).
Furthermore, if \(st^{+} \sharp t \neq 0\), then we have \(st^{+} \ast t =st\). %Additionally, \(st^{+} \sharp t \neq 0\), if and only if \(st^{+} \ast t \neq 0\).
\end{lem}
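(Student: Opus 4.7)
The plan is to transcribe the proof of Lemma~\ref{LemmaMain} to the reduced interval lying below $st^+$ rather than below $s$. First, I would unfold the definition of $\xi\colon S^{(st^+)}\times S^{(t)}\to\mathbb{C}$ and, using Definition~\ref{ConditionsnEq7}.(1) (valid whenever $s''\sharp t''\neq 0$), replace each $\varphi^{({s''}^+s',t'{t''}^\ast)}$ by the one-step $\varphi({s''}^+s',t'{t''}^\ast)$. If $s''\sharp t''=0$ then no term in the defining sum is nonzero and $\xi(s'',t'')=0$ automatically; otherwise the analysis splits into three cases.

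Case $t''\neq t$ (with $s''\sharp t''\neq 0$): for each fixed $s'$ with $s''\ll s'\ll st^+$, Lemma~\ref{ConditionsnEq8}.(1) shows that the predicate $\varphi({s''}^+s',t'{t''}^\ast)={s''}^\ast$ is independent of $t'$ on $t''\ll t'\ll t$. Hence the inner sum is either $\sum_{t''\ll t'\ll t}\mu_S(t',t)=0$ or empty, and $\xi(s'',t'')=0$.

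Case $t''=t$ and $s''\neq st^+$: the formula collapses to $\xi(s'',t)=\sum_{s''\ll s'\ll st^+,\ \varphi({s''}^+s',t)={s''}^\ast}\mu_S(s',st^+)$. Following Lemma~\ref{LemmaMain}, let $x_1,\ldots,x_n$ enumerate the minimal elements with $s''\ll x_i\ll st^+$ and $\varphi({s''}^+x_i,t)\neq{s''}^\ast$; Lemma~\ref{ConditionsnEq8}.(2) ensures that any $u$ with $x_i\ll u\ll st^+$ inherits $\varphi({s''}^+u,t)\neq{s''}^\ast$, so the complementary region is captured by $\bigcup_i X_{x_i}$ with $X_{x_i}=\{x:s''\ll x\ll st^+,\ x_i\ll x\}$. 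A minimality argument (using ${s''}^+({s''}^+x_i)={s''}^+x_i$) forces ${s''}^+x_i=x_i$, so each $x_i\ll{s''}^+(st^+)\ll st^+$, and induction on $|\bigcup_i X_{x_i}|$ combined with inclusion-exclusion gives $\sum_{x\in\bigcup_i X_{x_i}}\mu_S(x,st^+)=0$ unless $st^+={s''}^+(st^+)$. In that exceptional situation $s''={s''}^+(st^+){s''}^\ast=(st^+){s''}^\ast=(st^+)t^+=st^+$, using ${s''}^\ast=t^+$ from $s''\sharp t\neq 0$, contradicting $s''\neq st^+$. Therefore $\xi(s'',t)=0$.

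Remaining case $s''=st^+$, $t''=t$: the outer interval collapses to the single point $\{st^+\}$, so $\xi(st^+,t)$ is the indicator of the condition $st^+\sharp^{\varphi(st^+,t)}t\neq 0$. Under $(st^+)^\ast=t^+$ (which is forced by $st^+\sharp t\neq 0$) one computes $\varphi(st^+,t)=(t^+t)^+=t^+=(st^+)^\ast$, so this indicator coincides with $[st^+\sharp t\neq 0]$; when nonzero its value is $\mu_S(st^+,st^+)=1$. This establishes the equivalence, and the ``furthermore'' part is then immediate: the only nonzero contribution to $st^+\ast t$ comes from the pair $(st^+,t)$, yielding $st^+\ast t=1\cdot(st^+\cdot t)=st$. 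The main technical obstacle I anticipate is carrying the inclusion-exclusion induction of Lemma~\ref{LemmaMain} verbatim into the shorter interval below $st^+$; however, since the structural hypotheses (singleton-richness, $\ll$-smoothness) are inherited, this step is essentially mechanical.
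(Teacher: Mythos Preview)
Your proposal is correct and follows exactly the route the paper indicates: the paper's proof of Lemma~\ref{LemmaMain3} is simply the sentence ``The following lemma holds with a similar proof to Lemma~\ref{LemmaMain},'' and you have carried out precisely that adaptation, replacing the interval $[s'',s]$ by $[s'',st^+]$ throughout. One small point you leave implicit (and which the paper's proof of Lemma~\ref{LemmaMain} does state explicitly) is the sub-case in which no minimal element $x_i$ exists, i.e., $\varphi({s''}^+s',t)={s''}^\ast$ for every $s''\ll s'\ll st^+$; there the full M\"obius sum over the proper interval vanishes directly since $s''\neq st^+$.
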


By by Lemmas~\ref{LemmaMain2} and \ref{LemmaMain3}, the following corollary holds.

\begin{cor}\label{LemmaMain4}
Suppose that \(S\) is $\ll$-smooth.
Let \(s,t\in S\) with \(s^{\ast} \neq t^{+}\) and \(s\ast t \neq 0\).
Then, \(st^{+}\ast t'\neq 0\) if and only if \(s\ast t'\neq 0\), for every \(t' \in S\) with \({t'}^{+} = t^{+}\).
Furthermore, if \(st^{+}\ast t'\neq 0\), then we have 
\[s\ast t'=\big(\sum\limits_{\substack{st^{+}\ll s'\ll s,\\ t^{+} \leq (s^{+}s')^{\ast}}}\mu_S(s', s)\big)st^{+}\ast t',\] 
for every \(t' \in S\) with \({t'}^{+} = t^{+}\).
\end{cor}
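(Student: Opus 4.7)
The plan is to derive the corollary as a direct combination of Lemmas~\ref{LemmaMain2} and~\ref{LemmaMain3}, each applied once with a suitable identification of variables. Throughout, I would treat $t'$ (subject to ${t'}^{+} = t^{+}$) as a free parameter and use the identity $st^{+} = s{t'}^{+}$ to transport conclusions between the two lemmas.

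First, I would invoke Lemma~\ref{LemmaMain2} with the given $s, t$ and the arbitrary $t'$ with ${t'}^{+} = t^{+}$. The assumed hypotheses $s^{\ast} \neq t^{+}$ and $s \ast t \neq 0$ are exactly what that lemma requires, so it yields the equivalence
\[s \ast t' \neq 0 \iff st^{+} \sharp t' \neq 0,\]
together with the closed form $s \ast t' = \big(\sum\limits_{\substack{st^{+}\ll s'\ll s,\\ t^{+} \leq (s^{+}s')^{\ast}}} \mu_S(s', s)\big)\, st'$ whenever the product is nonzero.

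Next, I would apply Lemma~\ref{LemmaMain3} with the pair $(s, t')$ in place of $(s, t)$ in its statement. Since ${t'}^{+} = t^{+}$, we have $s{t'}^{+} = st^{+}$, and the lemma provides
\[st^{+} \ast t' \neq 0 \iff st^{+} \sharp t' \neq 0,\]
and, in this case, the identity $st^{+} \ast t' = st'$. Chaining the two equivalences through their common side condition $st^{+} \sharp t' \neq 0$ gives the first claim $s \ast t' \neq 0 \iff st^{+} \ast t' \neq 0$.

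Finally, for the displayed formula, I would substitute $st' = st^{+} \ast t'$ from Lemma~\ref{LemmaMain3} into the closed form supplied by Lemma~\ref{LemmaMain2}, obtaining the stated expression. No serious obstacle is anticipated: the argument is essentially bookkeeping, aligning the hypotheses of the two lemmas under the identification ${t'}^{+} = t^{+}$ and performing one substitution.
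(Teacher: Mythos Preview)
Your argument is correct and matches the paper's intended route: the paper records Corollary~\ref{LemmaMain4} as an immediate consequence of Corollary~\ref{LemmaMain2} and Lemma~\ref{LemmaMain3}, and you have simply unpacked that deduction---applying Corollary~\ref{LemmaMain2} to obtain the equivalence $s\ast t'\neq 0 \Leftrightarrow st^{+}\sharp t'\neq 0$ and the scalar formula, applying Lemma~\ref{LemmaMain3} with $(s,t')$ to obtain $st^{+}\ast t'\neq 0 \Leftrightarrow st^{+}\sharp t'\neq 0$ and $st^{+}\ast t'=st'$, and then substituting. No issues.
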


%\begin{proof}
%Since \(s\ast t \neq 0\), we have \(st^{+}\sharp t \neq 0\).
%Hence, by Lemma~\ref{LemmaMain3}, we have \(st^{+}\ast t = st\).
%
%Let \(t' \in S\) with \({t'}^{+} = t^{+}\).
%Then, by Lemma~\ref{LemmaMain2}, we have, \(s \ast t' \neq 0\) if and only if \(st^{+} \sharp t' \neq 0\).
%Again, by Lemma~\ref{LemmaMain3}, we have, \(st^{+} \ast t' \neq 0\) if and only if \(st^{+} \sharp t' \neq 0\).
%
%The result follows.
%\end{proof}

Let $X_e=\{x_s\in X\mid s\in \widetilde{L}_e\widetilde{R}_e\}$.
Let $\theta_e(X_e)$ be the determinant of the submatrix $\widetilde{L}_e\times\widetilde{R}_e$ of the Cayley table $(S,\boldsymbol{*})$, for every idempotent $e\in S$. Let $M$ be a matrix that by rearranging and shifting the rows and columns of $C(X)$ over $(S,\boldsymbol{*})$ so that the elements of the subset $\widetilde{L}_e$ being adjacent rows and the elements of the subset $\widetilde{R}_{e}$ being adjacent columns for every idempotent $e\in E(S)$.
Let \(r_1,\ldots,r_{\abs{S}}\) and \(c_1,\ldots,c_{\abs{S}}\) denote elements of \(S\) corresponding to the rows and columns of the matrix $M$, respectively. Define $R=(r_1,\ldots,r_{\abs{S}})$ and $C=(c_1,\ldots,c_{\abs{S}})$ as the tuples of rows and columns of $M$.
Additionally, we define a matrix $M'$ as follows:
\begin{equation*}
[M'_{r_i,c_j}]  = \begin{cases}
  [M_{r_i,c_j}]& \text{if}\ r_i\in \widetilde{L}_e\ \text{and}\ c_j\in\widetilde{R}_{e}\ \text{for some idempotent}\ e;\\
  0& \text{otherwise},
\end{cases}
\end{equation*} for every \(1\leq i,j\leq \abs{S}\).

Let \(\mathbb{A}\) be the set of matrices of size \(\abs{S}\times\abs{S}\) whose rows and columns correspond to the tuples \(R\) and \(C\), respectively, with elements from the polynomial ring $\mathbb{C}[\widetilde{X}_S]$.
Let $A \in \mathbb{A}$.
We denote by \(A(r)\) the \(r\)-th row of the matrix \(A\).
We define a function \(\tau_A\colon R\rightarrow \mathcal{P}(R)\), for the matrix \(A\), as follows:
\[\tau_A(r)=\{rc^{+}\mid c\in C, r^{\ast} \neq c^{+}\ \text{and}\ [A_{r,c}]\neq 0\},\]
for every \(r\in R\), and let 
\[{R'}_A=\{r\in R\mid \tau_A(r)=\emptyset\}\ \text{and} \ {R''}_A=\{r\in R\mid \tau_A(r)\subseteq {R'}_A\}.\] 
%If \(r\in {R'}_A\) and \(r^{\ast} \neq c^{+}\), for some $c\in C_A$, then we have \([A_{r,c}] = 0\).
Also, we define \[\tau_A^{(n)}(r)=\bigcup_{r'\in\tau_A^{(n-1)}(r)}\tau_A(r'),\]
for every integer $n>1$ with \(\tau_A^{(1)} = \tau_A\).

%We define a function \(\nu_A\colon R\times R\rightarrow \mathbb{N}\) as follows:
%\begin{equation*}
%\nu_A(r_1,r_2)  = 
%\begin{cases}
%  1 & \text{if there exists a column}\ c\ \text{such that}\  [A_{r_1,c}]\neq 0\ \text{and}\ {r_2}^{\ast} = c^{+};\\
%  0 & \text{otherwise},
%\end{cases}
%\end{equation*}
%for every \(r_1,r_2\in R\). 
Additionally, we define a function \(\eta\colon \mathbb{A} \rightarrow \mathbb{A}\) as follows:\\
for each row \( r \) of \(A\):
\[\eta(A)(r) = A(r) - \sum\limits_{rc^{+}\in R'_A\bigcap \tau_A(r)} \big(\sum\limits_{\substack{rc^{+}\ll r'\ll r,\\ c^{+} \leq (r^{+}r')^{\ast}}}\mu_S(r', r)\big)A(rc^{+}),\]
for every \(A \in \mathbb{A}\). 

Let \(A \in \mathbb{A}\). 
We say that \(A\) is \(\tau\)-terminate, if for every \(r\in R\) and an integer \(n>0\), \(r\not\in\tau_A^{(n)}(r)\).
In this case, there exists an integer \(n_r\) such that \(\tau_A^{(n_r)}(r)=\emptyset\).
Otherwise, there exists a sequences \(c_1, c_2,\ldots\) of elements of \(S\) such that \(rc_1^{+}\cdots c_{i-1}^{+}c_i^{+} \in\tau_A(rc_1^{+}\cdots c_{i-1}^{+})\), for every $i>1$.
As $S$ is finite, there exist integers \(i<j\) such that \(rc_1^{+}\cdots c_i^{+}=rc_1^{+}\cdots c_i^{+}\cdots c_j^{+}\) and, thus, we have 
\[rc_1^{+}\cdots c_i^{+}=rc_1^{+}\cdots c_j^{+} \in \tau_A^{(j-i)}(rc_1^{+}\cdots c_i^{+}),\] a contradiction.

When \(A\) is \(\tau\)-terminate, the subset \({R'}_A\) is nonempty. Additionally, only one of the following conditions hold:
\begin{enumerate}
\item \(R'_A = R\);
\item \(R''_A \neq \emptyset\).
\end{enumerate}
%and we can define the function \(\kappa_A\colon R\rightarrow \mathbb{N}\) on the tuple of rows of $A$ as follows:
%\begin{equation*}
%\kappa_A(r)  = \begin{cases}
%  k & \text{if, there exist colums}\ c_1, \ldots, c_k\ \text{such that}\ rc_1^{+} \in\tau_A(r),\\ 
%    & rc_1^{+}\cdots c_{i-1}^{+}c_i^{+} \in\tau_A(rc_1^{+}\cdots c_{i-1}^{+}),\ \text{for every}\ 1< i\leq k,\\ 
%    & \text{and}\ \tau_A(rc_1^{+}\cdots c_{k}^{+})=\emptyset\ \text{with no such elements for}\ k'\ \text{more than}\ k;\\
%  0 & \text{otherwise},
%\end{cases}
%\end{equation*}
%for every \(r \in R\). Let \(n_A=\max(\{\kappa_A(r)\mid r\in R\})\).

\begin{lem}
Suppose that \(S\) is $\ll$-smooth.
We have \(\DDet M= \DDet M'\).
\end{lem}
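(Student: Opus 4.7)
The plan is to exhibit $M'$ as the outcome of a finite sequence of determinant-preserving row operations on $M$, organized by iterating the operator $\eta$. Set $A_0 = M$ and $A_{k+1} = \eta(A_k)$; the goal is to show $\DDet A_k = \DDet A_{k+1}$ at each step and that $A_n = M'$ for some $n$.

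First I would verify that the row operations are valid. If $rc^+ \in \tau_A(r)$, then $rc^+ \ll r$ by Lemma~\ref{eslls}(1), and the ``moreover'' clause of Lemma~\ref{LemmaMain} gives $r^* \not\leq c^+$, hence $rc^+ \neq r$. Thus $\eta(A)(r)$ is obtained from $A(r)$ by subtracting multiples of other rows, which preserves the determinant. The same strict inequality $rc^+ \lll r$ descends in the antisymmetric order $\lll$, so $M$ is $\tau$-terminate.

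Next I would prove that $R'_{A_{k+1}} \supseteq R'_{A_k} \cup R''_{A_k}$, so $R'_{A_k}$ strictly grows until it equals $R$. Rows in $R'_{A_k}$ are preserved by $\eta$ since their $\tau$-sets are empty. For $r \in R''_{A_k}$ and a column $c'$ with $r^* \neq {c'}^+$, I would show $[\eta(A_k)_{r, c'}] = 0$ by a case analysis. When ${c'}^+$ matches some $c^+$ indexing the sum in $\eta$, Corollary~\ref{LemmaMain4} provides the identity relating $[A_k]_{r, c'}$ to the corresponding subtracted term with exactly the coefficient $\sum\mu_S(r',r)$ appearing in $\eta$, yielding the cancellation. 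When ${c'}^+$ matches no such $c^+$, the entry $[A_k]_{r, c'}$ is already zero (else $r{c'}^+ \in \tau_{A_k}(r)$), and each subtracted row $A_k(rc^+) \in R'_{A_k}$ vanishes at column $c'$ because $(rc^+)^* \leq c^+ \neq {c'}^+$. Block-diagonal entries of row $r$, i.e.\ those with ${c'}^+ = r^*$, are preserved because $(rc^+)^* \leq c^+$ combined with $r^* \not\leq c^+$ forces $(rc^+)^* \neq r^*$, so the subtractions contribute zero there. After at most $\abs{S}$ iterations $R'_{A_n} = R$, giving $A_n = M'$, and hence $\DDet M = \DDet A_n = \DDet M'$.

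The main obstacle is the cancellation analysis in the second step, specifically maintaining the invariant that makes Corollary~\ref{LemmaMain4} applicable to entries of $A_k$ (not just to the original entries of $M$) throughout the iteration, and ensuring that several $rc^+$'s indexing the sum in $\eta$ do not interfere with one another at a common column $c'$. The $\ll$-smoothness hypotheses codified in Definition~\ref{ConditionsnEq7} and Lemma~\ref{ConditionsnEq8} appear to be designed precisely to ensure this structural compatibility, so that successive applications of $\eta$ compose cleanly and do not produce spurious off-block contributions.
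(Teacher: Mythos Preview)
Your proposal follows essentially the same approach as the paper: iterate $\eta$ starting from $M$, use $\tau$-termination together with the containment $R'_{A_k}\cup R''_{A_k}\subseteq R'_{A_{k+1}}$ to reach $M'$ in finitely many steps, and invoke Corollary~\ref{LemmaMain4} for the containment $\tau_{\eta(A)}(r)\subseteq\tau_A(r)$. Your write-up is in fact more explicit than the paper's (which does not spell out the determinant-preservation, the block-diagonal preservation, or the column-wise cancellation), and the ``main obstacle'' you flag is precisely the point the paper handles tersely by citing Corollary~\ref{LemmaMain4}.
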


\begin{proof}
%Consider the function \(\tau_M\) on the rows of the matrix \(M\).
First, we prove that the matrix $M$ is \(\tau\)-terminate.
Let \(r \in R\).
By Lemma~\ref{LemmaMain}, if \(rc^{+}\in \tau_M(r)\), for some $c \in C$, then we have \((rc^{+})^{\ast}=c^{+}\). 
Now, as \(r^{\ast} \neq c^{+}\), we have \(r\not\in\tau_M(r)\). 
Additionally, it follows that if \(rc_1^{+}\ldots c_{i-1}^{+}c_i^{+}\in \tau_M(rc_1^{+}\ldots c_{i-1}^{+})\), 
then we have \(rc_1^{+}\ldots c_{i-1}^{+} c_i^{+}\ll rc_1^{+}\ldots c_{i-1}^{+}\) with \(rc_1^{+}\ldots c_{i-1}^{+}c_i^{+}\neq rc_1^{+}\ldots c_{i-1}^{+}\).
Hence, by Lemma~\ref{lllES}.(2), the matrix \(M\) is \(\tau\)-terminate.
%the function \(\kappa_M\colon R\rightarrow \mathbb{N}\) is well-defined.
%Let \(n_M=\max(\{\kappa_M(r)\mid r\in R\})\).

By Corollary~\ref{LemmaMain4}, 
\(\eta(M)\) is \(\tau\)-terminate, as \(\tau_{\eta(M)}(r) \subseteq \tau_M(r)\), for every \(r \in R\).
Similarly, \(\eta^{(i)}(M)\) is \(\tau\)-terminate, for every \(i>1\).
Also, we have \(R''_{M} \cup R'_{M}\subseteq R'_{\eta(M)}\) and \(R''_{\eta^{(i)}(M)} \cup R'_{\eta^{(i)}(M)}\subseteq R'_{\eta^{(i+1)}(M)}\), for every \(i>1\).
As \(\eta^{(i)}(M)\) is \(\tau\)-terminate, we have \(R'_{\eta^{(i)}(M)}=R\) or \(R''_{\eta^{(i)}(M)} \neq \emptyset\).
Hence, if we proceed the function \(\eta\) on the matrix \(M\), there exists an integer \(n_M\) such that \(R'_{\eta^{(n_M)}(M)}=R\) and, thus,
\(\eta^{(n_M)}(M)=M'\). 
%we can define the function \(\kappa_{\eta^{(i)}(A)}\), for every integer \(i\).
%Also, we get that \(n_{\eta(A)} < n_A\) and \(n_{\eta^{(i+1)}(A)} < n_{\eta^{(i)}(A)}\), if \(\eta^{(i)}(A) \neq M'\).
%Hence, if we proceed the function \(\eta\) on the matrix \(M\), \(n_M\) times, we have \(\eta^{(n_M)}(M)=M'\). 

The result follows.
\end{proof}

Now as the determinant of the matrices \(M\) and \(M'\) are equal, 
by adapting the proof presented in \cite[Theorem 4.12]{Sha-Det} for the matrix \(M'\), we establish the following theorem for the factorization of the determinant of $S$.

\begin{thm}\label{Main-TheoremST2t}
Suppose that \(S\) is $\ll$-smooth.
For $s \in S$, put \[y_s=\sum\limits_{t\ll s}\mu_S(t, s)x_t.\] 
Then, we have
\[
\theta_S(X)
= \pm\prod\limits_{e\in E(S)}\widetilde{\theta}_{e}(Y_e)
\]
where $Y_e=\{y_s\mid s \in \widetilde{L}_e\widetilde{R}_e\}$.
Moreover, the determinant of $S$ is nonzero if and only if $\widetilde{\theta}_{e}(Y_e)\neq 0$, for every idempotent $e$.
\end{thm}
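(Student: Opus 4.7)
The proof will proceed in three stages. First, I would establish the identity $\theta_S(X) = \pm \det M$ under the Möbius substitution $y_s = \sum_{t \ll s} \mu_S(t, s) x_t$, using the algebra isomorphism $Z$ of Theorem~\ref{Zisom}. The substitution encodes a change of basis of $\mathbb{C}S$ whose matrix, in any linear extension of $\ll$, is unitriangular (hence has determinant $\pm 1$), and this basis change intertwines the standard Cayley table $(x_{st})_{s,t}$ of $(\mathbb{C}S,\cdot)$ with the Cayley table of $(\mathbb{C}S,\boldsymbol{*})$ on the $S$-basis, now expressed in the $Y$-variables. After the row/column rearrangement grouping rows by $\widetilde{L}_e$ and columns by $\widetilde{R}_e$ (contributing only a sign), the resulting matrix is exactly $M$.

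Second, I would apply the preceding lemma to replace $\det M$ by $\det M'$. Since $M'$ vanishes outside the blocks indexed by pairs $(\widetilde{L}_e, \widetilde{R}_e)$ for $e \in E(S)$, its determinant factors as
\[
\det M' = \pm \prod_{e \in E(S)} \theta_e(Y_e),
\]
where $\theta_e(Y_e)$ is the determinant of the $e$-th diagonal block in the Möbius-transformed variables $Y_e = \{y_s : s \in \widetilde{L}_e\widetilde{R}_e\}$.

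Third, I would identify each block determinant $\theta_e(Y_e)$ with the contracted semigroup determinant $\widetilde{\theta}_e(Y_e)$ by adapting \cite[Theorem~4.12]{Sha-Det} to the $\ll$-smooth setting. The key inputs are Lemmas~\ref{LemmaMain3} and~\ref{LemmaMain4}, which show that for $s \in \widetilde{L}_e$ and $t \in \widetilde{R}_e$, the $\boldsymbol{*}$-product $s\boldsymbol{*}t$ is nonzero precisely when its entries replicate the multiplication of the contracted semigroup attached to $e$; this lets us view the $e$-block as (a rearrangement of) the Cayley table of that contracted semigroup, whose determinant is by definition $\widetilde{\theta}_e(Y_e)$. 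The nonzero-iff characterization is then immediate, since the $Y_e$ are disjoint across $e$ and the determinant vanishes if and only if one of the factors does.

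The main obstacle is the third step: carrying the \cite{Sha-Det} factorization beyond $\pv{ECom}$. In $\pv{ECom}$ the functions $\varphi^{(s,t)}$ and $\psi^{(s,t)}$ coincide by Lemma~\ref{ephiAndpsiECom}, and the block at $e$ realizes a Brandt-style Cayley table via Lemma~\ref{SiSi1Brandt}; outside $\pv{ECom}$ these functions may diverge (compare Example~\ref{Ex012}), so one has to invoke $\ll$-smoothness (Definition~\ref{ConditionsnEq7}) together with Corollary~\ref{LemmaMain2} to show that the data $\varphi^{({s''}^+ s', t\,{t''}^*)}$ appearing in the $\boldsymbol{*}$-product still restrict cleanly to each $\widetilde{L}_e \times \widetilde{R}_e$ block. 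Verifying that this restriction matches the contracted multiplication at $e$, so that the adapted argument produces $\widetilde{\theta}_e(Y_e)$ rather than some perturbation of it, is the delicate combinatorial core of the proof.
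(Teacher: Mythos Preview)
Your first two stages match the paper's argument exactly: the isomorphism $Z$ (Theorem~\ref{Zisom}) together with the unitriangular M\"obius change of basis gives $\theta_S(X)=\pm\det M$ in the $Y$-variables, the preceding lemma gives $\det M=\det M'$, and block-diagonality of $M'$ yields the product over $e\in E(S)$. This is precisely what ``adapting the proof of \cite[Theorem~4.12]{Sha-Det} for the matrix $M'$'' means here.

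Your third stage, however, is superfluous and rests on a misreading of the notation. In this paper $\widetilde{\theta}_e(Y_e)$ \emph{is} the determinant of the $\widetilde{L}_e\times\widetilde{R}_e$ block of the Cayley table of $(S,\boldsymbol{*})$, evaluated in the variables $Y_e$; the tilde only reflects that the $\boldsymbol{*}$-table has genuine zero entries, in analogy with the contracted determinant $\widetilde{\theta}_S$. There is no separate ``contracted semigroup attached to $e$'' whose Cayley table you must identify with the block, and hence no combinatorial obstacle of the kind you describe. The inputs you cite for stage three---Corollary~\ref{LemmaMain2}, Lemmas~\ref{LemmaMain3} and~\ref{LemmaMain4}, and the $\ll$-smooth hypothesis---are exactly what feed the proof of the preceding lemma (via the operator $\eta$ and $\tau$-termination) to obtain $\det M=\det M'$; once that lemma is in hand, nothing further is needed beyond reading off the block product. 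So your outline is correct through stage two and then over-engineered: drop stage three and the ``main obstacle'' paragraph, and you have the paper's proof.
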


In Theorem~\ref{Main-TheoremST2t}, the sign of \(\prod\limits_{e\in E(S)}\widetilde{\theta}_{e}(Y_e)\) 
is contingent on whether the number of rearrangements and shifts applied to the rows and columns of $C(X)$ in order to construct the matrix \(M\)  is odd or even.
Example~\ref{Exa19} utilizes Theorem~\ref{Main-TheoremST2t} to demonstrate that the determinant of the semigroup $S_9$ is non-zero.

Note that one could define the multiplication $\ast$ using the function $\psi$ instead of $\varphi$ and obtain analogous results for the function $\psi$ as well. 

%=================================================================================================================================================
%=================================================================================================================================================
%=================================================================================================================================================

%We used a C\(\#\) program to verify the following conjecture.
%
%\begin{thm}\label{splusneqstrastneq}
%Let $S \not\in \pv{ECom}$. There exist elements $s$ and $t$ in $S$ such that  $s^{\ast}\neq {t}^{+}$ and $s\boldsymbol{*}t \neq 0$. 
%%int lll48 = DF15b.FindMalElementsOfKSBasedOnZBasedEpsilonSec4Left(n, StBSS, S, ES, OrderES, What00, Cho, 
%%                                                                  DF19.TT(n, StBSS, ES, OrderES, S, What00, iin));
%\end{thm}
%
%\begin{proof}
%Since $S \not\in \pv{ECom}$, there exist idempotents \(e,f\in E(S)\) such that \(ef \neq fe\).
%We define the sequences $e_i, f_i$, for \(0\leq i\) as follows:
%\[e_0 = e, f_0 = f;\]
%\[e_i = ()\]
%\end{proof}
%
%\begin{con}
%Suppose that $S\not \in \pv{ECom}$ and $ef \in E(S)$, for every $e,f\in E(S)$.
%The determinant of $S$ is zero.
%\end{con}

%%%%%%%%%%%%%%%%%%%%%%%%%%%%%%%%%%%%%%%%%%%%%%%%%%%%%%%%%%%%%%%%%%%%%%%%%%%%%%%%%%%%%%%%%%%%%%%%%%%%%%%%%%%%%%%%%%%%%%%%%%%%%%%%%%%%%%%%%%%%%%
%%%%%%%%%%%%%%%%%%%%%%%%%%%%%%%%%%%%%%%%%%%%%%%%%%%%%%%%%%%%%%%%%%%%%%%%%%%%%%%%%%%%%%%%%%%%%%%%%%%%%%%%%%%%%%%%%%%%%%%%%%%%%%%%%%%%%%%%%%%%%%

\section*{Acknowledgments}
The author was partially supported by CMUP, member of LASI, which is financed by national funds through FCT -- Funda\c c\~ao para a Ci\^encia e a Tecnologia, I.P., under the projects with reference UIDB/00144/2020 and UIDP/00144/2020.
The author also acknowledges FCT support through a contract based on the “Lei do
Emprego Científico” (DL 57/2016).

%%%%%%%%%%%%%%%%%%%%%%%%%%%%%%%%%%%%%%%%%%%%%%%%%%%%%%%%%%%%%%%%%%%%%%%%%%%%%%%%%%%%%%%%%%%%%%%%%%%%%%%%%%%%%%%%%%%%%%%%%%%%%%%%%%%%%%%%%%%%%%
%%%%%%%%%%%%%%%%%%%%%%%%%%%%%%%%%%%%%%%%%%%%%%%%%%%%%%%%%%%%%%%%%%%%%%%%%%%%%%%%%%%%%%%%%%%%%%%%%%%%%%%%%%%%%%%%%%%%%%%%%%%%%%%%%%%%%%%%%%%%%%

\section{Appendix A}

In Appendix, we provide examples of the semigroups are not in $\pv{ECom}$ that the paper discusses.
We present detailed information for each semigroup, including its Cayley table and contracted semigroup determinants.
In the examples provided, we exclude the element zero from the rows and columns of the Cayley table.
If the multiplication of two non-zero elements in the Cayley table results in zero, we represent it with a dot.
%Additionally, we note the contracted semigroup determinants.

\begin{example}\label{Ex01}
\ 
%\begin{tabular}{ l }
%$\textbf{\bb{[n= 7]: (1193, 3);}}$\\ 
%The \textbf{\bb{zero}} of the semigroup = $\textbf{\bb{x}}$.
%(\qq{$S_1\not\in \pv{ECom}$}).\\ 
\begin{center}
\begin{tabular}{ c | c c c c c c } 
\textbf{\cc{$S_1$}} & $\cc{y}$ & $\cc{z}$ & $\cc{u}$ & $\cc{t}$ & $\cc{w}$ & $\cc{v}$\\ \hline 
$\cc{y}$  & . & . & . & . & . & $y$ \\ 
$\cc{z}$  & . & . & . & . & $z$  & .\\ 
$\cc{u}$  & . & . & . & $y$  & $u$  & $y$ \\ 
$\cc{t}$  & . & . & $z$  & . & $z$  & $t$ \\ 
$\cc{w}$  & . & $z$  & $z$  & $t$  & $w$  & $t$ \\ 
$\cc{v}$  & $y$  & . & $u$  & $y$  & $u$  & $v$ \\ 
\end{tabular}
\end{center}
$\DDet S_1 =-y^3z^3$.\\
%\end{tabular}\\ \begin{tabular}{ l } 
%$y\ \qq{\mapsto}\ y$,\\ 
%$z\ \qq{\mapsto}\ z$,\\ 
%$u\ \qq{\mapsto}\ y+z+u$,\\ 
%$t\ \qq{\mapsto}\ y+z+t$,\\ 
%$w\ \qq{\mapsto}\ y+u+t+w$,\\ 
%$v\ \qq{\mapsto}\ z+u+t+v$,\\ 
%\end{tabular} 
%\begin{tabular}{ l l } 
%\begin{tabular}{ l } 
%$\cc{y^+_r} \ = \ v,$\\ $\cc{z^+_r} \ = \ w,$\\ $\cc{u^+_r} \ = \ w,$\\ $\cc{t^+_r} \ = \ v,$\\ $\cc{w^+_r} \ = \ w,$\\ $\cc{v^+_r} \ = \ v,$\\ \end{tabular} 
%&\begin{tabular}{ l } 
%$\cc{y^+_l} \ = \ v,$\\ $\cc{z^+_l} \ = \ w,$\\ $\cc{u^+_l} \ = \ v,$\\ $\cc{t^+_l} \ = \ w,$\\ $\cc{w^+_l} \ = \ w,$\\ $\cc{v^+_l} \ = \ v,$\\ \end{tabular} 
%\end{tabular}\\ 
The existence and nonexistence of the following relations illustrate that the relation $\ll$ in $S_1$ is not transitive:\\
 $z \ll u,t \ll w$ and $z \not\ll w$,\\
 $y \ll u,t \ll v$ and $y \not\ll v$.
\end{example}

%====================================================================================
%====================================================================================
%====================================================================================

\begin{example}\label{Ex00}
\ 
%$\textbf{\bb{[n= 5]: (26, 2);}}$\\ 
%The \textbf{\bb{zero}} of the semigroup = $\textbf{\bb{x}}$.
%(\qq{$S_2\not\in \pv{ECom}$}).\\ 
\begin{center}
\begin{tabular}{ c | c c c c } 
\textbf{\cc{$S_2$}} & $\cc{y}$ & $\cc{z}$ & $\cc{u}$ & $\cc{t}$\\ \hline 
$\cc{y}$  & . & . & . & $y$ \\ 
$\cc{z}$  & . & . & $z$  & .\\ 
$\cc{u}$  & $y$  & . & $u$  & .\\ 
$\cc{t}$  & . & $z$  & $z$  & $t$ \\ 
\end{tabular}
\end{center}
$\DDet S_2 =-y^2z^2$.\\
We have $\varphi^{(y, t)} (= t) \not\leq y^{+} (=u)$ and $\psi^{(t, y)} (= u) \not\leq y^{\ast} (=t)$.
\end{example}

%====================================================================================
%====================================================================================
%====================================================================================

\begin{example}\label{Ex012}
\ 
%phistAndset(int n, int[,] SS, string[] S, int[] ES, int OrderES)
%phistAndset2(int n, int[,] SS, string[] S, int[] ES, int OrderES)
Consider the semigroup $S_2$ in Example~\ref{Ex00}.\\
We have $\varphi^{(y,u)} = t, \psi^{(y,u)} = x$ and $\varepsilon^{(y,u)}=(t,u)$.
\end{example}

%====================================================================================
%====================================================================================
%====================================================================================

\begin{example}\label{Ex013}
\ 
%$\textbf{\bb{[n= 7]: (1193, 3);}}$\\ 
Consider the semigroup $S_2$ in Example~\ref{Ex00}.\\
We have \(\varphi^{(y,u)} \neq \varphi^{(yu,u)}\) and \(\psi^{(y,u)} \neq \psi^{(y,tu)}\).  
\end{example}

%====================================================================================
%====================================================================================
%====================================================================================

\begin{example}\label{Ex014}
\ 
%$\textbf{\bb{[n= 6]: (379, 0);}$\\ 
%The \textbf{\bb{zero}} of the semigroup = $\textbf{\bb{x}}$.
%\qq{$S\not\in$ ECom.}\\ 
\begin{center}
\begin{tabular}{ c | c c c c c } 
\textbf{\cc{$S_3$}} & $\cc{y}$ & $\cc{z}$ & $\cc{u}$ & $\cc{t}$ & $\cc{w}$\\ \hline 
$\cc{y}$  & . & . & . & $y$  & $y$ \\ 
$\cc{z}$  & . & . & $z$  & . & $z$ \\ 
$\cc{u}$  & $y$  & . & $u$  & . & $u$ \\ 
$\cc{t}$  & . & $z$  & $z$  & $t$  & $t$ \\ 
$\cc{w}$  & $y$  & $z$  & $u$  & $t$  & $w$ \\ 
\end{tabular}
\end{center}
$\DDet S_3 =y^2z^2(t + u - w - z)$.\\
We have \(w\ll w, z\ll w, \varphi(w^{+}w,uz^{\ast})\neq z^{+} (=t)\) and \(\varphi(w^{+}w,z)= z^{+}\).
\end{example}

\begin{example}\label{Ex017}
\ 
%$\textbf{\bb{[n= 7]: (1193, 3);}}$\\ 
Consider the semigroup $S_2$ in Example~\ref{Ex00}.\\
The relation $\ll$ is transitive and the element $\vv{tu}$ is not idempotent.
\end{example}

%====================================================================================
%====================================================================================
%====================================================================================

\begin{example}\label{Ex018}
\ 
%$\textbf{\bb{[n= 6]: (119, 0);}}$\\ 
%The \textbf{\bb{zero}} of the semigroup = $\textbf{\bb{x}}$.
%\qq{$S_4\not\in$ ECom.}\\ 
\begin{center}
\begin{tabular}{ c | c c c c c } 
\textbf{\cc{$S_4$}} & $\cc{y}$ & $\cc{z}$ & $\cc{u}$ & $\cc{t}$ & $\cc{w}$\\ \hline 
$\cc{y}$  & . & . & . & . & $y$ \\ 
$\cc{z}$  & . & . & . & . & $z$ \\ 
$\cc{u}$  & . & $y$  & . & $u$  & $y$ \\ 
$\cc{t}$  & . & $z$  & . & $t$  & $z$ \\ 
$\cc{w}$  & $y$  & $y$  & $u$  & $u$  & $w$ \\ 
\end{tabular}
\end{center}
$\DDet S_4 =0$.\\
We have \(y\ll u\ll w\) with \(y\not\ll w\), \(\varphi^{(y^{+}w,w)}= w^{+}\) and \(\varphi^{(y^{+}u,w)}\neq w^{+}\).
\end{example}

%====================================================================================
%====================================================================================
%====================================================================================

\begin{example}\label{Exa19}
\ 
%$\textbf{\bb{[n= 7]: (5585, 0);}}$\\ 
%The \textbf{\bb{zero}} of the semigroup = $\textbf{\bb{x}}$.
%\qq{$S\not\in$ ECom.}\\ 
\begin{center}
\begin{tabular}{ c | c c c c c c } 
\textbf{\cc{$S_5$}} & $\cc{y}$ & $\cc{z}$ & $\cc{u}$ & $\cc{t}$ & $\cc{w}$ & $\cc{v}$\\ \hline 
$\cc{y}$  & . & . & . & . & $y$  & $y$ \\ 
$\cc{z}$  & . & . & $z$  & $z$  & . & $z$ \\ 
$\cc{u}$  & $y$  & . & $u$  & $t$  & $y$  & $u$ \\ 
$\cc{t}$  & $y$  & . & $t$  & $u$  & $y$  & $t$ \\ 
$\cc{w}$  & . & $z$  & $z$  & $z$  & $w$  & $w$ \\ 
$\cc{v}$  & $y$  & $z$  & $u$  & $t$  & $w$  & $v$ 
\end{tabular}
\end{center}
$\DDet S_5 =-2y^2z^2(t - u)(u - v + w - y - z)$.\\
The table of the semigroup $Z(S_5)$ 
with the multiplication $\boldsymbol{*}$ is as follows:
\begin{center}
\begin{tabular}{ c | c c c c c c } 
\textbf{\cc{$(Z(S_5),\ast)$}} & $y$, & $z$, & $y+z+u$, & $y+z+t$, & $y+z+w$, & $y+z+u+w+v$\\ \hline 
$y$ & $.$ & $.$ & $.$ & $.$ & $y$ & $y$\\ 
$z$ & $.$ & $.$ & $z$ & $z$ & $.$ & $z$\\ 
$y+z+u$ & $y$ & $.$ & $y+z+u$ & $y+z+t$ & $y$ & $y+z+u$\\ 
$y+z+t$ & $y$ & $.$ & $y+z+t$ & $y+z+u$ & $y$ & $y+z+t$\\ 
$y+z+w$ & $.$ & $z$ & $z$ & $z$ & $y+z+w$ & $y+z+w$\\ 
$y+z+u+w+v$ & $y$ & $z$ & $y+z+u$ & $y+z+t$ & $y+z+w$ & $y+z+u+w+v$
\end{tabular}
\end{center}
By Theorem~\ref{Zisom}, the table on the left side can be used for $Z(S_5)$. 
\begin{center}
\begin{tabular}{ll} 
\begin{tabular}{ c | c c c c c c } 
\textbf{\cc{$(Z(S_5),\ast)$}} & $\cc{y}$ & $\cc{z}$ & $\cc{u}$ & $\cc{t}$ & $\cc{w}$ & $\cc{v}$\\ \hline 
$\cc{y}$ & . & . & . & . & $y$  & .\\ 
$\cc{z}$ & .  & . & $z$  & $z$ & . & .\\ 
$\cc{u}$ & $y$  & .  & $u$  & $t$ & $\qq{-y}$  & .\\ 
$\cc{t}$ & $y$  & .  & $t$  & $u$ & $\qq{-y}$  & .\\ 
$\cc{w}$ & .  & $z$ & $\qq{-z}$  & $\qq{-z}$  & $w$  & .\\ 
$\cc{v}$ & . & . & . & . & . & $v$ 
\end{tabular} &
\begin{tabular}{ c | c c c c c c } 
$M$ & $\cc{y}$ & $\cc{u}$ & $\cc{t}$ & $\cc{z}$ & $\cc{w}$ & $\cc{v}$\\ \hline 
$\cc{y}$ & . & . & . & . & $y$  & .\\ 
$\cc{w}$ & . & $\qq{-z}$  & $\qq{-z}$  & $z$  & $w$  & .\\ 
$\cc{z}$ & . & $z$  & $z$  & . & . & .\\ 
$\cc{u}$ & $y$  & $u$  & $t$  & . & $\qq{-y}$  & .\\ 
$\cc{t}$ & $y$  & $t$  & $u$  & . & $\qq{-y}$  & .\\ 
$\cc{v}$ & . & . & . & . & . & $v$ 
\end{tabular}
\end{tabular}
\end{center}
The table $M$ is on the right side that by rearranging and shifting the rows and columns of $Z(S_5)$ so that the elements of the subset $\widetilde{L}_e$ being adjacent rows and the elements of the subset $\widetilde{R}_{e}$ being adjacent columns for every idempotent $e\in E(S_5)$. 
We have ${R'}_M=\{y,z,v\}$ and ${R''}_M=\{w,u,t\}$. The table $M'=\eta(M)$ is as follows:
\begin{center}
\begin{tabular}{ c | c c c c c c } 
$M'$ & $\cc{y}$ & $\cc{u}$ & $\cc{t}$ & $\cc{z}$ & $\cc{w}$ & $\cc{v}$\\ \hline 
$\cc{y}$ & . & . & . & . & $y$  & .\\ 
$\cc{w}$ & . & .  & .  & $z$  & $w$  & .\\ 
$\cc{z}$ & . & $z$  & $z$  & . & . & .\\ 
$\cc{u}$ & $y$  & $u$  & $t$  & . & .  & .\\ 
$\cc{t}$ & $y$  & $t$  & $u$  & . & .  & .\\ 
$\cc{v}$ & . & . & . & . & . & $v$ 
\end{tabular}
\end{center}
Then, it is easy to compute that the determinant of the matrix $M'$ is non-zero and equal to $-2y^2z^2(t - u)v$. 
Consequently, the determinant of $\widetilde{\theta}_{S_5}(X_{S_5})$ is also non-zero, with a value of $-2y^2z^2(t - u)(u - v + w - y - z)$ where $v$ is substituted by the value $\sum\limits_{v'\ll v}\mu_S(v', v)v'=(u - v + w - y - z)$. 
\end{example}

%%%%%%%%%%%%%%%%%%%%%%%%%%%%%%%%%%%%%%%%%%%%%%%%%%%%%%%%%%%%%%%%%%%%%%%%%%%%%%%%%%%%%%%%%%%%%%%%%%%%%%

\bibliographystyle{plain}
\bibliography{ref-Det}

\end{document}